\newcommand{\abs}[1]{\left\vert#1\right\vert}
\newtheorem{theorem}{Theorem}[section]
\newtheorem{lemma}[theorem]{Lemma}
\newtheorem{proposition}[theorem]{Proposition}
\newtheorem{definition}[theorem]{Definition}
\newtheorem{remark}[theorem]{Remark}
\numberwithin{equation}{section}
\newcommand{\edproof}{ $\hfill {\Box}$}
\newcommand{\R}{\mathbb{{R}}}
\newcommand{\N}{\mathbb{{N}}}
\newcommand{\LL}{\mathcal{{L}}}
\newcommand{\HH}{\mathcal{{H}}}
\author[De Le\'on-Contreras]{Marta De Le\'on-Contreras}
\address{Department of Mathematics and Statistics, University of Reading, RG6 6AX Reading, United Kingdom.}
\email{m.deleoncontreras@reading.ac.uk}
\author[Torrea]{Jos\'e L. Torrea}
\address{Departamento de Matem\'aticas, Facultad de Ciencias, Universidad Aut\'onoma de Madrid, 28049 Madrid, Spain.}
\email{joseluis.torrea@uam.es}
\thanks{First autor was partially supported by grant 
 EPSRC Research Grant EP/S029486/1.  Second autor was  partially supported by grant  PGC2018-099124-B-I00 (MINECO/FEDER)}
\keywords{Semigroups. Fractional laplacian. Lipschitz H\"older Zygmund spaces. H\"older estimates}
\subjclass[2010]{Primary 42B35; Secondary 46N20, 35B65}
\begin{document}
	
	\title{Lipschitz spaces adapted to Schr\"odinger operators and regularity properties.}
	\begin{abstract}

	Consider  the  Schr\"odinger operator $\LL=-\Delta+V$
		in $\R^n, n\ge 3,$  where $V$ is a nonnegative potential satisfying 
		a reverse H\"older condition of the type 
		\begin{equation*}
		\left( \frac{1}{|B|}\int_B V(y)^qdy\right)^{1/q}\le \frac{C}{|B|}\int_B V(y)dy, \,  \text{{ for some }}q>n/2.
		\end{equation*}
			
		 We define 
				 $\Lambda^\alpha_\LL,\,  0<\alpha <2,$  the class of measurable  functions such that 
			$$ \|\rho(\cdot)^{-\alpha}f(\cdot)\|_\infty<\infty \quad  \, \, \text{and}\:\:
			\quad  \sup_{|z|>0}\frac{\|f(\cdot+z)+f(\cdot-z)-2f(\cdot)\|_\infty}{|z|^\alpha}<\infty,
			$$
			where  $\rho$ is the critical radius function associated to $\mathcal{L}$.

					Let $W_y f = e^{-y\LL}f$ be  the heat semigroup of $\LL$. Given  $\alpha >0,$ we denote by $\Lambda_{\alpha/2}^{{W}}$ the set of functions $f$ which satisfy  \begin{equation*} \|\rho(\cdot)^{-\alpha}f(\cdot)\|_\infty<\infty \hbox{ and } \Big\|\partial_y^k{W}_y f \Big\|_{L^\infty(\mathbb{R}^{n})}\leq C_\alpha y^{-k+\alpha/2},\;\: \, {\rm with }\, k=[\alpha/2]+1, y>0.  
				\end{equation*}
					
					We prove that for $0<\alpha \le 2-n/q$,  $\Lambda^\alpha_\LL = \Lambda_{\alpha/2}^{{W}}.$  
					
												As application,	we obtain regularity properties of fractional powers (positive and negative) of the operator $\LL$, Schr\"odinger Riesz  transforms, Bessel potentials and multipliers of Laplace transforms type. The proofs of these results need in an essential way the language of semigroups.

					Parallel results are obtained for the classes defined through
					the Poisson semigroup, $P_yf= e^{-y\sqrt{\LL}}f.$

		\end{abstract}
		
		\maketitle
		
		\section{Introduction} 
		
			Classical Lipschitz spaces on $\R^n$, $\Lambda^\alpha,$ $\alpha>0$, are classes of smooth functions that play an important role in function theory, harmonic analysis and partial differential equations.  For $0< \alpha <1$, they are defined as the set of functions $\varphi$  such that  $ |\varphi(x+z)-\varphi(x)| \le C |z|^\alpha$ $x,z\in\R^n$. If the functions are supposed to be bounded, it is also usual to call them  H\"older functions and denote the class by $C^{0,\alpha}$.  For $k \in \mathbb{N}$ {and} $0 < {\beta} <1,$    $C^{k,{\beta} }$, see \cite{Silvestre},  are defined as the classes  of functions such that the derivatives of order less or equal to $k$ are continuous and bounded, while the derivatives of order $k$ belong to $C^{0,{\beta} }$. Sometimes these classes are denoted by  $C^{k+{\beta} }$, see \cite{Krylov book}. When $\alpha \ge 1$, some definitions of the classes  $\Lambda^\alpha$ can be found in the literature, through finite differences, see \cite{Krantz}, and also through some integral estimates, see \cite{Stein, Taibleson}. Moreover, for  $\alpha \notin  \mathbb{N}$ {such that  $\alpha=k+\beta$}, $k\in\N$, $0<\beta<1$, the classes   $\Lambda^\alpha$ and $C^{{k,\beta} }$ agree.
						
			This paper is doubly motivated by the works of E. Stein and M. Taibleson, \cite{Stein, Taibleson}, Krylov, \cite{Krylov book}, and  Silvestre, \cite{Silvestre}. {On the} one hand, 
			in \cite{Stein, Taibleson} the authors characterized  the classes of  bounded Lipschitz functions $\Lambda^\alpha$ by integral estimates of the  Gauss {semigroup}, $e^{y{\Delta}}$, and the Poisson semigroup, $e^{-y\sqrt{-\Delta}}.$ {On the other hand,} in  \cite{Krylov book}
			and \cite{Silvestre} the authors studied  the boundedness of different operators associated to $\Delta$ in the classes $C^{k,{\beta}}$. 
			Our  aim is to {analyze} the above works in the case of    Schr\"odinger operators $\LL=-\Delta+V$,
		in $\R^n$ with $n\ge 3$,  where $V$ is a nonnegative potential satisfying a  reverse H\"older inequality, see (\ref{reverseHolder}).
		
		Namely, our purposes are the following: 
		\begin{itemize}
		\item  To find the appropriated  point-wise  definition of  Lipschitz  classes in the Schr\"odinger setting for $0< \alpha <2$.  We shall denote this space by $\Lambda^\alpha_\LL$. 
		\item To characterize these classes by using either the heat semigroup, $e^{-y\LL}$, or the Poisson semigroup, $e^{-y\sqrt{\LL}}$. 
		\item To use these  characterizations to prove  H\"older estimates of  negative and positive powers of the operator $\LL$. We shall also show the boundedness of Bessel potentials, Riesz  transforms  and some multiplier operators associated to $\LL$.  We remark that we don't need the point-wise expression of the operators.
		\end{itemize}

		Now we present our definitions and  results.
		
					\begin{definition} Let $0 < \alpha < 2$ and $\rho(x)$    the critical radius {function}, see \eqref{critical}.  We shall denote by 
				 $\Lambda^\alpha_\LL$ the class of measurable  functions such that 
			$$M^\LL_\alpha[f] := \|\rho(\cdot)^{-\alpha}f(\cdot)\|_\infty<\infty \quad  \, \, \text{and}\:\:
			\quad N_\alpha[f]:= \sup_{|z|>0}\frac{\|f(\cdot+z)+f(\cdot-z)-2f(\cdot)\|_\infty}{|z|^\alpha}<\infty. 
			$$	
			We endow this space with the norm $$\|f\|_{\Lambda^\alpha_\LL} := M^\LL_\alpha[f]+ N_\alpha[f].$$

\end{definition}
\noindent We shall see that  $\Lambda^\alpha_\LL$ coincides the space defined in \cite{BonHar}  for $0<\alpha <1,$ \eqref{polabruno}, see Remark \ref{espaciopola}.	
	
\begin{remark} The set of continuous functions  $\varphi$ such that $ |\varphi(x+z)+\varphi(x-z)-2\varphi(x)  | \le C |z|$, $x,z\in\R^n$ was introduced by Zygmund, see \cite{Zygmund}, {and it is usually} called Zygmund's class.
\end{remark}

By ${W}_y = e^{-y{\LL}}$  we will denote the heat  semigroup associated to 	 $\LL.$  From the Feynman-Kac formula, it is well known that 
$${W}_y (x,z) \le (4\pi y)^{-n/2} e^{ -\frac{|x-z|^2}{4y}}.$$
Motivated by this estimate, we shall say that a  function $f$ satisfies a {\bf heat size condition for } $\LL$ if 
 $\displaystyle \int_{\mathbb{R}^n}e^{-\frac{|x|^2}{y}} |f(x)| dx < \infty, $ for every $y>0$,  and  for every $\ell\in \N\cup \{0\},$  and every $x\in \mathbb{R}^n,\,\lim_{y\rightarrow \infty}  \partial_y^\ell{W}_yf(x)  =0.$ When some estimates on the derivatives of the heat semigroup are assumed, the following theorem shows that this {\bf heat size condition} is equivalent to a controlled growth of the function.

	\begin{theorem}\label{tam2} 
						Let {$\alpha>0$} and  $f$ be a measurable function. The following are equvivalent:
						\begin{itemize}
							\item   $f$ satisfies a  {\bf heat size condition for $\LL$} and
					
				\begin{equation}\label{derivada} \Big\|\partial_y^k{W}_y f \Big\|_{L^\infty(\mathbb{R}^{n})}\leq C_\alpha y^{-k+\alpha/2},\;\: \, {\rm with }\,\: k=[\alpha/2]+1, y>0.  
				\end{equation}
				\item $f$ satisfies  $M^\LL_\alpha[f]< \infty$ and \eqref{derivada}.	
 
 	\end{itemize}
					\end{theorem}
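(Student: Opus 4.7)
The derivative estimate \eqref{derivada} appears in both (i) and (ii), so the theorem reduces to showing, under \eqref{derivada}, that the heat size condition for $\LL$ is equivalent to the pointwise bound $M^\LL_\alpha[f]<\infty$. The main tool on both sides is the standard pointwise heat kernel bound for Schr\"odinger semigroups,
\[
 y^\ell\,|\partial_y^\ell W_y(x,z)|\le C_{\ell,N}\, y^{-n/2}\,e^{-c|x-z|^2/y}\Bigl(1+\tfrac{\sqrt{y}}{\rho(x)}\Bigr)^{-N},\qquad N\in\N,
\]
which gives an arbitrary polynomial decay whenever $\sqrt{y}\gg\rho(x)$.

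For the direction $(ii)\Rightarrow(i)$, the polynomial growth of $\rho$ promotes the bound $|f(x)|\le M^\LL_\alpha[f]\,\rho(x)^\alpha$ to $|f(x)|\le C(1+|x|)^{M}$, which gives the Gaussian integrability. Plugging this into $\partial_y^\ell W_y f(x)=\int \partial_y^\ell W_y(x,z)f(z)\,dz$ and applying the kernel estimate with $N$ large, I obtain $|\partial_y^\ell W_y f(x)|\lesssim (1+\sqrt{y}/\rho(x))^{-N}(1+|x|)^{M}\to 0$ as $y\to\infty$, which is the limit part of the heat size condition.

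For the direction $(i)\Rightarrow(ii)$, I would start from the representation formula
\[
 f(x)=\frac{(-1)^k}{(k-1)!}\int_0^\infty s^{k-1}\,\partial_s^k W_s f(x)\,ds,
\]
obtained by integrating by parts $k-1$ times in $\int_0^\infty u'(s)\,ds=-u(0)=-f(x)$, where $u(s)=W_s f(x)$. The boundary terms $s^j\partial_s^j W_s f(x)$ vanish at $s=0$ for $j\ge1$ (from \eqref{derivada}, bootstrapped to $j<k$ by analytic semigroup estimates together with integration in $s$ from $s=1$), and at $s=\infty$ by the heat size condition together with the refinement below. The estimate \eqref{derivada} by itself is not enough: the exponent $-k+\alpha/2$ lies in $[-1,0)$, so $\int s^{-1+\alpha/2}\,ds$ diverges at infinity. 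The resolution is a pointwise refinement obtained by writing $\partial_s^k W_s f=W_{s/2}\bigl((-\LL)^k W_{s/2} f\bigr)$ and applying \eqref{derivada} to $g=(-\LL)^k W_{s/2} f$, getting $\|g\|_\infty\le Cs^{-k+\alpha/2}$; the kernel estimate above (with $\ell=0$) then yields
\[
 |\partial_s^k W_s f(x)|\le C_N\,s^{-k+\alpha/2}\Bigl(1+\tfrac{\sqrt{s}}{\rho(x)}\Bigr)^{-N},\qquad N\in\N.
\]
Substituting this in the representation formula and changing variables $s=\rho(x)^2 t$ gives, for any $N>\alpha$,
\[
 |f(x)|\le C_N\,\rho(x)^\alpha\int_0^\infty t^{-1+\alpha/2}(1+\sqrt{t})^{-N}\,dt\le C\,\rho(x)^\alpha,
\]
which is exactly $M^\LL_\alpha[f]<\infty$.

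The main obstacle is producing the pointwise $\rho$-decay of $\partial_s^k W_s f(x)$ from the \emph{uniform} bound \eqref{derivada}; this is where the structure of the Schr\"odinger heat kernel is essential. Once this refinement is in place, the representation formula simultaneously yields both the convergence of the integral and the size estimate for $f$.
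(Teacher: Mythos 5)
Your argument is correct, and in the harder direction it takes a genuinely different (and cleaner) route than the paper. The easy direction is essentially the paper's: Lemma \ref{sizeheat} is exactly your bound, obtained by writing $|f(z)|\le M^\LL_\alpha[f]\rho(z)^\alpha$ and using Lemma \ref{Shenlemma} to trade $\rho(z)^\alpha$ for $\rho(x)^\alpha(1+|x-z|/\rho(x))^{\alpha\lambda}$, which the Gaussian absorbs. For the converse, the paper (Proposition \ref{tam}) does not use a global representation formula: it writes $|f(x)|\le\sup_{0<y<\rho(x)^2}|W_yf(x)-W_{\rho(x)^2}f(x)|+|W_{\rho(x)^2}f(x)|$ and estimates each piece via Lemma \ref{cambiaryrho}, which is precisely your pointwise refinement in disguise ($\partial_y^\ell W_yf=\partial_v^{\ell-k}W_v(\partial_u^kW_uf)$ at $v=u=y/2$, with the $(1+\sqrt y/\rho(x))^{-N}$ factor converted into powers of $\rho(x)$); because the paper only integrates over the finite range $[y,\rho(x)^2]$, it must treat $\alpha$ even and $\alpha$ not even separately, the even case producing a logarithm that has to be resummed. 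Your identity $f(x)=\tfrac{(-1)^k}{(k-1)!}\int_0^\infty s^{k-1}\partial_s^kW_sf(x)\,ds$, combined with the refined bound $|\partial_s^kW_sf(x)|\le C_Ns^{-k+\alpha/2}(1+\sqrt s/\rho(x))^{-N}$, collapses both cases into one scaling computation, which is a real simplification. Two points you should make explicit to close the argument: the identity needs $W_sf(x)\to f(x)$ a.e.\ as $s\to0$, which is where the Gaussian-integrability half of the heat size condition is actually used (this is the paper's Lemma \ref{claim}); and the vanishing of the boundary terms $s^j\partial_s^jW_sf(x)$ at $s=\infty$ for $1\le j\le k-1$ should be derived by integrating your refined $k$-th order bound down from infinity (giving $|\partial_s^jW_sf(x)|\lesssim s^{-j+\alpha/2}(1+\sqrt s/\rho(x))^{-N+2(k-j)}$), since \eqref{derivada} alone only gives $s^j\partial_s^jW_sf=O(s^{\alpha/2})$, which does not decay at infinity. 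You flag both issues, so these are presentational rather than substantive gaps.
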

This theorem leads  us to the next definition. 
\begin{definition} Let $\alpha >0.$ We shall denote by  $\Lambda_{\alpha/2}^{{W}}$ the set of functions $f$ which satisfy  a  {\bf heat size condition for } $\LL$ and 
\eqref{derivada}.
We endow this space with the norm 
$$\|f\|_{\Lambda^W_{\alpha/2}} := S^W_\alpha[f]+   M^\LL_\alpha[f],$$
being
$S^W_\alpha[f]$  the infimum of the constants $C_\alpha$ appearing in 
					\eqref{derivada}.
			\end{definition}  
	Now we state the announced characterization of the Lipschitz classes by using the  derivatives of the heat semigroup.		
					\begin{theorem}\label{identities} Let $0<\alpha\le 2-\frac{n}{q}.$ Then    
						$$\Lambda^\alpha_\LL= \Lambda_{\alpha/2}^{W},$$
					with equivalence of their norms.
							\end{theorem}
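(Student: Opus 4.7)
The plan is to verify both inclusions, exploiting Theorem~\ref{tam2} to identify $\Lambda^{W}_{\alpha/2}$ with the class of $f$ satisfying $M^\LL_\alpha[f]<\infty$ together with $\|\partial_y W_y f\|_\infty \le C_\alpha y^{-1+\alpha/2}$. Since $\alpha\le 2-n/q<2$, the order $k=[\alpha/2]+1$ is $1$, so only a first-order heat derivative appears; this is what makes the problem tractable in the heat framework.

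For $\Lambda^\alpha_\LL \subset \Lambda^{W}_{\alpha/2}$, the size condition $M^\LL_\alpha[f]<\infty$ is built into both definitions, so only the heat-derivative bound needs to be derived from the Zygmund semi-norm $N_\alpha[f]$. I would decompose the Schrödinger heat kernel as $W_y(x,z)=G_y(x-z)+R_y(x,z)$, where $G_y$ is the classical Gauss--Weierstrass kernel and $R_y$ is the Duhamel perturbation produced by $V$. The Gaussian contribution is handled by the standard symmetrisation
\begin{equation*}
\partial_y (G_y\ast f)(x) = \tfrac{1}{2}\int \partial_y G_y(u)\,[f(x+u)+f(x-u)-2f(x)]\,du,
\end{equation*}
valid because $G_y$ is radial and $\int \partial_y G_y=0$; combined with the Gaussian moment $\int|u|^\alpha|\partial_y G_y(u)|\,du \le C y^{-1+\alpha/2}$ this yields a bound of the form $C\,N_\alpha[f]\, y^{-1+\alpha/2}$. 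The remainder $\partial_y R_y$ is controlled by pairing the Schrödinger-specific heat-kernel estimates (which sharpen the Gaussian bound by additional factors of $\sqrt{y}/\rho$ under the reverse Hölder hypothesis) against the pointwise bound $|f(z)|\le M^\LL_\alpha[f]\,\rho(z)^\alpha$, using that $\rho$ is slowly varying on the effective Gaussian scale.

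For $\Lambda^{W}_{\alpha/2}\subset \Lambda^\alpha_\LL$, only the Zygmund estimate requires work. The heat size condition and the hypothesis $\lim_{y\to\infty}\partial_y^\ell W_y f=0$ yield, via the fundamental theorem of calculus, the telescoping identity $f=-\int_0^\infty \partial_r W_r f\, dr$, hence
\begin{equation*}
f(x+z)+f(x-z)-2f(x) = -\int_0^\infty \Delta_z^2\,\partial_r W_r f(x)\,dr, \qquad \Delta_z^2 g(x):=g(x+z)+g(x-z)-2g(x).
\end{equation*}
I would split at $r=|z|^2$. On $(0,|z|^2)$ the crude bound $|\Delta_z^2 \partial_r W_r f(x)|\le 4\|\partial_r W_r f\|_\infty \le 4Cr^{-1+\alpha/2}$ integrates to $\le C|z|^\alpha$. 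On $(|z|^2,\infty)$ one needs a spatial second-derivative bound: writing $W_r=W_{r/2}W_{r/2}$ gives $\partial_r W_r f = W_{r/2}(\partial_s W_s f|_{s=r/2})$, and combining the $L^\infty$-smoothing $\|\nabla^2 W_{r/2}\|_{\infty\to\infty}\le C r^{-1}$ with the hypothesis on $\partial_s W_s f$ produces $\|\nabla^2 \partial_r W_r f\|_\infty \le Cr^{-2+\alpha/2}$. Taylor's formula then gives $|\Delta_z^2 \partial_r W_r f(x)|\le C|z|^2 r^{-2+\alpha/2}$, and since $\alpha/2<1$ the tail integral converges and contributes another $\le C|z|^\alpha$. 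Summing both regimes yields $N_\alpha[f]\le C\,\|f\|_{\Lambda^{W}_{\alpha/2}}$.

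The main obstacle is the remainder estimate for $\partial_y R_y$ in the first inclusion: this is where the reverse-Hölder hypothesis and the Schrödinger-specific heat-kernel refinements are really used, and it is where the cutoff $\alpha\le 2-n/q$ becomes effective (larger $\alpha$ would force the exponents on $\rho(x)$ and $V$ to become incompatible). The auxiliary second-order smoothing estimate $\|\nabla^2 W_t\|_{\infty\to\infty}\le Ct^{-1}$ used in the reverse inclusion is milder but also relies on Gaussian-type pointwise bounds valid in this class of potentials.
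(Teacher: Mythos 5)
Your first inclusion ($\Lambda^\alpha_\LL \subset \Lambda^W_{\alpha/2}$) follows the paper's route in all essentials: the decomposition $W_y=G_y+R_y$ is the Kato--Trotter perturbation formula, the symmetrisation of $\partial_y G_y$ against the Zygmund difference is exactly the converse half of Theorem~\ref{nuevoStein}, and the control of $\partial_y R_y$ by $M^\LL_\alpha[f]$, Lemma~\ref{estV} and the constraint $\alpha\le 2-n/q$ is the content of Theorem~\ref{comparacion}. You correctly identify this remainder estimate as the hard step, though you only sketch it; the paper spends a full proof on it (splitting the Duhamel integral at $t/2$ and treating the regimes $t\le\rho(x)^2$ and $t\ge\rho(x)^2$ separately).

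The reverse inclusion, however, has a genuine gap. Your tail estimate rests on the smoothing bound $\|\nabla^2 W_{r/2}\|_{\infty\to\infty}\le Cr^{-1}$, i.e.\ on pointwise second-order \emph{spatial} derivative estimates for the Schr\"odinger heat kernel. These are not available for $V$ merely in a reverse H\"older class with $q>n/2$: the kernel $x\mapsto W_t(x,z)$ need not be twice differentiable, and even first-order gradient bounds of Gaussian type require $q\ge n$ (this is the same obstruction that limits the Riesz transform $\nabla\LL^{-1/2}$). Note that $\Delta W_tf=VW_tf+\partial_tW_tf$ already shows that second spatial derivatives drag in the unbounded potential. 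The paper carefully avoids ever differentiating $W_t(x,z)$ in $x$: Lemma~\ref{Cauchy} gives only $y$-derivatives (via the Cauchy integral formula for the holomorphic semigroup), and all spatial-derivative manipulations (Lemma~\ref{derivX}, Proposition~\ref{Steinderivadax}) are performed on the classical kernel $\tilde W$. The fix is to use the comparison estimate you already need for the forward direction: from $f\in\Lambda^W_{\alpha/2}$ deduce $M^\LL_\alpha[f]<\infty$ (Theorem~\ref{tam2}), then Theorem~\ref{comparacion} converts $\|\partial_yW_yf\|_\infty\le Cy^{-1+\alpha/2}$ into the same bound for $\partial_y\tilde W_yf$, and your split-at-$|z|^2$ argument can then be run verbatim for $\tilde W$, where the Hessian bound is a trivial property of the Gauss kernel. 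This is precisely Theorem~\ref{Holdercomparacion} combined with Theorem~\ref{nuevoStein} in the paper.
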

	 	  
 Some observations are in order. The restriction in the range $0< \alpha \le 2-\frac{n}{q}$ is due to the reverse H\"older inequality \eqref{reverseHolder} that satisfies the potencial $V$. If the potential $V$ satisfies \eqref{reverseHolder} for every $q>n/2$, then we get the result for every $0<\alpha<2$. This is the case of the Hermite operator, $\HH=-\Delta+|x|^2$. To prove Theorem \ref{identities}, we compare the spaces $\Lambda_{\alpha/2}^{W}$ with some parallel spaces $\Lambda_{\alpha/2}^{\tilde{W}}$ defined for the classical Laplace operator, see Definition \ref{defclasica}. We believe that these spaces, more general  that the classical  Lipschitz spaces, are of independent interest. The functions don't need to be bounded, however a point-wise characterization is also valid as in the classical case, see Theorem \ref{nuevoStein}.    Once we have this characterization, by using the so called ``perturbation formula'' for Schr\"odinger operators,  we get a comparison between the classes $\Lambda_{\alpha/2}^{\tilde{W}}$ and $\Lambda_{\alpha/2}^{{W}}$, see Theorem \ref{comparacion}.

 	 Lipschitz spaces adapted to the operator $\LL$ have been analyzed by different authors for $0<\alpha<1$. In the paper \cite{BonHar} the authors introduced, for $0<\alpha< 1$, the space of functions which satisfy  
		\begin{equation}\label{polabruno} \|\rho(\cdot)^{-\alpha}f(\cdot)\|_\infty<\infty \quad  \, \, \text{and}\:\:
		\quad \sup_{|z|>0}\frac{\|f(\cdot-z)-f(\cdot)\|_\infty}{|z|^\alpha}<\infty, \end{equation}
		where 	$\rho(x)$  is  the critical radius function associated to the potential $V$, see \eqref{critical}. See also \cite{MSTZ}.
	In the case of the Hermite operator, $\mathcal{H}=-\Delta+|x|^2$, adapted H\"older classes were defined point-wise in \cite{ST2}.    By using the Poisson semigroup some parabolic classes were considered   in   \cite{ST3}. {Finally}, for the Ornstein-Ulhenbeck operator, $\mathcal{O}=-\frac{1}{2}\Delta+x\cdot\nabla$, in \cite{Urbina} some Lipschitz classes were defined by means of its Poisson semigroup, $e^{-y\sqrt{\mathcal{O}}}$, and in \cite{Sjogren} a point-wise characterization was obtained for $0<\alpha<1$.  Our Theorem \ref{identities} contains as particular cases the results in  \cite{BonHar} and \cite{MSTZ}, when $0<\alpha <1$.	 In the case of Hermite operator, the fact that 
	$\mathcal{H} = -\Delta+|x|^2= \sum_i A_iA_{-i}+ A_{-i}A_i$
with 
	 $A_{\pm}i = \pm\partial_{x_i} + x_i $ can be used to define spaces $C^{k,{\beta}}_{\mathcal{H}} = \Lambda^{k+{\beta}}_\mathcal{H} , 
	 k\in \mathbb{N}, 0< {\beta} <1$. We shall develop the theory of {those} spaces in a forthcoming paper.
	 
As we said, our third purpose  is to study the regularity of the following operators in the Lipschitz spaces  {previously defined}. 
	\begin{itemize}
	\item  The {\it Bessel potential of order $\beta >0$}, 
	  $$
	 (Id+\LL)^{-\beta/2}f(x)=\frac{1}{\Gamma(\beta/2)}\int_0^\infty e^{-t}e^{-t\LL}f(x) t^{\beta/2}\frac{dt}{t}.
	  $$
	\item The {\it fractional integral} of order $\beta>0$.
	  $$
	  \LL^{-\beta/2}f(x)=\frac{1}{\Gamma(\beta/2)}\int_0^\infty e^{-t\LL}f(x) t^{\beta/2}\frac{dt}{t}.
	  $$
\item The {\it fractional ``Laplacian''} of order $\beta/2>0$ 
	   $$
	   \LL^{\beta/2}f(x)= \frac1{c_\beta} \int_0^\infty
	   (Id-e^{- t\LL})^{[\beta/2]+1}f(x)\,\frac{dt}{t^{1+\beta/2}}.
	   $$
	   
	   \item The first order Riesz transforms  defined by
	  $$\mathcal{R}_i=\partial_{x_i}(\LL^{-1/2}), \hbox{  and } \,  R_i = \LL^{-1/2}(\partial_{x_i}),  \: i=1,\dots,n. $$   
	  \end{itemize}
	  The following theorems  will be proved in Section \ref{pruebaaplicaciones}.
	  \begin{theorem}\label{Schau}
	Let $\alpha,\beta>0$ and $\mathcal{T}_\beta$ denote the Bessel potential or the fractional integral of order $\beta$. Then, $\mathcal{T}_\beta$  satisfies 
\begin{itemize}
\item[(i)]
		$\|\mathcal{T}_\beta f\|_{\Lambda_{\frac{\alpha+\beta}{2}}^{W}}\le C \|f\|_{	\Lambda_{\alpha/2}^{W}}.$
\item[(ii)] $\|\mathcal{T}_\beta f\|_{\Lambda_{{\beta/2}}^{W}}\le C \|f\|_{\infty}.$		
		\end{itemize}
	  \end{theorem}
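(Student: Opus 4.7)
The plan is to use Theorem \ref{tam2} to reduce membership in $\Lambda^W_{(\alpha+\beta)/2}$ to two conditions: the heat size condition and the derivative bound of order $k=[(\alpha+\beta)/2]+1$. Both operators admit the common subordination representation
$$T_\beta f = \frac{1}{\Gamma(\beta/2)}\int_0^\infty g(t)\, W_t f \,t^{\beta/2-1}\,dt,$$
with $g(t)=e^{-t}$ (Bessel) or $g(t)\equiv 1$ (fractional integral), so both cases can be treated uniformly.

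First, I would upgrade the single derivative bound $\|\partial_y^{k_0} W_y f\|_\infty \le S^W_\alpha[f]\,y^{-k_0+\alpha/2}$ (with $k_0=[\alpha/2]+1$) to all higher orders $k\ge k_0$. Writing $\partial_y^k W_y = (-\LL)^{k-k_0} W_{y/2}\circ \partial_y^{k_0} W_{y/2}$ and invoking the analyticity of the heat semigroup on $L^\infty$, namely $\|(-\LL)^j W_s\|_{L^\infty\to L^\infty}\le C_j\,s^{-j}$, yields $\|\partial_y^k W_y f\|_\infty \le C_k\,S^W_\alpha[f]\,y^{-k+\alpha/2}$ for every $k\ge k_0$.

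Second, commuting $\partial_y^k W_y$ with the subordination integral and using $W_y W_t = W_{y+t}$ gives
$$\partial_y^k W_y T_\beta f = \frac{1}{\Gamma(\beta/2)}\int_0^\infty g(t)\,\partial_y^k W_{y+t} f\,t^{\beta/2-1}\,dt.$$
Plugging in the bound from the previous step at $s=y+t$ and substituting $t=yu$,
$$\|\partial_y^k W_y T_\beta f\|_\infty \le C\,S^W_\alpha[f]\int_0^\infty (y+t)^{-k+\alpha/2}\,t^{\beta/2-1}\,dt = C'\,S^W_\alpha[f]\,y^{-k+(\alpha+\beta)/2},$$
with $k=[(\alpha+\beta)/2]+1$. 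The Beta-like integral converges because $\beta/2>0$ near $0$ and $-k+\alpha/2+\beta/2<0$ at $\infty$, precisely by the sharp choice of $k$. This proves $S^W_{\alpha+\beta}[T_\beta f]\le C\,\|f\|_{\Lambda^W_{\alpha/2}}$.

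Third, I would check the heat size condition for $T_\beta f$. Using positivity and the Gaussian majorant for the kernel of $W_t$ together with the heat size condition on $f$, a standard Gaussian convolution argument produces $\int e^{-|x|^2/y}|T_\beta f(x)|\,dx<\infty$, while $\partial_y^\ell W_y T_\beta f(x)\to 0$ as $y\to\infty$ follows from dominated convergence using the estimates of step two (for Bessel potentials the extra $e^{-t}$ makes this trivial). Theorem \ref{tam2} then delivers $M^\LL_{\alpha+\beta}[T_\beta f]\le C\,\|f\|_{\Lambda^W_{\alpha/2}}$ automatically, completing part (i). For part (ii), the argument is formally identical with $\alpha=0$: starting from $f\in L^\infty$ the bound $\|\partial_y^k W_y f\|_\infty\le C_k\,y^{-k}\,\|f\|_\infty$ comes for free from analyticity, and the same subordination computation yields $\|T_\beta f\|_{\Lambda^W_{\beta/2}}\le C\,\|f\|_\infty$. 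The main obstacle is really the upgrade of derivative orders in step one together with the sharp tracking of exponents in step two; once those are in place, convergence at infinity of the subordination integral (most delicate for the fractional integral, where $g\equiv 1$) is exactly what forces the choice $k=[(\alpha+\beta)/2]+1$.
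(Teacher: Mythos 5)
Your proposal is correct and follows essentially the same route as the paper: the subordination representation, the semigroup property $W_yW_t=W_{y+t}$, the upgrade of the derivative order to $k=[(\alpha+\beta)/2]+1$ (the paper's Proposition \ref{subir el $k$}, proved exactly by your analyticity argument via Lemma \ref{Cauchy}), and the Beta-type integral in $t$. The only cosmetic difference is that the paper verifies the growth condition $M^{\LL}_{\alpha+\beta}[\mathcal{T}_\beta f]\le C M^{\LL}_\alpha[f]$ by a direct computation with Lemma \ref{sizeheat} (Lemma \ref{Potencial}) rather than routing it through Theorem \ref{tam2}; note that your step three still needs Lemma \ref{sizeheat} (hence $M^{\LL}_\alpha[f]<\infty$) to make $\LL^{-\beta/2}f$ itself convergent at $t=\infty$ and to check the heat size condition, so nothing is saved by the detour.
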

	
	  	  \begin{theorem}[H\"older estimates]\label{Holderestimates}
	  	Let $0< \beta < \alpha $ and  $f\in	\Lambda_{\alpha/2}^{W}.$  Then, 
	  $$	\|\mathcal{L}^{\beta/2} f\|_{\Lambda_{{\frac{\alpha-\beta}{2}}}^{W}}\le C \|f\|_{\Lambda_{\alpha/2}^{W}}.   
	  	$$
	  \end{theorem}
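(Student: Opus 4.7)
The plan is to verify that $g := \LL^{\beta/2} f$ belongs to $\Lambda^{W}_{(\alpha-\beta)/2}$, namely the derivative estimate
$\|\partial_y^{k'} W_y g\|_{L^\infty}\le C y^{-k' + (\alpha-\beta)/2}$ with $k' = [(\alpha-\beta)/2]+1$, together with either the size bound $M^{\LL}_{\alpha-\beta}[g]<\infty$ or the heat-size condition (Theorem \ref{tam2} exchanges one for the other). The structural point is that $\LL^{\beta/2}$ commutes with $W_y$ on the spectral side, so that derivatives in $y$ can be exchanged with the fractional power.

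Using this commutation together with the pointwise formula defining $\LL^{\beta/2}$, I would write
\begin{equation*}
\partial_y^{k'} W_y \LL^{\beta/2} f(x) = \frac{1}{c_\beta}\int_0^\infty (\Id-W_t)^m\, \partial_y^{k'} W_y f(x)\, \frac{dt}{t^{1+\beta/2}}, \qquad m=[\beta/2]+1.
\end{equation*}
Expanding $(\Id-W_t)^m=\sum_{j=0}^m (-1)^j \binom{m}{j} W_{jt}$ and using $W_{jt}\partial_y^{k'} W_y f(x) = \partial_s^{k'} W_s f(x)|_{s=y+jt}$, the integrand equals $(-1)^m \Delta^m_t \varphi_x(y)$, where $\varphi_x(s) := \partial_s^{k'} W_s f(x)$ and $\Delta_t$ denotes the forward difference of step $t$. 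A preliminary remark, proved by the semigroup property, is that the single bound $\|\partial_y^{k} W_y f\|_{L^\infty}\le C y^{-k+\alpha/2}$ with $k=[\alpha/2]+1$ extends to $\|\partial_y^{j} W_y f\|_{L^\infty}\le C y^{-j+\alpha/2}$ for every $j\ge k$; since $k'+m\ge k$ (from $[(\alpha-\beta)/2]+[\beta/2]\ge [\alpha/2]-1$), this gives pointwise control of $\varphi_x^{(m)}$, and in the range $0<\alpha<2$ where $k=k'=1$, also of $\varphi_x$ itself.

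I then split the $t$-integral at $t=y$. For $t\le y$, writing $\Delta^m_t \varphi_x(y)$ as the iterated integral of $\varphi_x^{(m)}$ gives
\begin{equation*}
|\Delta^m_t \varphi_x(y)| \le t^m \sup_{s\in[y,y+mt]}|\varphi_x^{(m)}(s)| \le C\, t^m\, y^{-(k'+m)+\alpha/2},
\end{equation*}
and integration against $t^{-1-\beta/2}\,dt$ over $(0,y)$ produces $C\, y^{-k' + (\alpha-\beta)/2}$ (since $m>\beta/2$). For $t>y$, the crude estimate $|\Delta^m_t \varphi_x(y)| \le 2^m \sup_{s\ge y} |\varphi_x(s)| \le C y^{-k'+\alpha/2}$ integrated against $t^{-1-\beta/2}\,dt$ over $(y,\infty)$ produces the same majorant. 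Adding the two contributions yields the required derivative bound.

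The pointwise bound $|\LL^{\beta/2}f(x)|\le C\rho(x)^{\alpha-\beta}$---or, by Theorem \ref{tam2}, equivalently the heat-size condition for $g$---is obtained by an analogous splitting applied directly to the integral defining $\LL^{\beta/2}f(x)$, estimating $(\Id-W_t)^m f(x)$ via $t$-derivatives of $W_s f(x)$ for small $t$ and via the pointwise heat bound for large $t$. I expect the main obstacle to be the large-$t$ regime of the difference estimate: the crude bound there uses $k'\ge k$, which holds automatically in the range $0<\alpha<2$ where the paper's Lipschitz characterization applies, but for general $\alpha>0$ it requires factoring $\Delta^m_t = \Delta^{m_1}_t\Delta^{m_2}_t$ with $m_2$ chosen so that both $k'+m_2\ge k$ and $m_2<\beta/2$ hold---a careful bookkeeping of exponents rather than a genuinely new idea. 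All estimates being linear in $\|f\|_{\Lambda^{W}_{\alpha/2}}$, the claimed norm inequality follows.
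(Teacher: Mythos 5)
Your strategy is essentially the paper's: write $(Id-W_t)^{m}$, $m=[\beta/2]+1$, as an $m$-fold difference (equivalently an iterated integral of $\partial_s W_sf$), commute it with $\partial_y^{k'}W_y$, and split the $t$-integral at $t=y$; your small-$t$ estimate and the arithmetic check $k'+m\ge k$ are exactly what the paper does. The gap is in the regime $t>y$. The crude bound $|\Delta_t^m\varphi_x(y)|\le 2^m\sup_{s\ge y}|\varphi_x(s)|\le C\,y^{-k'+\alpha/2}$ presupposes $\|\partial_s^{k'}W_sf\|_\infty\le Cs^{-k'+\alpha/2}$, which is available only when $k'\ge k=[\alpha/2]+1$; when $k'<k$ the only bound on $\partial_s^{k'}W_sf$ provided by Lemma \ref{cambiaryrho} is $C\rho(x)^{2(k-k')}s^{-k+\alpha/2}$, which gives neither the right power of $y$ nor a $\rho$-free constant. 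You anticipate this and propose factoring $\Delta_t^m=\Delta_t^{m_1}\Delta_t^{m_2}$ with an integer $m_2$ satisfying both $k'+m_2\ge k$ and $m_2<\beta/2$; such an $m_2$ need not exist. For instance, $\alpha=2.5$, $\beta=0.6$ gives $k=2$, $k'=1$, so one needs $m_2\ge k-k'=1$ and $m_2<\beta/2=0.3$ simultaneously, which is impossible. Hence the argument as written closes only for $0<\alpha<2$ (where $k'=k=1$), not for the full range claimed in the theorem.

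The paper's way around this is to keep the iterated-integral form and integrate the pointwise majorant $|\varphi_x^{(m)}(u)|\le C\,S^W_\alpha[f]\,u^{-(k'+m)+\alpha/2}$ explicitly over the cube $[0,t]^{m}$. This produces a finite sum $\sum_{j=0}^{m}C_j\,(y+jt)^{-k'+\alpha/2}$, and for $t>y$ each term is handled via $y\le y+jt\le (1+m)t$ together with a discussion of the sign of $k'-\alpha/2$: if $k'>\alpha/2$ one bounds $(y+jt)^{-k'+\alpha/2}$ by $y^{-k'+\alpha/2}$, otherwise by $C\,t^{-k'+\alpha/2}$, and in either case $\int_y^\infty(\cdots)\,t^{-1-\beta/2}dt\le C\,y^{-k'+(\alpha-\beta)/2}$ because $k'>(\alpha-\beta)/2$. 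In other words, the cancellation in $\Delta_t^m$ must be exploited through the exact evaluation points $y+jt$, not through a uniform supremum over $s\ge y$. Separately, note that membership in $\Lambda^W_{(\alpha-\beta)/2}$ also requires the size bound $M^{\LL}_{\alpha-\beta}[\LL^{\beta/2}f]<\infty$; the paper devotes a lemma to this (splitting at $t=\rho(x)^2$, with a case analysis for $\alpha$ even and for $[\beta/2]+1$ versus $\alpha/2$), and your one-line sketch of that step would need the same explicit integration rather than a sup bound, for the same reason as above.
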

	  
	  	  \begin{theorem} \textcolor{white}{} \label{TRiesz}
	  	\begin{itemize}
		\item For  $1<\alpha\le 2-n/q$,  then $\|{R}_if\|_{\Lambda_{\alpha/2}^W} \le C \|f\|_{\Lambda_{\alpha/2}^W} $,  $i=1,\dots,n$.
	  		\item For $0<\alpha\le 1-n/q$,  then $\|\mathcal{R}_if\|_{\Lambda_{\alpha/2}^W} \le C \|f\|_{\Lambda_{\alpha/2}^W} $,  $i=1,\dots,n$.
	  		\end{itemize}	
	  	
	  \end{theorem}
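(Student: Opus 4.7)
The plan is, for each $f\in\Lambda^W_{\alpha/2}$ and $T\in\{R_i,\mathcal{R}_i\}$ within the stated $\alpha$-ranges, to verify that $Tf$ satisfies both defining conditions of $\Lambda^W_{\alpha/2}$: the heat size condition and the semigroup derivative estimate \eqref{derivada} with $k=[\alpha/2]+1$. By Theorem \ref{tam2}, these two conditions jointly yield the growth bound $M^\LL_\alpha[Tf]<\infty$ for free, so the work concentrates on \eqref{derivada}.

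The starting point is the heat-semigroup representation $\LL^{-1/2}=\pi^{-1/2}\int_0^\infty W_t\,t^{-1/2}dt$, combined with the commutativity $W_y\LL^{-1/2}=\LL^{-1/2}W_y$, which produces
\[
W_y(R_i f)=\frac{1}{\sqrt{\pi}}\int_0^\infty W_{y+t}(\partial_{x_i}f)\,\frac{dt}{\sqrt{t}},\qquad W_y(\mathcal{R}_i f)=\frac{1}{\sqrt{\pi}}\int_0^\infty W_y\,\partial_{x_i}W_tf\,\frac{dt}{\sqrt{t}}.
\]
For $R_i$, integration by parts against $f$ rewrites $W_{y+t}(\partial_{x_i}f)(x)=-\int\partial_{z_i}W_{y+t}(x,z)f(z)\,dz$, moving the derivative off $f$ and onto the heat kernel. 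For $\mathcal{R}_i$, the outer $W_y$ does not commute with $\partial_{x_i}$, so I would split $W_t=W_{t/2}W_{t/2}$ and isolate the composite operator $W_y\partial_{x_i}W_{t/2}$, which then admits a Gaussian kernel bound.

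The central analytic input is the heat-kernel gradient estimate
\[
|\partial_y^k\partial_{x_i}W_y(x,z)|\le C\,y^{-(n+1)/2-k}\,e^{-c|x-z|^2/y},\qquad y>0,\;k\in\N\cup\{0\},
\]
valid in the reverse-Hölder Schrödinger setting. Integrating in $z$ yields $\|\partial_y^k\partial_{x_i}W_y\|_{L^\infty\to L^\infty}\le C\,y^{-k-1/2}$, and combining this with the hypothesis $\|\partial_y^m W_yf\|_\infty\le C\,y^{-m+\alpha/2}$ through the factorisation $\partial_y^k W_y=2^kW_{y/2}\partial_y^k W_{y/2}$ produces the working bound $\|\partial_y^k\partial_{x_i}W_yf\|_\infty\le C\,y^{-k-1/2+\alpha/2}$. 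Differentiating $k$ times in $y$ in the integrals above, replacing $\partial_y^k W_{y+t}$ by $\partial_t^k W_{y+t}$ where convenient, and rescaling $t=y\tau$ reduce everything to a dimensionless integral of the form $\int_0^\infty(1+\tau)^{-k-1/2+\alpha/2}\tau^{-1/2}d\tau$, which converges exactly when $\alpha<2k$, compatible with $k=[\alpha/2]+1$; this delivers $\|\partial_y^k W_y(Tf)\|_\infty\le C\,y^{-k+\alpha/2}$. The heat size condition for $Tf$ is transferred from that of $f$ through the same representations.

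The $\alpha>1$ restriction for $R_i$ is forced by the need to interpret $\partial_{x_i}f$ as a function on a $\Lambda^W_{\alpha/2}$ class, while the upper bounds $2-n/q$ and $1-n/q$ reflect the reverse-Hölder exponent $q$ limiting the regularity gain provided by $\LL^{-1/2}$. The main obstacle will be the $\mathcal{R}_i$ case: the failure of $\partial_{x_i}$ to commute with $W_y$ in the Schrödinger setting produces a commutator term controlled by the potential $V$, which must be handled via Duhamel's formula together with critical-radius estimates; both ingredients are sharp precisely at the threshold $1-n/q$, explaining the asymmetry between the two endpoints.
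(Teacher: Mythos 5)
There is a genuine gap: your ``central analytic input,'' the pointwise Gaussian gradient bound $|\partial_y^k\partial_{x_i}W_y(x,z)|\le C\,y^{-(n+1)/2-k}e^{-c|x-z|^2/y}$ for the \emph{Schr\"odinger} heat kernel, is not available under the paper's hypothesis $V\in RH_q$ with $q>n/2$. Gradient estimates of this Gaussian type for $\nabla_x W_y(x,z)$ are known only for $q\ge n$ (and even then carry $\rho$-dependent correction factors); for $n/2<q<n$ one only has $L^p$-averaged gradient bounds, and this is precisely why the Riesz transforms $\mathcal{R}_i=\partial_{x_i}\LL^{-1/2}$ are delicate in this setting (their $L^p$ boundedness already fails for large $p$ when $q<n$). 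The same objection applies to your integration by parts $\int\partial_{z_i}W_{y+t}(x,z)f(z)\,dz$ for $R_i$ and to the composite $W_y\partial_{x_i}W_{t/2}$ for $\mathcal{R}_i$: each step needs a gradient bound on the Schr\"odinger kernel that you have not established and that is false in the stated generality. You flag the commutator with $V$ as ``the main obstacle'' to be handled by Duhamel plus critical-radius estimates, but that is exactly the substance of the proof and it is left unexecuted. Your heuristic for the upper thresholds is also off: $2-n/q$ and $1-n/q$ do not come from a limitation on the smoothing of $\LL^{-1/2}$ (Theorem \ref{Schau} holds for all $\alpha,\beta>0$); they come from the range of validity of the comparison between the Schr\"odinger and classical heat Lipschitz scales.

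The paper's route avoids Schr\"odinger kernel gradients entirely. For $\mathcal{R}_i$ with $0<\alpha\le 1-n/q$: apply Theorem \ref{Schau} to get $\LL^{-1/2}f\in\Lambda^{W}_{(\alpha+1)/2}$, transfer to the classical class $\Lambda^{\tilde W}_{(\alpha+1)/2}$ by Theorem \ref{Holdercomparacion} (this is where the constraint $\alpha+1\le 2-n/q$ enters), differentiate there using Proposition \ref{Steinderivadax} --- which only needs gradient bounds on the \emph{Gauss} kernel --- and transfer back. For $R_i$ with $1<\alpha\le 2-n/q$ the order of the steps is reversed: transfer $f$ to $\Lambda^{\tilde W}_{\alpha/2}$, differentiate (this is where $\alpha>1$ is needed so that $\partial_{x_i}f$ exists and lies in $\Lambda^{\tilde W}_{(\alpha-1)/2}$), transfer back, and then apply $\LL^{-1/2}$ via Theorem \ref{Schau}. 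If you want to salvage your direct approach, you would have to either restrict to $q\ge n$ or replace the pointwise gradient bound by the transference mechanism of Theorem \ref{Holdercomparacion} and Proposition \ref{Steinderivadax}, which is what the paper does.
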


	  \begin{theorem}\label{multiplicador}
	  	Let $a$ be a measurable bounded function on $[0,\infty)$ and consider
	  	$$
	  	m(\lambda)=\lambda\int_0^\infty e^{-s\lambda}a(s)ds, \, \, \lambda >0.
	  	$$
	  	Then, for every $\alpha>0$, the multiplier operator of the Laplace transform type $m(\LL)$ is bounded from $\Lambda^{W}_{\alpha/2}$ into itself.
	  \end{theorem}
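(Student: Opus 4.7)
The plan is to verify the two conditions defining $\Lambda^W_{\alpha/2}$ for $m(\LL)f$, given $f\in\Lambda^W_{\alpha/2}$: the derivative estimate (\ref{derivada}) at order $k=[\alpha/2]+1$, and the pointwise bound $|m(\LL)f(x)|\le C\rho(x)^\alpha$; Theorem \ref{tam2} will then supply the full heat size condition for $m(\LL)f$ automatically. For the derivative estimate, since $m(\LL)$ and $W_y$ are both functions of $\LL$ they commute, and the Laplace representation, together with $\partial_s W_s=-\LL W_s$ and the semigroup property, yields
$$\partial_y^k W_y m(\LL)f \;=\; m(\LL)\,\partial_y^k W_y f \;=\; -\int_0^\infty a(s)\,\partial_y^{k+1} W_{y+s} f\,ds.$$
I would then upgrade (\ref{derivada}) from order $k$ to order $k+1$ by writing $W_t=W_{t/2}W_{t/2}$ and invoking the standard bound $\|\LL W_{t/2}\|_{L^\infty\to L^\infty}\le C t^{-1}$, obtaining $\|\partial_y^{k+1}W_t f\|_\infty\le C t^{-(k+1)+\alpha/2}$. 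Integrating in $s$, which converges since $k>\alpha/2$, gives $\|\partial_y^k W_y m(\LL)f\|_\infty\le C\|a\|_\infty\,y^{-k+\alpha/2}$, so $S^W_\alpha[m(\LL)f]\le C\|a\|_\infty S^W_\alpha[f]$.

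For the pointwise bound, I would split the integral defining $m(\LL)f(x)$ at $s=\rho(x)^2$. The inner portion $\int_0^{\rho(x)^2}$ is bounded by $C\rho(x)^\alpha$: directly via $|\partial_sW_sf|\le C s^{-1+\alpha/2}$ when $\alpha<2$, and for larger $\alpha$ via several integrations by parts that transfer the $s$-derivatives onto iterated antiderivatives of $a$ (which remain polynomially bounded on the finite interval $[0,\rho(x)^2]$), reducing the task to the available $k$-th order estimate on $\partial_s^k W_s f$. For the outer portion $\int_{\rho(x)^2}^\infty$, the operator-norm bound $\|\partial_sW_sf\|_\infty\le C s^{-1+\alpha/2}$ is not integrable at infinity; the argument instead uses the sharper pointwise decay of the Schr\"odinger heat kernel beyond its natural scale, $W_s(x,z)\le C_N s^{-n/2}e^{-c|x-z|^2/s}(1+\sqrt{s}/\rho(x))^{-N}$ for any $N$, combined with $|f(z)|\le M^\LL_\alpha[f]\,\rho(z)^\alpha$ and the quasi-invariance of $\rho$; the factor $(1+\sqrt{s}/\rho(x))^{-N}$ supplies the missing decay in $s$.

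The main obstacle is this outer piece: operator-norm estimates are insufficient, and one must invoke the $\rho(x)$-dependent pointwise decay of the Schr\"odinger semigroup, which is the point where the reverse H\"older assumption (\ref{reverseHolder}) on $V$ enters essentially. Once $|m(\LL)f(x)|\le C\rho(x)^\alpha$ is secured, Theorem \ref{tam2} delivers the heat size condition for $m(\LL)f$ automatically, completing $\|m(\LL)f\|_{\Lambda^W_{\alpha/2}}\le C(\|a\|_\infty)\|f\|_{\Lambda^W_{\alpha/2}}$.
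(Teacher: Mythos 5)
Your proposal is correct and follows essentially the same route as the paper: the same split of $\int_0^\infty(-\partial_s W_s f)a(s)\,ds$ at $s=\rho(x)^2$, with the outer piece controlled by the $\rho$-weighted pointwise kernel decay (Lemma \ref{sizeheat}) and the derivative bound obtained from $\partial_y^k W_y m(\LL)f=-\int_0^\infty a(s)\,\partial_u^{k+1}W_uf|_{u=y+s}\,ds$ after upgrading \eqref{derivada} to order $k+1$. Your integration by parts onto antiderivatives of $a$ for the inner piece is just a repackaging of the paper's use of Lemma \ref{cambiaryrho} (which rewrites $\partial_sW_sf$ through higher derivatives with $\rho(x)$-weights, including the logarithmic case of even $\alpha$), so the two arguments coincide in substance.
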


		 \

There are some important differences when we want to define Lipschitz spaces through the Poisson semigroup. It  can be defined by the following subordination formula
				 \begin{align}\label{Poissonformula}
					 P_yf(x)&=e^{-y\sqrt{\LL}}f(x)=\frac{y}{2\sqrt{\pi}}\int_0^\infty e^{-\frac{y^2}{4 \tau}}e^{-\tau \LL}f(x)\frac{d\tau}{\tau^{3/2}} .  
					 \end{align}
					 Getting inside the Feynman-Kac estimate of the heat kernel we get that  the kernel of the Poisson semigroup, $P_y(x,y)$ satisfies 
					 $$P_y(x,z) \le C \frac{y}{(|x-z|+ y)^{n+1}}.$$
Hence, parallel to the heat semigroup case,  we shall say that a function $f$ satisfies a {\bf Poisson size condition  for $\LL$} if 
				\begin{equation*} M^P[f]:=  \int_{\mathbb{R}^n} \frac{| f(x)|}{(1+|x|)^{n+1} }\, dx < \infty.\end{equation*}

 \begin{theorem}\label{tam3} 
						Let {$\alpha>0$} and $f$ be a function satisfying a  {\bf Poisson size condition for $\LL$} and
\begin{equation}\label{derivada3} \Big\|\partial_y^k{P}_y f \Big\|_{L^\infty(\mathbb{R}^{n})}\leq C_\alpha y^{-k+\alpha},\;\: \, {\rm with }\, k=[\alpha]+1, y>0. 
							\end{equation} 
Then,
 $M^\LL_\alpha[f]< \infty.$  
					\end{theorem}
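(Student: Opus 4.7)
The goal is to show that $|f(x)| \leq C\,\rho(x)^\alpha$ for (a.e.)\ $x$. The plan is to expand $P_y f(x)$ in a Taylor polynomial of order $k = [\alpha]+1$ in the variable $y$ around a reference scale $Y > y$, and then let $y \to 0^+$ to recover $f(x)$. Concretely, for $0 < y < Y$, Taylor's formula with integral remainder gives
\begin{equation*}
P_y f(x) \;=\; \sum_{j=0}^{k-1}\frac{(y-Y)^j}{j!}\,\partial_Y^j P_Y f(x)\;+\;\frac{(-1)^k}{(k-1)!}\int_y^{Y}(s-y)^{k-1}\,\partial_s^k P_s f(x)\,ds.
\end{equation*}
Since $f$ satisfies the Poisson size condition, $P_y f(x)$ is well-defined for every $y > 0$ and one has $P_y f(x)\to f(x)$ as $y\to 0^+$ at Lebesgue points of $f$, via the standard approximation of the identity argument for the Schr\"odinger Poisson kernel $P_y(x,z) \leq C\,y/(|x-z|+y)^{n+1}$.

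Passing to the limit $y\to 0^+$ and choosing $Y = \rho(x)$, the remainder is directly controlled by the hypothesis \eqref{derivada3}:
\begin{equation*}
\left|\int_0^{\rho(x)} s^{k-1}\,\partial_s^k P_s f(x)\,ds\right| \;\leq\; C_\alpha \int_0^{\rho(x)} s^{\alpha-1}\,ds \;=\; \frac{C_\alpha}{\alpha}\,\rho(x)^\alpha,
\end{equation*}
an estimate that only uses the integrability of $s^{\alpha-1}$ near $0$. Consequently the proof reduces to showing, for each $j = 0,1,\ldots,k-1$, the boundary estimate
\begin{equation*}
\rho(x)^j\,\bigl|\,\partial_y^j P_y f(x)\,\bigr| \;\leq\; C\,\rho(x)^\alpha \quad\text{at } y = \rho(x).
\end{equation*}

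I expect this boundary estimate to be the main obstacle. The case $j = k$ is immediate from \eqref{derivada3}; for $j < k$ the strategy is to exploit the refined pointwise bounds for the Schr\"odinger Poisson kernel and its $y$-derivatives at the critical scale $y \sim \rho(x)$. By the well-known decay associated to the potential $V$ one has, beyond the classical size $y\,(|x-z|+y)^{-n-1}$, extra factors involving $(y+|x-z|)/\rho(x)$. Pairing these refined kernel estimates with $M^P[f] < \infty$ and with the slow-variation property of $\rho$ on balls of radius $\sim \rho(x)$, the integration in $z$ against $|f(z)|$ should produce the required factor $\rho(x)^{\alpha - j}$. The delicate point is that all the constants must be genuinely uniform in $x$, so that the power $\rho(x)^\alpha$ emerges entirely from the scaling interplay between $\rho(x)$, the kernel, and the weight $(1+|z|)^{-n-1}$ built into the Poisson size condition.
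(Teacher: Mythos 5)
Your skeleton is sound and essentially parallels the paper's argument: the paper also decomposes $|f(x)|\le \sup_{0<y<\rho(x)}|P_yf(x)-P_{\rho(x)}f(x)|+|P_{\rho(x)}f(x)|$ and controls the first piece by integrating the hypothesis \eqref{derivada3}; your Taylor remainder with the factor $(s-y)^{k-1}$ is in fact a small improvement, since $s^{k-1}\cdot s^{-k+\alpha}=s^{\alpha-1}$ is integrable for every $\alpha>0$ and you avoid the separate logarithmic computation the paper needs when $\alpha$ is an integer. The a.e.\ convergence $P_yf(x)\to f(x)$ is also available (Proposition \ref{Poisoninfinito}), although it is not a purely ``standard'' approximation-of-the-identity fact: one must compare $P_y$ with the classical Poisson kernel via the perturbation formula.

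The genuine gap is in the boundary terms $\rho(x)^j\,|\partial_y^jP_yf(x)|_{y=\rho(x)}|$ for $0\le j\le k-1$. Your proposed route --- refined kernel bounds for $\partial_y^jP_y(x,z)$ integrated against $|f(z)|$ using only $M^P[f]<\infty$ and the slow variation of $\rho$ --- cannot work: the condition $\int|f(z)|(1+|z|)^{-n-1}\,dz<\infty$ is a global, purely qualitative integrability statement and gives no control of $\int_{|x-z|\lesssim\rho(x)}|f(z)|\,dz$ of the order $\rho(x)^{n+\alpha-j}$ that the scaling would require; indeed, without re-using \eqref{derivada3} the desired bound $|f(x)|\le C\rho(x)^\alpha$ is simply false. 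The correct mechanism (Lemma \ref{cambiaryrhoP} in the paper) feeds the hypothesis back in through the semigroup property: for $\ell>k$ one writes $\partial_y^\ell P_yf(x)=c_\ell\int \partial_v^{\ell-k}P_v(x,z)|_{v=y/2}\,\partial_u^kP_uf(z)|_{u=y/2}\,dz$, bounds the inner factor by $S^P_\alpha[f](y/2)^{-k+\alpha}$ using \eqref{derivada3}, and harvests the factor $(\rho(x)/y)^m$ from the decay $\bigl(1+(|x-z|^2+v^2)^{1/2}/\rho(x)\bigr)^{-N}$ of the Schr\"odinger Poisson kernel; one then integrates from $y$ to $\infty$ (the derivatives vanish at infinity thanks to the Poisson size condition) to descend to $j\le k$. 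Taking $m=k-j$ and $y=\rho(x)$ yields exactly $|\partial_y^jP_yf(x)|_{y=\rho(x)}|\le C\,S^P_\alpha[f]\,\rho(x)^{\alpha-j}$, which is the estimate your argument is missing.
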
	
					
					\begin{remark}\label{excep}
					Observe that if $f$ is a function such that  $M^\LL_\alpha[f]< \infty$ with  $0<\alpha<1$ (see Lemma \ref{Shenlemma} with $x=0$) or $\rho\in L^\infty(\R^n)$, then  $f$ satisfies a  {\bf Poisson size condition for $\LL$}. 
					\end{remark}
\noindent The previous theorem drives us to the following definition.
	
					\begin{definition}[]\label{PoissonZygmund} Let $f$ be a function that satisfies $M^P[f] <\infty.$  Given $\alpha >0,$ we shall say that $f$ belongs to the class $\Lambda_{\alpha}^{{P}}$ 
if it satisfies \eqref{derivada3}. The linear space  can be endowed with the norm
					\begin{equation*}
			\|f\|_{\Lambda_{\alpha}^{{P}}}:=   S^P_\alpha[f] +M^\LL_\alpha[f],
					\end{equation*}
					where 
					$S^P_\alpha[f]$ is the infimum of the constants $C_\alpha$ appearing in 
					\eqref{derivada3}.
			\end{definition}	
					In \cite{MSTZ}, the authors proved a characterization of the class $\Lambda_\alpha^P$ in the case  $0<\alpha <1$ for functions satisfying the integrability condition   $\displaystyle \int_{\mathbb{R}^n} \frac{|f(z)|}{(|z|+1)^{n+\alpha+\varepsilon}} dz<\infty$.    We can extend the characterization beyond $1$. Namely, we have the following result.			
		 	 \begin{theorem}\label{identities4} Let f be a function with $M^P[f] < \infty.$
			 For  $0< \alpha \le 2-n/q$,  the following statements are equivalent:
				
				$$ f \in \Lambda^\alpha_\LL, \quad 
 f \in \Lambda_{\alpha}^{P}, \quad 
  f \in \Lambda_{\alpha/2}^{W}.$$
  Moreover, the norms are equivalent.
				
			 	 \end{theorem}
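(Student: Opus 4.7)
The plan is to combine Theorem \ref{identities} with a two-way comparison between the Poisson characterization and the heat one. Since Theorem \ref{identities} already gives $\Lambda^\alpha_\LL = \Lambda_{\alpha/2}^W$ for $0<\alpha\le 2-n/q$, the remaining task is to prove $\Lambda^\alpha_\LL = \Lambda_\alpha^P$ (equivalently $\Lambda_{\alpha/2}^W = \Lambda_\alpha^P$) on the class of functions with $M^P[f]<\infty$. I would handle the two inclusions separately, using the subordination formula \eqref{Poissonformula} in one direction and a Stein--Taibleson style integral representation of $f$ in terms of $\partial_y^k P_y f$ in the other.

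\emph{Heat into Poisson.} For $\Lambda_{\alpha/2}^W \subseteq \Lambda_\alpha^P$, I would exploit the identity $\partial_y^{2m} P_y f = \LL^m P_y f$ with $m=[\alpha/2]+1$, which together with \eqref{Poissonformula} yields
$$\partial_y^{2m}P_y f(x) = \frac{(-1)^m y}{2\sqrt{\pi}}\int_0^\infty e^{-y^2/(4\tau)}\,\partial_\tau^m W_\tau f(x)\,\frac{d\tau}{\tau^{3/2}}.$$
The bound on $\|\partial_\tau^m W_\tau f\|_\infty$ given by membership in $\Lambda_{\alpha/2}^W$, combined with the change of variable $\tau=y^2 s$, yields $\|\partial_y^{2m} P_y f\|_\infty \le C y^{\alpha-2m}$. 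A standard ``lowering the order'' step (integrating from $y$ to infinity while using the vanishing at infinity of lower-order derivatives of $P_y f$) converts this to the required bound with $k=[\alpha]+1$ derivatives. The condition $M^P[f]<\infty$ is then automatic from $M^\LL_\alpha[f]<\infty$ through Remark \ref{excep}.

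\emph{Poisson into Schr\"odinger--Lipschitz.} For $\Lambda_\alpha^P \subseteq \Lambda^\alpha_\LL$, Theorem \ref{tam3} already delivers $M^\LL_\alpha[f]<\infty$, so only the Zygmund condition $N_\alpha[f]<\infty$ remains. With $k=[\alpha]+1$, after checking the decay $\partial_y^j P_y f(x)\to 0$ as $y\to\infty$ for $0\le j<k$ (which follows from $M^P[f]<\infty$ and the Poisson kernel bound), iterated integration by parts in the parameter $y$ yields the representation
$$f(x) = \frac{(-1)^k}{(k-1)!}\int_0^\infty s^{k-1}\,\partial_s^k P_s f(x)\,ds.$$
Applying the symmetric second difference and splitting at $s=|z|$, on $s\le |z|$ the trivial estimate $|\Delta_z^2 \partial_s^k P_s f(x)|\le 4\|\partial_s^k P_s f\|_\infty \le C s^{-k+\alpha}$ contributes $\int_0^{|z|} s^{\alpha-1}\,ds \lesssim |z|^\alpha$, while on $s>|z|$ the Taylor estimate $|\Delta_z^2 g(x)|\le |z|^2 \|\nabla_x^2 g\|_\infty$ applied with spatial gradient bounds $\|\nabla_x^2 \partial_s^k P_s f\|_\infty \le C s^{-k-2+\alpha}$ produces $|z|^2 \int_{|z|}^\infty s^{\alpha-3}\,ds \lesssim |z|^\alpha$ since $\alpha<2$.

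\emph{Main obstacle.} The most delicate ingredient is the spatial gradient estimate $\|\nabla_x^j \partial_s^k P_s f\|_\infty \lesssim s^{-k-j+\alpha}$ for $j\le 2$. In the classical setting this is immediate from harmonicity of $P_y f$ in the upper half-space, but for the Schr\"odinger Poisson semigroup it must be derived either by subordination from the analogous heat semigroup gradient estimates (which rest on Shen-type bounds and the reverse H\"older condition \eqref{reverseHolder}) or by transfer from the classical Poisson gradient estimates through a perturbation formula analogous to the one used for Theorem \ref{comparacion}. A secondary technical point is justifying the integral representation of $f$ via $s^{k-1}\partial_s^k P_s f$: the growth allowed by $M^P[f]<\infty$ is too weak for a naive application of dominated convergence, so the boundary behavior at $y=0$ and $y=\infty$ must be handled through a careful iterated integration combined with the Poisson size condition.
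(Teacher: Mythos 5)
Your first direction (heat into Poisson) is essentially the paper's Theorem \ref{calor-poisson}: the identity $\partial_y^2\bigl(ye^{-y^2/4\tau}\tau^{-3/2}\bigr)=\partial_\tau\bigl(ye^{-y^2/4\tau}\tau^{-3/2}\bigr)$ plus $k$-fold integration by parts in $\tau$ gives exactly your formula, and the order-lowering step is Proposition \ref{subirelkP}. One incorrect side remark there: $M^P[f]<\infty$ is \emph{not} automatic from $M^\LL_\alpha[f]<\infty$ when $\alpha\ge 1$ (Remark \ref{excep} only covers $\alpha<1$ or $\rho\in L^\infty$; the paper explicitly points out that this is why $M^P[f]<\infty$ must be assumed in the theorem). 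Since the hypothesis is assumed anyway, this does not break the argument.

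The genuine gap is in the converse direction. Your splitting argument for $N_\alpha[f]$ rests on the estimate $\|\nabla_x^2\partial_s^k P_s f\|_\infty\lesssim s^{-k-2+\alpha}$ for the \emph{Schr\"odinger} Poisson semigroup, and you correctly identify this as the delicate point, but neither of the two routes you sketch for it works under the standing hypothesis $q>n/2$. Subordination from heat-kernel gradient bounds fails because second-order spatial Gaussian estimates for $W_y(x,z)$ are simply not available for general reverse H\"older potentials (even first-order ones require $q\ge n$). The perturbation route also breaks down: in the Kato--Trotter formula $\tilde W_t-W_t=\int_0^t\tilde W_{t-s}VW_s\,ds$ the only $x$-dependence sits in $\tilde W_{t-s}(x-z)$, so two $x$-derivatives produce a factor $(t-s)^{-1}$ that, combined with the $V$-integral of Lemma \ref{estV}, gives a non-integrable singularity $\int^t(t-s)^{-2+(2-n/q)/2}\,ds$ at $s=t$; there is no analogue of the paper's trick of moving the derivative onto $W_s$ in the range $s\in(t/2,t)$, because $V$ cannot be differentiated. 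This is precisely why the paper never takes spatial derivatives of the Schr\"odinger semigroups: it compares only \emph{time} derivatives ($\|\partial_y^2P_yf-\partial_y^2\tilde P_yf\|_\infty\lesssim y^{-2+\alpha}$, Theorem \ref{identities3}), concludes $\Lambda_\alpha^P=\Lambda_\alpha^{\tilde P}$, and then runs the Zygmund-type splitting entirely for the \emph{classical} Poisson semigroup (Theorem \ref{nuevoStein2} via Lemma \ref{cambiaryderx}), where mixed derivatives $\partial_{x_i}^m\partial_y^j\tilde P_yf$ are harmless. A secondary flaw: your representation $f(x)=\frac{(-1)^k}{(k-1)!}\int_0^\infty s^{k-1}\partial_s^kP_sf(x)\,ds$ is not absolutely convergent at $s=\infty$ (the integrand is only $O(s^{\alpha-1})$ there), so even after repairing the gradient issue you would need to work with $f-P_Yf$ and track boundary terms as $Y\to\infty$, or argue directly on the second difference as the paper does.
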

			 	 Since the converse of Theorem  \ref{tam3} is not true in general, we  have to  assume the hypothesis $M^P[f] <\infty$ in Theorem \ref{identities4}. In the case of the Hermite operator, since $\rho(x)=\frac{1}{1+|x|}\in L^\infty(\R^n)$, that hypothesis is not necessary (see Remark \ref{excep}) and the result  holds for $0<\alpha<2$. 			 	 
			 	  To prove Theorem  \ref{identities4} we need to introduce new {spaces} of functions $\Lambda_\alpha^{\tilde{P}}$, see Section \ref{secpoisson}, defined via the classical Poisson semigroup, that are more general than the ones defined by Stein in \cite{Stein} and we will compare {them} with the spaces $\Lambda_\alpha^P$.

	In \cite{BonHar}, the authors proved that, in the case $0<\alpha <1$, the space 		 $\Lambda^\alpha_\LL$ is isometric to the space $BMO^\alpha_\LL $  defined as the set of locally integrable functions such that, for every ball $B=B(x,R), R>0 $
\begin{eqnarray*} \int_B |f-f_B| &\le& C |B|^{1+\alpha/n}, \hbox{ with } f_B= \frac1{|B|} \int_B f \\
\hbox{ and }  \int_B |f| &\le& C|B|^{1+\alpha/n}, \hbox{  if   } 
R \ge \rho(x).
\end{eqnarray*}
Hence, our Theorems \ref{identities} and  \ref{identities4} can be viewed as a sort  of Carleson condition characterizations of the space $BMO^\alpha_\LL$. In the case of the Poisson semigroup, a complete Carleson characterization has been given in \cite{MSTZ} for a more restricted class of functions.
			
The organization of the paper is the following. In Section \ref{generalfacts} we collect  technical results about the heat kernel of the Schr\"odinger 	operators. Also, we analyze the spaces defined by using the heat kernel. We end the section by proving the natural growth at infinity of this class of functions (Proposition \ref{tam}). In Section 	\ref{comparativa}, we introduce an auxiliary space of functions defined by using the classical heat Gauss semigroup (Definition \ref{defclasica}). These spaces are characterized point-wise and also are compared with the classes defined through the heat semigroup associated to $\LL$. These two facts allow us to prove Theorem \ref{identities}. In Section \ref{pruebaaplicaciones}  	we prove  Theorems \ref{Schau}--\ref{multiplicador}, related with applications. Finally, 	Section \ref{secpoisson} is devoted to the proofs related with the $ \Lambda_{\alpha}^{P}$ spaces.		 	 

Along this paper, we will use the variable constant convention, in which $C$ denotes a
constant that may not be the same in each appearance. The constant will be written with
subindexes if we need to emphasize the dependence on some parameters.

					 \section{Properties of Heat Lipschitz spaces associated to $\LL$.}\label{generalfacts}
					 \subsection{Technical  results}
					 
					The nonnegative potential  $V$ is assumed to satisfy the following 
					reverse H\"older condition:
					  \begin{equation}\label{reverseHolder}
		\left( \frac{1}{|B|}\int_B V(y)^qdy\right)^{1/q}\le \frac{C}{|B|}\int_B V(y)dy, \hbox{ with an exponent } q>n/2,
		\end{equation}
		for every ball $B$.	
		Consider the critical radius function defined by 			
					 \begin{equation}\label{critical}\rho(x)=\sup\Big\{r>0: \: \frac{1}{r^{n-2}} \int_{B(x,r)}V(y)dy\le 1\Big\}.\end{equation}

									 Let $W_y(x,z)$ be the integral kernel of the semigroup of $e^{-y\LL}$ generated by $-\LL$. That is, for $f$  satisfying a {\bf heat size condition}
					 $$
					 e^{-y\LL}f(x)=\int_{\R^n}W_y(x,z)f(z)dz, \:\: x\in\R^n.
					 $$ 
					 It is  known 	 (see \cite{DZ1,Kurata}) that the integral kernel $W_\zeta(x,y)$ of the extension of $e^{-y\LL} $ to the holomorphic semigroup $\{e^{-\zeta \LL}\}_{\zeta \in \triangle_{\pi/4}} $ satisfies  
					 \begin{equation}\label{cotanucleo}
					|W_\zeta(x,z)|\le C_N\frac{e^{-\frac{|x-z|^2}{c \Re\zeta}}}{ (\Re\zeta)^{n/2}}\left(1+\frac{\sqrt{\Re \zeta }}{\rho(z)}+\frac{\sqrt{\Re \zeta}}{\rho(x)} \right)^{-N}, \:\:\; x,z\in\R^n,
					 \end{equation}
					for $N>0$ arbitrary.
					
					 \begin{lemma}\label{Cauchy}
					 	Let $k\ge 1$. There exist constants  $c, C_k>0$ such that, for every $M>0$,
					 	$$
					 	|\partial_y^kW_y(x,z)|\le C_k \frac{e^{-\frac{|x-z|^2}{cy}}}{y^{k+n/2}}\left(1+\frac{\sqrt{y}}{\rho(z)}+\frac{\sqrt{y}}{\rho(x)} \right)^{-M}.
					 	$$
					 		 \end{lemma}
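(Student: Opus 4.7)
My plan is to obtain the derivative bound by exploiting the holomorphy of $\zeta\mapsto W_\zeta(x,z)$ on the sector $\triangle_{\pi/4}$ together with the pointwise estimate \eqref{cotanucleo}, via Cauchy's integral formula. Since $y>0$ lies on the real axis inside $\triangle_{\pi/4}$, the disc $D_y:=\{\zeta\in\C:|\zeta-y|=y/2\}$ is contained in $\triangle_{\pi/4}$; it has the key property that $\Re\zeta\in[y/2,3y/2]$ for every $\zeta\in D_y$, i.e.\ $\Re\zeta\asymp y$ uniformly on the contour.

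The main step is to write, by Cauchy's formula for the $k$-th derivative,
\begin{equation*}
\partial_y^k W_y(x,z)=\frac{k!}{2\pi i}\oint_{|\zeta-y|=y/2}\frac{W_\zeta(x,z)}{(\zeta-y)^{k+1}}\,d\zeta,
\end{equation*}
and then estimate the integrand using \eqref{cotanucleo} with the free parameter $N$ chosen equal to the desired $M$. On $D_y$ the denominator is $(y/2)^{k+1}$, while the length of the contour is $\pi y$, giving the prefactor $C_k\, y^{-k}$ after cancelling one factor of $y$.

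It remains to transfer the $\Re\zeta$-dependent factors on the right-hand side of \eqref{cotanucleo} to $y$-dependent factors. Since $\Re\zeta\ge y/2$, the Gaussian factor satisfies
\begin{equation*}
\frac{e^{-|x-z|^2/(c\,\Re\zeta)}}{(\Re\zeta)^{n/2}}\le \frac{2^{n/2}\,e^{-|x-z|^2/(c'y)}}{y^{n/2}},\qquad c'=3c/2,
\end{equation*}
and for the critical-radius factor, using $\sqrt{\Re\zeta}\ge \sqrt{y/2}$ in the denominator expression,
\begin{equation*}
1+\frac{\sqrt{\Re\zeta}}{\rho(z)}+\frac{\sqrt{\Re\zeta}}{\rho(x)}\ge \frac{1}{\sqrt{2}}\Bigl(1+\frac{\sqrt{y}}{\rho(z)}+\frac{\sqrt{y}}{\rho(x)}\Bigr),
\end{equation*}
which, raised to the $-N$ power, produces the factor $(1+\sqrt{y}/\rho(z)+\sqrt{y}/\rho(x))^{-M}$ up to the harmless multiplicative constant $2^{M/2}$. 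Combining these three estimates with the Cauchy formula yields the claim with a constant $C_k$ depending on $k$ and $M$, and a universal $c>0$.

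The only delicate point is ensuring that the contour $D_y$ remains in the sector of holomorphy and that the comparison of $\Re\zeta$ with $y$ gives losses that only affect the constants; both are immediate from the choice of radius $y/2$, so I do not expect a real obstacle beyond bookkeeping of constants.
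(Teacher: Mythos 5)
Your proposal is correct and follows essentially the same route as the paper: apply Cauchy's integral formula for the $k$-th derivative to the holomorphic extension $W_\zeta(x,z)$ on a circle centred at $y$ (the paper uses radius $y/10$, you use $y/2$; both stay in $\triangle_{\pi/4}$ and keep $\Re\zeta\asymp y$), then invoke \eqref{cotanucleo} with $N=M$ and absorb the comparison of $\Re\zeta$ with $y$ into the constants. Your bookkeeping of the Gaussian, the power $(\Re\zeta)^{-n/2}$, and the critical-radius factor is accurate, so no changes are needed.
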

							 The case $k=1$ of  this Lemma can be found  in \cite[Formula (2.7)]{DGMTZ} and \cite{DZ}.
							\begin{proof}
					 	By  Cauchy's integral formula  and \eqref{cotanucleo} we have 
					 	\begin{align*}| \partial_y^k W_y(x,z)| &= k! \Big|\frac{1}{2\pi i} \int_{|\zeta-y|=y/10} \frac{W_\zeta (x,z) }{(\zeta-y)^{k+1}} d\zeta\Big|  \le C_k \frac1{y^{k+n/2}} \Big( 1+ \frac{\sqrt{y}}{\rho(x)} + \frac{\sqrt{y}}{\rho(z)} \Big)^{-N} e ^{  -\frac{|x-z|^2}{cy}}.
					 	\end{align*}
					 \end{proof}
		\begin{remark}\label{consecuencia}
		A consequence of the last Lemma is that $\displaystyle \int_{\mathbb{R}^n} {|}\partial_y^kW_y(x,z) {|} dz \le \frac{C}{y^k}.$		
		\end{remark}

			\subsection{Controlled growth at infinity. Proof of Theorem \ref{tam2} }
	
	\
			
We shall denote by $\tilde{W}_y$ the Gauss kernel, in other words, the kernel of the classical heat semigroup $e^{y\Delta}$. The following Lemma is inspired in   \cite{Garrigos}, we sketch here the proof for completeness. 		\begin{lemma}\label{claim} Let $f$ be a measurable function such that there exists  $y_0>0$  for which $\int_{\mathbb{R}^n} e^{-\frac{|x|^2}{y_0}} f(x) dx < \infty$.  Then, $\lim_{y\rightarrow  0} W_yf(x) = f(x)$, a.e. $x\in\R^n$.
			\end{lemma}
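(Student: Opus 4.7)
The plan is to exploit the Gaussian upper bound on the heat kernel of $\mathcal{L}$ (from \eqref{cotanucleo} with $N=0$, which is just the Feynman--Kac bound) to reduce the statement to classical facts about the Gauss kernel $\tilde{W}_y$ at Lebesgue points of $f$.

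First I would verify that $W_y f(x)$ is well-defined for $y$ sufficiently small. Setting $N=0$ in \eqref{cotanucleo} yields $W_y(x,z) \le C y^{-n/2} e^{-|x-z|^2/(cy)}$, which, combined with the hypothesis $\int_{\mathbb{R}^n} e^{-|z|^2/y_0}|f(z)|\,dz<\infty$, gives absolute convergence of $W_y f(x)$ for every $x$ whenever $cy \le y_0/2$; this also produces a majorant uniform in $y$ on the relevant range.

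Next, I would use the splitting
\begin{equation*}
W_y f(x) - f(x) \;=\; \int_{\mathbb{R}^n} W_y(x,z)\bigl(f(z)-f(x)\bigr)\,dz \;+\; f(x)\bigl(W_y\mathbf{1}(x) - 1\bigr).
\end{equation*}
For the first term, the Gaussian domination bounds the integrand pointwise by $C\,\tilde{W}_{cy}(x,z)\,|f(z)-f(x)|$. At every Lebesgue point of $f$ (hence a.e.\ $x$) the classical approximation-of-identity argument for the Gauss semigroup shows that this integral tends to $0$ as $y \to 0^+$; the Gaussian integrability hypothesis on $f$ controls the tail.

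For the second term it suffices to prove $W_y\mathbf{1}(x) \to 1$ a.e. By the Feynman--Kac representation $W_y\mathbf{1}(x) = \mathbb{E}_x\bigl[\exp(-\int_0^y V(B_s)\,ds)\bigr]$; since \eqref{reverseHolder} places $V$ locally in $L^q$ with $q>n/2$ (a Kato-type condition), $\int_0^y V(B_s)\,ds \to 0$ almost surely as $y \to 0^+$ for Brownian paths starting from a.e.\ $x$, and dominated convergence gives the claim. Equivalently, one can argue analytically via Duhamel: $\tilde{W}_y\mathbf 1(x) - W_y\mathbf 1(x) = \int_0^y \tilde W_{y-s}(V W_s\mathbf 1)(x)\,ds$, which is $o(1)$ as $y\to 0$ at Lebesgue points of $V$ using again the local $L^q$ bound on $V$. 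The main obstacle is making this last step rigorous: the product $VW_s\mathbf{1}$ carries the singularities of $V$, and pushing the small-time integral past them is exactly where the reverse Hölder exponent $q>n/2$ is needed. Every other step is classical once the Gaussian bound on $W_y$ reduces the problem to the heat kernel $\tilde{W}_y$.
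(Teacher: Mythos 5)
Your argument is correct in outline but follows a genuinely different route from the paper's. The paper splits $f$ by support, $f=f\chi_{\{|z|\le 2A\}}+f\chi_{\{|z|>2A\}}$, kills the far part with the Gaussian factor exactly as you do for your tails, and for the compactly supported ($L^1$) part invokes the ready-made kernel comparison \eqref{difcalor} from \cite{DZ1}, $|W_y(x,z)-\tilde W_y(x-z)|\le C(\sqrt y/\rho(x))^{2-n/q}\omega_y(x-z)$ for $\sqrt y\le\rho(x)$; this single estimate reduces everything to classical a.e.\ convergence of $\tilde W_y f_1$ and $\omega_y\star f_1$, and in particular it already absorbs the mass defect, so the question of whether $W_y\mathbf 1\to 1$ never arises. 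You instead run the classical approximate-identity decomposition against the Gaussian majorant and isolate the term $f(x)(W_y\mathbf 1(x)-1)$, which is precisely where the potential enters and which you correctly identify as the only nontrivial step. Your Duhamel version does close it: since $\tilde W_y\mathbf 1=1$ and $0\le W_s\mathbf 1\le 1$, one gets $0\le 1-W_y\mathbf 1(x)\le\int_0^y\tilde W_uV(x)\,du$, and this is $O(y)$ at every $x$ where $\sup_{0<u<1}\tilde W_uV(x)<\infty$, which holds a.e.\ because the reverse H\"older condition makes $V\,dx$ doubling, hence $\int_{B(0,R)}V$ grows polynomially and $\tilde W_uV(x)$ is finite and converges at Lebesgue points of $V$ (you should say this explicitly rather than appeal only to ``local $L^q$''; the probabilistic Kato-class route needs the same global growth control to justify dominated convergence). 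In effect you re-derive a weak form of the perturbation estimate that the paper simply quotes: your proof is more self-contained and does not need the full strength of \cite[Proposition 2.16]{DZ1}, while the paper's is shorter because the hard small-time analysis of $V$ is outsourced to that reference.
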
  
			\begin{proof} 
				Let $|x|\le A, \, A \in \mathbb{N}.$ Given a function $f$ we split
				$$f= f \chi_{\{|z|\le 2A\}} + f \chi_{\{|z|> 2A\}}  =f_1+f_2.$$ 
				Observe that, for $|z|>2A$,  $|x-z| \ge \frac{|z|}{2}$. Hence, by using (\ref{cotanucleo}), we get   for $y<y_0/(8c)$,   
		$$W_y(x,z)\le C\frac{e^{-\frac{|x-z|^2}{cy}}}{y^{n/2}} \le C\frac{e^{-\frac{|z|^2}{4cy}}}{y^{n/2}}  \le C\frac{e^{-\frac{A^2}{2cy}} e^{-\frac{|z|^2}{y_0}}}{y^{n/2}}.$$	
		Hence 
		$$|W_y f_2(x) | \le C	y^{-n/2} e^{-\frac{A^2}{2cy}} \int_{\mathbb{R}^n} |f(z)| e^{-\frac{|z|^2}{y_0}} dz  \rightarrow 0,  \, \hbox{ as } y \rightarrow 0.$$
			
			On the other hand, a function $\omega$ is said to be rapidly decaying if for every $N>0$ there exists a constant $C_N$ with $|\omega(x)| \le C_N(1+|x|)^{-N}.$   For a  rapidly decaying function $\omega$, we shall denote $w_y(x)=y^{-n/2}w(y^{-1/2}x)$, $y>0$, $x\in\R^n$.
 It is known, see \cite[Proposition 2.16]{DZ1}, that there exists a nonnegative rapidly decaying function $\omega$ such that  
			\begin{equation}\label{difcalor}			
		|W_y(x,z) - \tilde{W}_y(x-z)| \le C \Big( \frac{\sqrt{y}}{\rho(x)}\Big)^{2-n/q} \omega_y(x-z),  \hbox{  for  } \sqrt{y} \le  \rho(x).	\end{equation}
		Hence, for  $\sqrt{y} \le  \rho(x)$ ,
				$$|W_yf_1(x) - \tilde{W}_yf_1(x) | \le C \Big( \frac{\sqrt{y}}{\rho(x)}  \Big)^{2-n/q} \omega_y\star f_1(x)  .$$
				By the standard point-wise convergence for $L^1$-functions we have $$\lim_{y\rightarrow 0} \tilde{W}_y f_1(x) = f_1(x),   \text{and} \,\lim_{y\rightarrow 0} \omega_y\star f_1(x) = f_1(x) \text{ a.e }\, x\in\R^n.$$
				Therefore we get $$\lim_{y\rightarrow 0} W_y f_1(x) = f_1(x), \, \text{ a.e }\, x\in\R^n $$
	\end{proof}
\begin{proposition}\label{subir el $k$}
		Let $\alpha >0$, $k= [\alpha/2]+1$ and $f$ be a function satisfying the heat size condition. Then, $\| \partial_ y^{k} W_y f\|_ {L^\infty(\mathbb{R}^n)} \le C_\alpha y^{-k+ \alpha/2}$ if, and only if,   for $m \ge k$, $\| \partial_ y^{m} W_y f\|_ {L^\infty(\mathbb{R}^n)} \le C_m y^{-m+ \alpha/2}$. Moreover, for each $m$,  $C_m$ and $C_\alpha$ are comparable.
	\end{proposition}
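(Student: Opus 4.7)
The plan is to establish both directions of the equivalence, each relying on the semigroup structure of $W_y = e^{-y\LL}$.

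For the forward direction, from the bound at order $k$ to the bound at any order $m > k$, I would combine the semigroup factorisation $W_y = W_{y/2}\,W_{y/2}$ with the fact that $\partial_t^r W_t = (-\LL)^r W_t$ commutes with $W_s$ for every admissible $r,s$. Splitting $\partial_y^m = (-\LL)^{m-k}(-\LL)^k$ and placing the two factors on different copies of $W_{y/2}$ gives
\[
\partial_y^m W_y f(x) = \int_{\R^n} \bigl[\partial_t^{m-k} W_t(x,z)\bigr]_{t=y/2}\,\bigl[\partial_s^k W_s f\bigr]_{s=y/2}(z)\,dz.
\]
Remark \ref{consecuencia} provides $\int_{\R^n}|\partial_t^{m-k} W_t(x,z)|\, dz \le C(y/2)^{-(m-k)}$, and the hypothesis evaluated at $s = y/2$ gives $\|\partial_s^k W_s f\|_\infty \le C_\alpha (y/2)^{-k+\alpha/2}$. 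Multiplying these estimates produces $\|\partial_y^m W_y f\|_\infty \le C\,C_\alpha\,y^{-m+\alpha/2}$, with $C_m$ comparable to $C_\alpha$.

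For the reverse direction (which is of independent use later, especially for the comparisons between the heat and Poisson characterisations), I would proceed by downward induction on the order of the derivative. The heat size condition forces $\lim_{y\to\infty}\partial_y^j W_y f(x) = 0$ for every $j\ge 0$, so the fundamental theorem of calculus gives
\[
\partial_y^j W_y f(x) = -\int_y^\infty \partial_t^{j+1} W_t f(x)\, dt.
\]
If $\|\partial_t^{j+1} W_t f\|_\infty \le C_{j+1}\, t^{-(j+1)+\alpha/2}$, then since $j \ge k = [\alpha/2]+1 > \alpha/2$ the exponent satisfies $-(j+1)+\alpha/2 < -1$, so the integral converges and is bounded by a multiple of $y^{-j+\alpha/2}$. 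Iterating from $j = m-1$ down to $j = k$ yields the bound at order $k$ with constant comparable to $C_m$.

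The main obstacle is identifying the correct decomposition in the forward direction; once the semigroup factorisation is applied as above, the problem reduces to the already known $L^1$ kernel estimate of Remark \ref{consecuencia}. The only delicate point is verifying that these manipulations are legitimate for a function $f$ that merely satisfies the heat size condition, but this is guaranteed by the Gaussian upper bound \eqref{cotanucleo} together with Lemma \ref{Cauchy}, which ensure absolute convergence of all the integrals in play.
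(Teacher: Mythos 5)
Your proposal is correct and follows essentially the same route as the paper: the forward direction uses the semigroup factorisation $W_y=W_{y/2}W_{y/2}$ together with the $L^1$ kernel bound of Remark \ref{consecuencia}, and the converse integrates from $y$ to $\infty$ using the decay at infinity built into the heat size condition. Your version merely makes explicit the convergence check $-(j+1)+\alpha/2<-1$, which the paper leaves implicit.
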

	
	\begin{proof}  Let $m \ge  [\alpha/2]+1=k$. By the semigroup property  and Remark \ref{consecuencia} we have  
		\begin{eqnarray*} \Big| \partial_ y^{m}  W_yf(x) \Big|=  C \Big| \partial_y^{m-k}W_{y/2}(\partial_u^{k}W_{u}f(x)\big|_{u=y/2}) \Big|
			\le  C_\alpha'\frac1{y^{m-k}} y^{-k +\alpha/2} = C_m y^{-m+\alpha/2}. 
		\end{eqnarray*}
	
		 For the converse, the fact    $| \partial_ y^{\ell} W_y f(x) | \to 0 \text{ as } y\to\infty,$  allows us to integrate  on $y$ as many times as we need to  get $\| \partial_ y^{k}W_y f\|_ {L^\infty(\mathbb{R}^n)} \le C_\alpha\, y^{-k+ \alpha/2}.$
		\end{proof}

	To prove 
 Theorem \ref{tam2},   we need some lemmas and propositions that we present now.
  The following lemma can be found in \cite{ DZ,Shen}.
					 \begin{lemma}\label{Shenlemma}
					 	There exist constants $C>0$ and $k_0\ge1$ such that, for all $x,z\in\R^n$,
					 	\begin{align*}
					 	C^{-1}{\rho(x)}\left(1+\frac{|x-z|}{\rho(x)} \right)^{-k_0}\le{\rho(z)}\le		{C\rho(x)\left(1+\frac{|x-z|}{\rho(x)} \right)^{\frac{k_0}{1+k_0}}}.
					 	\end{align*}
					 	In particular, $\rho(x)\sim \rho(z)$ when $z\in B_r(x)$ and $r\le C\rho(x)$.
					 \end{lemma}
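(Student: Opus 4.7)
The plan is to characterize the critical radius through the auxiliary quantity
$$\Phi(x,r) := \frac{1}{r^{n-2}}\int_{B(x,r)} V(y)\,dy,$$
so that $\rho(x)$ is the unique value with $\Phi(x,\rho(x))=1$, and then to transfer information between base points by ball-inclusion arguments. The essential technical input is a one-sided scaling estimate: applying Hölder's inequality on $B(x,r_1)$ and the reverse Hölder condition \eqref{reverseHolder} on the enclosing ball $B(x,r_2)$ yields
$$\Phi(x,r_1)\le C\left(\frac{r_1}{r_2}\right)^{2-n/q}\Phi(x,r_2),\qquad 0<r_1<r_2.$$
Since $q>n/2$, the exponent $2-n/q$ is strictly positive, so $\Phi(x,\cdot)$ is quasi-monotone increasing; combined with continuity this ensures $\rho(x)<\infty$ and $\Phi(x,\rho(x))=1$.

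The second step is the local comparison, i.e.\ the ``in particular'' statement. For $r\ge|x-z|$ one has $B(z,r)\subset B(x,2r)$, and combining Hölder on $B(z,r)$ with reverse Hölder on $B(x,2r)$ produces $\Phi(z,r)\le C\Phi(x,2r)$ (the $r$-powers cancel cleanly). Taking $2r=\rho(x)$ gives $\Phi(z,\rho(x)/2)\le C$, and the scaling estimate applied at $z$ then supplies some $r_0=c\rho(x)$ with $\Phi(z,r_0)\le 1$, hence $\rho(z)\ge c\rho(x)$. The reverse inequality follows by a symmetry argument combined with the lower bound just obtained, treating separately the cases $\rho(z)\le\rho(x)$ and $\rho(z)>\rho(x)$.

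For the global two-sided estimate, write $R=|x-z|$. The lower bound on $\rho(z)$ is the more direct of the two: one locates the largest $r$ for which $\Phi(z,r)\le 1$ using $B(z,r)\subset B(x,R+r)$ together with the scaling estimate which relates $\Phi(x,R+r)$ back to $\Phi(x,\rho(x))=1$; optimizing in $r$ produces the power $k_0$, whose precise value is determined by the ratio between $n-2$ and $2-n/q$. The upper bound instead requires a chain argument: connect $x$ to $z$ through intermediate points $x=x_0,x_1,\ldots,x_N=z$ with $|x_j-x_{j+1}|\sim\rho(x_j)$, so that consecutive pairs lie in the regime of the local comparison. The main obstacle will be producing the sharper exponent $k_0/(1+k_0)$ on this side: the asymmetry with the $k_0$ appearing in the lower bound reflects the one-sidedness of the scaling estimate, and extracting the improved power forces a recursion on how $\rho$ can grow along the chain, using the already-proven lower bound to control the step count.
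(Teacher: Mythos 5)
The paper does not actually prove this lemma --- it is quoted from Shen and Dziuba\'nski--Zienkiewicz --- so I am comparing your proposal against the standard argument there. Your setup is the right one: the quantity $\Phi(x,r)=r^{-(n-2)}\int_{B(x,r)}V$, the scaling estimate $\Phi(x,r_1)\le C(r_1/r_2)^{2-n/q}\Phi(x,r_2)$ for $r_1<r_2$ (H\"older on the small ball, reverse H\"older on the large one), the identity $\Phi(x,\rho(x))=1$, and the local comparison via $B(z,r)\subset B(x,2r)$ are all correct and are exactly Shen's Lemma 1.2 and its use. The gap is in the global lower bound. You write that $B(z,r)\subset B(x,R+r)$ ``together with the scaling estimate which relates $\Phi(x,R+r)$ back to $\Phi(x,\rho(x))=1$'' gives the result --- but the scaling estimate only runs downward in the radius: with $r_1=\rho(x)$, $r_2=R+r>\rho(x)$ it yields $\Phi(x,R+r)\ge c\left((R+r)/\rho(x)\right)^{2-n/q}$, a \emph{lower} bound on $\Phi$ at large radii, whereas to conclude $\Phi(z,r)\le 1$ you need an \emph{upper} bound on $\int_{B(x,R+r)}V$ in terms of $\int_{B(x,\rho(x))}V$. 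That upper bound is the doubling property of the measure $V\,dy$, which is a separate (and nontrivial) consequence of the reverse H\"older condition, not of your scaling estimate; it is the ingredient that, combined with the scaling estimate recentred at $z$, fixes the value of $k_0$ in terms of the doubling exponent and $2-n/q$. As described, your argument does not close.

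For the upper bound you propose a chain argument and flag the exponent $k_0/(1+k_0)$ as an unresolved obstacle. No chain is needed: the upper bound is a purely algebraic consequence of the lower bound with $x$ and $z$ interchanged --- the same swap you already use in the local step. From $\rho(x)\ge C^{-1}\rho(z)\bigl(1+|x-z|/\rho(z)\bigr)^{-k_0}$ one gets $\rho(z)^{1+k_0}\le C\rho(x)\bigl(\rho(z)+|x-z|\bigr)^{k_0}$, and splitting into the cases $\rho(z)\le|x-z|$ and $\rho(z)>|x-z|$ gives $\rho(z)\le C\rho(x)\bigl(1+|x-z|/\rho(x)\bigr)^{k_0/(1+k_0)}$ directly; the asymmetric exponent is just the algebra of solving for $\rho(z)$, not a reflection of the one-sidedness of the scaling estimate. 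So: repair the lower bound by invoking doubling, and replace the chain argument by this inversion.
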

				
						\begin{lemma}\label{sizeheat}
							Let $f$ be a function such that $\rho(\cdot)^{-\alpha}f\in L^\infty(\R^n)$, for some $\alpha>0$. Then,  for every $\ell\in\N\cup\{0\}$ and $M>0$, $|\partial_y^\ell W_y f(x)|\le C_\ell \, M^\LL_\alpha[f] \, \frac{(\rho(x))^{\alpha}}{y^\ell} \left( 1+\frac{y^{1/2}}{\rho(x)}\right)^{-M}$, $x\in\R^n$, $y>0$.
						\end{lemma}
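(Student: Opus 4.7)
The plan is to write $\partial_y^\ell W_y f(x)=\int_{\mathbb R^n}\partial_y^\ell W_y(x,z)f(z)\,dz$, insert the pointwise size $|f(z)|\le M^\LL_\alpha[f]\,\rho(z)^\alpha$, and bound the resulting integral using the kernel estimate from Lemma \ref{Cauchy} together with Lemma \ref{Shenlemma}. First I would note that the heat size condition is automatic here, because $\rho(z)^\alpha$ grows at most polynomially (by Lemma \ref{Shenlemma}) and the Gaussian factor in Lemma \ref{Cauchy} kills any polynomial, so that differentiating under the integral is justified and one may write
\begin{equation*}
|\partial_y^\ell W_y f(x)|\le C_\ell\, M^\LL_\alpha[f]\int_{\mathbb R^n}\frac{e^{-|x-z|^2/(cy)}}{y^{\ell+n/2}}\Bigl(1+\tfrac{\sqrt y}{\rho(x)}+\tfrac{\sqrt y}{\rho(z)}\Bigr)^{-M'}\rho(z)^\alpha\,dz,
\end{equation*}
where $M'>0$ can be chosen as large as we wish thanks to Lemma \ref{Cauchy}.

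The next step is to get rid of the $\rho(z)$-dependence. By Lemma \ref{Shenlemma},
\begin{equation*}
\rho(z)^\alpha\le C\rho(x)^\alpha\Bigl(1+\tfrac{|x-z|}{\rho(x)}\Bigr)^{\alpha k_0/(1+k_0)},
\end{equation*}
and dropping the $\sqrt y/\rho(z)$ term in the bracket only makes the bound bigger, so the integrand is majorised by
\begin{equation*}
C\rho(x)^\alpha\frac{e^{-|x-z|^2/(cy)}}{y^{\ell+n/2}}\Bigl(1+\tfrac{\sqrt y}{\rho(x)}\Bigr)^{-M'}\Bigl(1+\tfrac{|x-z|}{\rho(x)}\Bigr)^{\alpha k_0/(1+k_0)}.
\end{equation*}

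Now the key elementary inequality
\begin{equation*}
1+\tfrac{|x-z|}{\rho(x)}\le \Bigl(1+\tfrac{\sqrt y}{\rho(x)}\Bigr)\Bigl(1+\tfrac{|x-z|}{\sqrt y}\Bigr)
\end{equation*}
(which is just the expansion of the right-hand product) lets me split the $(1+|x-z|/\rho(x))^{\alpha k_0/(1+k_0)}$ factor into a piece $(1+\sqrt y/\rho(x))^{\alpha k_0/(1+k_0)}$, which can be absorbed into the negative power after taking $M'$ large enough that $M'-\alpha k_0/(1+k_0)\ge M$, and a piece $(1+|x-z|/\sqrt y)^{\alpha k_0/(1+k_0)}$, which is harmless against the Gaussian. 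After a change of variables $u=(x-z)/\sqrt y$ the Gaussian integral reduces to the universal constant $\int_{\mathbb R^n}(1+|u|)^{\alpha k_0/(1+k_0)}e^{-|u|^2/c}\,du<\infty$, and the prefactor $\rho(x)^\alpha/y^\ell$ remains outside. Collecting everything produces exactly the claimed bound.

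The only real point to watch is bookkeeping on the exponent of $1+\sqrt y/\rho(x)$: because Shen's inequality costs us a polynomial factor $(1+|x-z|/\rho(x))^{\alpha k_0/(1+k_0)}$, we must start from Lemma \ref{Cauchy} with $M'=M+\alpha k_0/(1+k_0)$ in order to end with the desired decay $M$. Since $M$ in Lemma \ref{Cauchy} is arbitrary, this causes no trouble, and the proof reduces to the standard Gaussian computation above.
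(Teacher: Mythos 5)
Your proposal is correct and follows essentially the same route as the paper: bound the kernel by Lemma \ref{Cauchy} with a large negative exponent, use Lemma \ref{Shenlemma} to trade $\rho(z)^\alpha$ for $\rho(x)^\alpha\bigl(1+|x-z|/\rho(x)\bigr)^{\alpha\lambda}$, and absorb that polynomial factor by splitting it against $\bigl(1+\sqrt{y}/\rho(x)\bigr)^{\alpha\lambda}$ and the Gaussian. The bookkeeping you describe (starting with exponent $M+\alpha k_0/(1+k_0)$) is exactly what the paper does by taking the exponent $N+\alpha\lambda$ in its first display.
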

						\begin{proof}
							By Lemata \ref{Cauchy} and  \ref{Shenlemma}, if $\omega$ denotes a rapidly decaying function and  $w_y(x)=y^{-n/2}w(y^{-1/2}x)$, we have 							\begin{align*}
							&| W_y f(x)|\le 
\int_{\mathbb{R}^n} \omega_y(x-z) \Big(1+\frac{\sqrt{y}}{\rho(x)}\Big)^{-N-\alpha \lambda} |f(z)| \rho(z)^{-\alpha} \rho(x)^\alpha \Big(1+\frac{|x-z|}{\rho(x)}\Big)^{\alpha\lambda} dz\\	
& \lesssim  M^\LL_\alpha[f] \rho(x)^\alpha 
\Big(1+\frac{\sqrt{y}}{\rho(x)}\Big)^{-N} 
\int_{\mathbb{R}^n} \omega_y(x-z) 
\Big(1+\frac{\sqrt{y}}{\rho(x)}\Big)^{-\alpha \lambda} 
\Big(1+\frac{|x-z|}{\rho(x)}\Big)^{\alpha \lambda} dz\\
							&\lesssim    \,M^\LL_\alpha[f] \, \rho(x)^\alpha \left( 1+\frac{\sqrt{y}}{\rho(x)}\right)^{-N}.  
\end{align*}
							 For the derivatives, we proceed in the same way by using Lemma \ref{Cauchy}.
						\end{proof}

	The following Proposition is a direct consequence of Lemma \ref{sizeheat}. Moreover, it corresponds with the 			
 `` if '' part of Theorem \ref{tam2}.
				
				\begin{proposition} Let  $\LL$ be a Schr\"odinger operator with a reverse H\"older class potential and associated function $\rho$.  Let { $\alpha>0$ } and $f$ a measurable function such that    $\rho(\cdot)^{-\alpha}f\in L^\infty(\R^n)$, then $f$ satisfies a {\bf heat size condition} for $\LL$.
				\end{proposition}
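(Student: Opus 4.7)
The statement is presented as ``a direct consequence of Lemma \ref{sizeheat}'', so the plan is essentially bookkeeping: take the hypothesis $\rho(\cdot)^{-\alpha}f\in L^\infty(\R^n)$ and verify both halves of the heat size condition by invoking the bounds already established.

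First I would verify the integrability condition $\int_{\R^n}e^{-|x|^2/y}|f(x)|\,dx<\infty$ for every $y>0$. From $M^\LL_\alpha[f]<\infty$ one has the pointwise bound $|f(x)|\le M^\LL_\alpha[f]\,\rho(x)^\alpha$. Applying Lemma \ref{Shenlemma} with the fixed reference point $z=0$ gives
$$\rho(x)\le C\,\rho(0)\Big(1+\tfrac{|x|}{\rho(0)}\Big)^{k_0/(k_0+1)}\le C'(1+|x|),$$
so $|f(x)|$ grows at most polynomially in $|x|$. Since $e^{-|x|^2/y}(1+|x|)^\alpha$ is integrable on $\R^n$ for each $y>0$, the claim follows.

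Next I would check that $\lim_{y\to\infty}\partial_y^\ell W_y f(x)=0$ for every $\ell\in\N\cup\{0\}$ and every $x\in\R^n$. Lemma \ref{sizeheat} provides, for any $M>0$,
$$|\partial_y^\ell W_y f(x)|\le C_\ell\, M^\LL_\alpha[f]\,\frac{\rho(x)^\alpha}{y^\ell}\left(1+\frac{\sqrt{y}}{\rho(x)}\right)^{-M}.$$
For fixed $x$, $\rho(x)$ is a constant, so: if $\ell\ge 1$ the prefactor $y^{-\ell}$ alone drives the bound to zero as $y\to\infty$ (take $M=0$); if $\ell=0$, pick any $M>0$ and the factor $(1+\sqrt{y}/\rho(x))^{-M}$ tends to zero. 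In either case the limit is $0$.

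There is no real obstacle here beyond applying the two lemmas in the correct order; the only point that deserves attention is controlling $\rho(x)$ globally by a polynomial in $|x|$, which is exactly what the second inequality in Lemma \ref{Shenlemma} delivers. Putting both pieces together yields the heat size condition and completes the proof.
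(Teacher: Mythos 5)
Your proof is correct and is essentially the argument the paper intends: the paper declares the proposition ``a direct consequence of Lemma \ref{sizeheat}'', and the polynomial bound $\rho(x)\le C(1+|x|)$ you extract from Lemma \ref{Shenlemma} at the origin is exactly the observation the authors record in Remark \ref{espaciopola}. The only cosmetic quibble is that Lemma \ref{sizeheat} is stated for $M>0$, so for $\ell\ge 1$ you should fix some $M>0$ rather than ``take $M=0$''; the conclusion is unaffected since the factor is bounded by $1$.
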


\begin{lemma}\label{cambiaryrho}
					Let $\alpha>0$ and $k=[\alpha/2]+1$. Assume that $f$ satisfies the {\bf heat size condition } and (\ref{derivada}), then for every $j,m\in\N\cup \{0\}$ such that $\frac{m}{2}+j\ge k$,  there exists a $C_{m,j}>0$ such that
					$$
					\left\| \frac{\partial_y^j W_yf}{\rho(\cdot)^m}\right\|_{\infty}\le C_m	S^W_\alpha[f] y^{-(\frac{m}{2}+j)+\alpha/2}.
					$$
				\end{lemma}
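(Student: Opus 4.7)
I would split the argument into two cases according to whether $j \ge k = [\alpha/2] + 1$ or $j < k$, using the first case as the engine for the second.

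For $j \ge k$, the plan is to exploit the operator identity $\partial_y^j W_y = (-\LL)^j W_y = W_{y/2}\,(-\LL)^j W_{y/2}$, which gives
\[ \partial_y^j W_y f(x) = \int_{\R^n} W_{y/2}(x,\xi)\, \partial_v^j W_v f(\xi)\big|_{v=y/2}\, d\xi. \]
The $\rho(x)^{-m}$ factor can be absorbed into the heat kernel via \eqref{cotanucleo} with $N = m$, using the algebraic identity
\[ \rho(x)^{-m}\bigl(1 + \tfrac{\sqrt{y}}{\rho(x)}\bigr)^{-m} = (\rho(x) + \sqrt{y})^{-m} \le y^{-m/2}, \]
to obtain $\rho(x)^{-m}\,W_{y/2}(x,\xi) \le C\, y^{-n/2 - m/2}\, e^{-c|x-\xi|^2/y}$. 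Combined with $\|\partial_v^j W_v f|_{v=y/2}\|_\infty \le C\, S^W_\alpha[f]\, y^{-j + \alpha/2}$ (Proposition \ref{subir el $k$}), Gaussian integration over $\xi$, which contributes a factor $y^{n/2}$, delivers the target bound.

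For $j < k$ the constraint $m/2 + j \ge k$ forces $m \ge 2(k-j) \ge 2$. Since $f$ satisfies the heat size condition, $\partial_y^\ell W_y f(x) \to 0$ as $y \to \infty$ for every $\ell$, so I would recover the lower derivatives by iterated integration from infinity,
\[ \rho(x)^{-m}\,\partial_y^j W_y f(x) = (-1)^{k-j}\int_y^\infty\!\!\cdots\!\!\int_{u_{k-j-1}}^\infty \rho(x)^{-m}\,\partial_{u_{k-j}}^k W_{u_{k-j}} f(x)\,du_{k-j}\cdots du_1, \]
and apply the first case with derivative index $k$ to bound the innermost integrand by $C\, u_{k-j}^{-(m/2 + k) + \alpha/2}$. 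The exponents telescope through the $k - j$ integrations down to the target $-(m/2 + j) + \alpha/2$, and convergence at every intermediate step reduces to the single inequality $m/2 + j > \alpha/2$, which is guaranteed by the hypothesis $m/2 + j \ge k > \alpha/2$.

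The main obstacle I anticipate lies in the second case: both justifying that $\partial_y^j W_y f(x)$ can legitimately be recovered as an iterated improper integral from infinity (which requires the vanishing at $\infty$ of all intermediate derivatives, from the heat size condition) and checking that every one of the $k - j$ integrals actually converges — this uses the $\rho^{-m}$ weight to supply the extra $u^{-m/2}$ decay, without which the bare bound $u^{-k + \alpha/2}$ would fail to be integrable at infinity in the borderline case $k = \alpha/2 + 1$. A secondary point of care is the precise choice $N = m$ in \eqref{cotanucleo}, which makes the trade $\rho(x)^{-m} \leftrightarrow y^{-m/2}$ sharp.
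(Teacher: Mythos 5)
Your argument is correct and follows essentially the same route as the paper: for $j\ge k$ split $W_y=W_{y/2}\circ W_{y/2}$, absorb $\rho(x)^{-m}$ into the decay factor $(1+\sqrt{y}/\rho(x))^{-m}$ of the kernel estimate to trade it for $y^{-m/2}$, and for $j<k$ integrate from infinity using the vanishing of the derivatives guaranteed by the heat size condition, with convergence supplied by the extra $u^{-m/2}$. The only cosmetic difference is that the paper places $\ell-k$ derivatives on the outer kernel and only $k$ on $W_{y/2}f$ (so it invokes \eqref{derivada} directly rather than Proposition \ref{subir el $k$}), which is an equivalent bookkeeping choice.
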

				\begin{proof}  For  $\ell\ge k$, by the semigroup property and Lemma \ref{Cauchy} we get that
					\begin{align*}
					\left|\frac{\partial_y^\ell W_yf(x)}{\rho(x)^m}\right|&=\left|\frac{C_\ell }{\rho(x)^m}\int_{\R^n} \partial_v^{\ell-k}W_{v}(x,z)|_{v=y/2}\partial_u^k W_uf(z)|_{u=y/2}dz\right|\\
					&\le \frac{C_\ell \|\partial_u^k W_uf|_{u=y/2}\|_\infty}{\rho(x)^m}\int_{\R^n} \frac{e^{-\frac{|x-z|^2}{cy}}}{y^{n/2+\ell-k}}\left(\frac{\rho(x)}{y^{1/2}} \right)^{m}dz\\
					&\le  C_\ell	S^W_\alpha[f]y^{-(\frac{m}{2}+\ell)+\alpha/2}, \:\; x\in\R^n.
					\end{align*}
						If  $j<k$, since the $y-$derivatives of $W_yf(x)$ tend to zero as $y\to \infty$, we  integrate $\ell-j$ times the previous estimate  and we get the result.

				\end{proof}

The following Proposition corresponds with the ``only if '' part of Theorem \ref{tam2}.
				
				\begin{proposition}\label{tam} 
					Let {$\alpha>0$} and $f$ be a function satisfying the {\bf heat size condition} for $\LL$ and (\ref{derivada}). Then  $\rho(\cdot)^{-\alpha}f\in L^\infty(\R^n)$.%there exists a constant $C>0$ such that $|f(x)| \le C \rho(x)^\alpha$, $x\in\R^n.$
				\end{proposition}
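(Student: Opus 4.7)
The plan is to fix $x\in\R^n$, choose the scale $Y:=\rho(x)^2$, and apply Taylor's formula with integral remainder to the smooth function $g(y):=W_yf(x)$ on $(0,\infty)$, expanded at $y=Y$. For any $y_1\in(0,Y)$,
$$g(y_1)=\sum_{j=0}^{k-1}\frac{g^{(j)}(Y)}{j!}(y_1-Y)^j+\int_Y^{y_1}\frac{(y_1-t)^{k-1}}{(k-1)!}\,g^{(k)}(t)\,dt.$$
I would then let $y_1\to 0^+$. Since $f$ satisfies the heat size condition, Lemma \ref{claim} gives $g(y_1)\to f(x)$ for a.e.\ $x$, while the polynomial sum passes to the limit trivially. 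For the remainder I would invoke dominated convergence: for $0<y_1<t<Y$ we have $|y_1-t|^{k-1}\le t^{k-1}$, and \eqref{derivada} yields $|g^{(k)}(t)|\le S^W_\alpha[f]\,t^{-k+\alpha/2}$, so the integrand is dominated by $\tfrac{S^W_\alpha[f]}{(k-1)!}\,t^{-1+\alpha/2}$, which is integrable on $(0,Y)$ because $\alpha>0$. Keeping the sharper factor $t^{k-1}$ (rather than the crude $Y^{k-1}$) is essential, as it covers the borderline case $\alpha/2\in\mathbb{N}$.

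Passing to the limit produces, a.e.\ $x$,
$$f(x)=\sum_{j=0}^{k-1}\frac{(-Y)^j\,g^{(j)}(Y)}{j!}\;+\;(-1)^{k}\int_0^Y\frac{t^{k-1}}{(k-1)!}\,g^{(k)}(t)\,dt,$$
and I would bound each of the $k+1$ pieces by a multiple of $\rho(x)^\alpha$. For the $j$-th polynomial term I would apply Lemma \ref{cambiaryrho} with the indices $(j,m)=(j,2(k-j))$, noting $m/2+j=k$; this gives $|g^{(j)}(Y)|\le C\,\rho(x)^{2(k-j)}Y^{-k+\alpha/2}=C\rho(x)^{\alpha-2j}$, whence $Y^j|g^{(j)}(Y)|\le C\rho(x)^\alpha$. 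For the remainder I would use \eqref{derivada} directly: $\int_0^Yt^{k-1}|g^{(k)}(t)|\,dt\le C\int_0^Yt^{-1+\alpha/2}\,dt=\tfrac{2C}{\alpha}\rho(x)^\alpha$. Adding the contributions gives $|f(x)|\le C\rho(x)^\alpha$ for a.e.\ $x$, which is exactly $M^\LL_\alpha[f]<\infty$.

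The main technical obstacle is justifying the interchange of limit and integration at $y_1\to 0^+$: one needs both $W_{y_1}f\to f$ pointwise a.e.\ (supplied by Lemma \ref{claim} thanks to the heat size hypothesis) and a single $y_1$-independent integrable dominant. The Taylor weight $t^{k-1}$ precisely counteracts the admissible blow-up $t^{-k+\alpha/2}$ of $g^{(k)}$, leaving only the mild singularity $t^{-1+\alpha/2}$; this is the feature that makes the scheme close in one stroke for every $\alpha>0$, rather than having to split cases according to whether $\alpha/2$ is an integer.
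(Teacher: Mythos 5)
Your argument is correct. It rests on the same two pillars as the paper's proof --- Lemma \ref{claim} to recover $f(x)$ as $\lim_{y\to 0^+}W_yf(x)$ a.e., and Lemma \ref{cambiaryrho} (with $m/2+j=k$) to control the intermediate derivatives at the anchor scale $Y=\rho(x)^2$ --- but the organization is genuinely different. The paper writes $f(x)$ as $W_{\rho(x)^2}f(x)$ minus a first-order integral of $\partial_zW_zf(x)$, which works only when $-k+\alpha/2>-1$, i.e.\ when $\alpha$ is not an even integer; for even $\alpha$ it must integrate a second time, producing logarithms that are then resummed by hand. Your single $k$-th order Taylor expansion at $Y$ with integral remainder absorbs all of this at once: the weight $(t-y_1)^{k-1}\le t^{k-1}$ exactly cancels the admissible blow-up $t^{-k+\alpha/2}$ of $g^{(k)}$, leaving the integrable singularity $t^{-1+\alpha/2}$ uniformly in $\alpha>0$, and the boundary terms $Y^j|g^{(j)}(Y)|/j!$ are each $O(\rho(x)^\alpha)$ by the same application of Lemma \ref{cambiaryrho} that the paper uses for its terms $II$ and the $\partial_vW_vf|_{v=\rho(x)^2}$ contribution. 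The domination argument justifying the passage $y_1\to 0^+$ in the remainder is stated correctly. What your route buys is the elimination of the case split on the parity of $\alpha/2$ and of the logarithmic computations; what it costs is nothing beyond noting that $y\mapsto W_yf(x)$ is $C^k$ on $(0,\infty)$, which is already guaranteed by Lemma \ref{Cauchy}.
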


				\begin{proof}

					By using Lemma \ref{claim}, for a.e. $x$,  we have 
					\begin{align*}
					|f(x)| &\le  \sup_{0< y < \rho(x)^2} |W_yf(x) | \\ &\le 
					\sup_{0< y < \rho(x)^2} |W_yf(x)-W_{\rho(x)^2}f(x) | + 
					|W_{\rho(x)^2}f(x)| \\ &= I+II.
					\end{align*}	
We shall estimate  $I$. 	Let $k = [\alpha/2]+1$. If $\alpha$ is not even, by Lemma \ref{cambiaryrho} with $j=1$ and $m =2(k-1)$  we have that
					\begin{align*} 
					I &\le   \rho(x)^{2(k-1)} \sup_{0< y < \rho(x)^2} \int_y^{\rho(x)^2} \left|\frac{\partial_z W_ z f(x)}{\rho(x)^{2(k-1)}}\right| dz   \le C
					S^W_\alpha[f] \rho(x)^{2(k-1)} \sup_{0< y < \rho(x)^2} \int_y^{\rho(x)^2}  z^{-k+\alpha/2} dz  \\ &\le 
					CS^W_\alpha[f]\rho(x)^{2(k-1)}   \sup_{0< y < \rho(x)^2} ((\rho(x)^2)^{-(k-1)+\alpha/2}- y^{-(k-1)+\alpha/2}) \le CS^W_\alpha[f] \rho(x)^{\alpha}.
					\end{align*}
					When $\alpha$ is even, we  write 
					\begin{align*}
					I&= \sup_{0< y < \rho(x)^2}\left| \int_y^{\rho(x)^2}{\partial_z W_ z f(x)}dz\right|\\&=\sup_{0< y < \rho(x)^2}\left| \int_y^{\rho(x)^2}\left(-\int_{z}^{\rho(x)^2}\partial_u^2 W_ u f(x)du+\partial_v W_v f(x)|_{v=\rho(x)^2}\right)dz\right|.
					\end{align*}		
					By Lemma \ref{cambiaryrho} with $j=2$ and $m= 2(k-2)$, since $k= \alpha/2+1$,  we get
					\begin{align*}
					\Big| &\int_y^{\rho(x)^2}\int_{z}^{\rho(x)^2}\partial_u^2 W_ u f(x)dudz\Big|=\rho(x)^{2(k-2)}\left| \int_y^{\rho(x)^2}\int_{z}^{\rho(x)^2}\frac{\partial_u^2 W_ u f(x)}{\rho(x)^{2(k-2)}}dudz\right|\\
					&\le CS^W_\alpha[f]\rho(x)^{\alpha-2} \int_y^{\rho(x)^2}\int_{z}^{\rho(x)^2}u^{-1}dudz= CS^W_\alpha[f] \rho(x)^{\alpha-2} \int_y^{\rho(x)^2}(\log(\rho(x)^2)-\log z)dz\\
					&= CS^W_\alpha[f] \rho(x)^{\alpha-2}\big[\log(\rho(x)^2)(\rho(x)^2-y)-(\rho(x)^2\log(\rho(x)^2)-\rho(x)^2-y\log y+y)\big]\\
					&=C S^W_\alpha[f] \rho(x)^{\alpha-2}\big[y\log \big(\frac{y}{\rho(x)^2}\big)+\rho(x)^2-y\big]\le CS^W_\alpha[f] \rho(x)^{\alpha}.
					\end{align*}	
					For the second summand of $I$, Lemma \ref{cambiaryrho}, with $j=1$ and $m= 2(k-1)$ applies, 	 so 
					\begin{align*} 
					\sup_{0< y < \rho(x)^2}	(\rho(x)^2-y)|\partial_v W_v f(x)|_{v=\rho(x)^2}|&=	\sup_{0< y < \rho(x)^2}	(\rho(x)^2-y)\rho(x)^{2(k-1)}\frac{|\partial_v W_v f(x)|_{v=\rho(x)^2}|}{\rho(x)^{2(k-1)}}\\&\le 	C S^W_\alpha[f]\sup_{0< y < \rho(x)^2}	(\rho(x)^2-y)\rho(x)^{\alpha} (\rho(x)^2)^{-1}\\
					&\le C S^W_\alpha[f]\rho(x)^{\alpha}.	
					\end{align*}
Regarding $II$, by using Lemma \ref{cambiaryrho} with $j=0$ and $m =2k$ we have \begin{align*}
					II=|W_{\rho(x)^2}f(x)|= \left|\frac{W_{\rho(x)^2}f(x)}{\rho(x)^{2k}} \right|\rho(x)^{2k}\le C S^W_\alpha[f] (\rho(x)^2)^{-k+\alpha/2}\rho(x)^{2k}=C S^W_\alpha[f]\rho(x)^\alpha.
					\end{align*}					
					
				\end{proof}

				\section{Proof of  Theorem  \ref{identities}}\label{comparativa}

\subsection{Some remarks about  the classical Lipschitz spaces}

	In this subsection we define a class of Lipschitz spaces associated to Laplace operator. It will be an auxiliary class for our results about the spaces adapted to the Schr\"odinger operator.  With respect to the classical definitions, see \cite{Stein}, \cite{Taibleson}, the main and crucial difference is that the functions don't need to be bounded.

\begin{definition}\label{defclasica} Let $\alpha >0.$ We define the spaces $\Lambda_{\alpha/2}^{\tilde{W}}$  as
\begin{align*}\Lambda_{\alpha/2}^{\tilde{W}}=&\Big\{f
	:\;  (1+|\cdot|)^{-\alpha} f \in  L^\infty(\R^n) \hbox{ and  }
	 \left\|\partial_y^k	\tilde{W}_y f \right\|_{L^\infty(\mathbb{R}^{n})}\leq C_\alpha y^{-k+\alpha/2},\:  k=[\alpha/2]+1  \Big\}.\end{align*} 
	 Parallel to the linear spaces  $\Lambda_{\alpha/2}^{W}$, we can endow this class with the norm $$\|f\|_{\Lambda_{\alpha/2}^{\tilde{W}}} := \tilde{M}_\alpha [f] + \tilde{S}_\alpha[f], $$
with $\tilde{M}_\alpha [f]=\|(1+|\cdot|)^{-\alpha} f(\cdot)\|_\infty$ and $\tilde{S}_\alpha[f]$ being the infimum of the constants $C_\alpha$ appearing above.
\end{definition} 

\begin{remark}\label{goingtoinfinity} Let $f$ be a function such that  $\tilde{M}_\alpha[f] < \infty$. Then, for every $\ell\in \N\cup\{0\}$, $\partial_y^\ell	\tilde{W}_y f$ is well defined. Observe that 
 \begin{eqnarray*}
\int_{\mathbb{R}^n} \frac{e^{-\frac{|x-z|^2}{cy}}}{y^{n/2}} {|f(z)|} dz \le {C} \int_{\mathbb{R}^n} \frac{e^{-\frac{|x-z|^2}{cy}}}{y^{n/2}}(1+|z|)^\alpha dz. 
\end{eqnarray*}
If $|z|< 2|x|$,  the  last integral is convergent and bounded by $C(1+y^{\alpha/2}+|x|^{\alpha})$. If $|z|> 2|x|$ then  the above integral is less than \begin{eqnarray*}
\int_{\mathbb{R}^n} \frac{e^{-\frac{|z|^2}{cy}}}{y^{n/2}}(1+|z|)^{\alpha}  dz\le C(1+y^{\alpha/2}). 
\end{eqnarray*} 
The same arguments can be used for the derivatives $\partial_y^\ell	\tilde{W}_y f$, $\ell\in\N$.

Moreover, if $m/2+\ell \ge [\alpha/2]+1$, then $\lim_{y \to \infty }\partial_{x_i}^m\partial_y^\ell\tilde{W}_yf(x) =0$, for every $x\in\R^n.$
Indeed, observe that  
\begin{eqnarray*}
\Big|\partial_{x_i}^m\partial_y^\ell\tilde{W}_yf(x) \Big|\le C \int_{\mathbb{R}^n} \frac{e^{-\frac{|x-z|^2}{cy}}|f(z)|}{y^{n/2+m/2+\ell}} dz \le C\int_{\mathbb{R}^n} \frac{e^{-\frac{|x-z|^2}{cy}}}{y^{n/2+m/2+\ell}}(1+|z|)^\alpha dz. 
\end{eqnarray*} 
If $|z|< 2|x|$, the last integral is less than $C(1+y^{\alpha/2}+|x|^\alpha) y^{-m/2-\ell}$. In  the case $|z|> 2|x|$ the integral is less than $C(1+y^{\alpha/2}) y^{-m/2-\ell}.$

\end{remark}

The following Lemma is parallel to Lemma \ref{claim} and follows from the ideas in   \cite{Garrigos}. We sketch  the proof for completeness. 
	
\begin{lemma}\label{claimclasico} Let $f$ be a measurable function such that, for every $y>0$,  $\int_{\mathbb{R}^n} e^{-\frac{|x|^2}{y}} |f(x)| dx < \infty$.  Then, $\lim_{y\rightarrow  0} \tilde{W}_y f(x) = f(x),$ a.e. $x\in\R^n$. Moreover, $\tilde{W}_y f(x)$ belongs to $C^\infty((0,\infty) \times \R^n)$.
	
	\end{lemma}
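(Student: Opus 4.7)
The plan is to mirror the proof of Lemma \ref{claim}, exploiting the fact that the classical Gauss kernel $\tilde W_y(x-z) = (4\pi y)^{-n/2}\exp(-|x-z|^2/(4y))$ has the same Gaussian structure used there. Fix $A\in\N$ and suppose $|x|\le A$. Split
\[
f = f\chi_{\{|z|\le 2A\}} + f\chi_{\{|z|>2A\}} =: f_1 + f_2.
\]
For $f_2$, note that $|z|>2A\ge 2|x|$ implies $|x-z|\ge |z|/2$, so for any $y_0>0$ and $y$ small enough (say $y<y_0/16$),
\[
\tilde W_y(x-z) \le C\,y^{-n/2} e^{-|z|^2/(16y)} \le C\,y^{-n/2} e^{-A^2/(8y)}\, e^{-|z|^2/y_0}.
\]
Using the hypothesis $\int e^{-|z|^2/y_0}|f(z)|\,dz<\infty$, this gives $|\tilde W_y f_2(x)|\to 0$ as $y\to 0$ uniformly for $|x|\le A$. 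For $f_1$, the truncation together with the integrability hypothesis (applied at some fixed $y_0$) shows $f_1\in L^1(\R^n)$; the classical approximation-of-identity result then yields $\tilde W_y f_1(x)\to f_1(x)=f(x)$ for a.e.\ $|x|\le A$. Letting $A\to\infty$ through integers gives the almost-everywhere convergence on all of $\R^n$.

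For the $C^\infty$ claim, I would verify that differentiation under the integral sign is legitimate for all orders and all $(y,x)\in(0,\infty)\times\R^n$. A mixed derivative $\partial_y^\ell\partial_x^\beta \tilde W_y(x-z)$ has the form $P(x-z,y)\,\tilde W_y(x-z)$, where $P$ is a polynomial in $(x-z)$ of degree $|\beta|+2\ell$ with coefficients that are rational functions of $y$. Fix a compact set $K\subset(0,\infty)\times\R^n$ and choose $y_0>0$ so small that $1/(4y)-1/y_0$ is negative and bounded away from zero uniformly for $(y,x)\in K$; then, uniformly on $K$,
\[
\bigl|P(x-z,y)\,\tilde W_y(x-z)\bigr| \le C_K\, e^{-|z|^2/y_0}\,(1+|z|)^{|\beta|+2\ell},
\]
and the hypothesis (applied with a slightly smaller $y_0$, absorbing the polynomial into the exponential) shows that this dominator is integrable against $|f|$. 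Dominated convergence then permits differentiation under the integral and also proves continuity in $(y,x)$ of each derivative, giving $\tilde W_y f\in C^\infty((0,\infty)\times\R^n)$.

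The main obstacle is the bookkeeping on the Gaussian exponent: one must split $e^{-|x-z|^2/(4y)}$ into a factor $e^{-|z|^2/y_0}$ (absorbing the growth of $f$ through the hypothesis) and a remainder factor that decays as $y\to 0$ on compact sets in $x$, while simultaneously ensuring that the polynomial factors arising from differentiation are dominated. Once this splitting is set up carefully and $y$ is taken small enough relative to the auxiliary $y_0$, both the pointwise convergence and the smoothness follow from standard dominated-convergence arguments.
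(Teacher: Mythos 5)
Your proposal is correct and follows essentially the paper's own route: the a.e.\ convergence is the decomposition $f=f\chi_{\{|z|\le 2A\}}+f\chi_{\{|z|>2A\}}$ from Lemma \ref{claim} (here without the perturbation step \eqref{difcalor}, which is not needed since the kernel is already the Gaussian), and the smoothness is the paper's differentiation-under-the-integral argument, which you carry out for all orders where the paper only records the first derivatives in $y$ and $x_i$. One small sign of confusion: for the domination $\bigl|P(x-z,y)\tilde W_y(x-z)\bigr|\le C_K e^{-|z|^2/y_0}(1+|z|)^{|\beta|+2\ell}$ on a compact $K$ you need the auxiliary $y_0$ to be \emph{large} (roughly $y_0\ge 8\sup_K y$, after absorbing the shift by $|x|$), not small, and likewise the polynomial is absorbed by passing to a slightly \emph{larger} exponent parameter; this is harmless because the hypothesis holds for every $y_0>0$.
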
  
	\begin{proof}
	Since
		$$
		\left|\int_{\R^n}\partial_y^\ell \tilde{W}_y(x,y)f(z)dz\right|\le \frac{C}{y^\ell}\int_{\R^n}\frac{e^{-\frac{|x-z|^2}{cy}}}{y^{n/2}}|f(z)|dz,
		$$
		we can {differentiate} $\tilde{W}_yf$ with respect to $y.$
		
		On the other hand, observe that
		$$\left|\int_{\R^n}\partial_{x_i}\tilde{W}_y(x-z)f(z)dz\right|\le \frac{C}{y^{\frac{n+1}{2}}} \int_{\mathbb{R}^n} e^{-\frac{|x-z|^2}{cy}} |f(z)|dz, \: y>0.$$
		Given  $x\in \mathbb{R}^n$ with $|x| < R$, {for some $R>0$}, we have 
		$$e^{-\frac{|x-z|^2}{cy}} |f(z)| \le C \Big(  \chi_{|z|> 2 R} \, e^{{-\frac{|z|^2}{Cy}}}+  \chi_{|z|< 2 R}\,  \Big) |f(z)|,
		$$
 so we can  {differentiate}  $\tilde{W}_yf$ with respect to $x_i$, $i=1,\dots,n$.
	\end{proof}
	
\begin{proposition}\label{subir el $k$cla}
		Let {$\alpha>0$}. A function   $f\in \Lambda_{\alpha/2}^{\tilde{W}}$if, and only if,   for  all $m \ge [\alpha/2]+1$, we have $\| \partial_ y^{m} \tilde{W}_y f\|_ {L^\infty(\mathbb{R}^n)} \le C_m y^{-m+ \alpha/2}$ and  $\tilde{M}_\alpha[f]<\infty.$
	\end{proposition}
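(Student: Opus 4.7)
The proposition is the Gauss-semigroup counterpart of Proposition \ref{subir el $k$}, and the same semigroup-bootstrapping strategy applies, with one direction being essentially trivial. Write $k=[\alpha/2]+1$.

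The reverse implication is immediate: if the derivative estimate holds for every $m\ge k$ and $\tilde M_\alpha[f]<\infty$, then specialising to $m=k$ yields exactly the definition of $\Lambda_{\alpha/2}^{\tilde W}$. So the content of the proposition lies in the forward direction.

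For the forward direction, I would first record the classical analogue of Remark \ref{consecuencia}: from the explicit Gaussian $\tilde W_y(x-z)=(4\pi y)^{-n/2}e^{-|x-z|^2/(4y)}$ (or from Cauchy's formula applied to the holomorphic extension to $\operatorname{Re}\zeta>0$) one has the pointwise bound $|\partial_y^j\tilde W_y(x-z)|\le C_j\, y^{-j-n/2}e^{-|x-z|^2/(cy)}$, and consequently
\[
\int_{\R^n}|\partial_y^j\tilde W_y(x-z)|\,dz\le C_j\,y^{-j},\qquad j\in\N\cup\{0\}.
\]
Granted this, let $f\in\Lambda_{\alpha/2}^{\tilde W}$, so $\tilde M_\alpha[f]<\infty$ and $\|\partial_u^k\tilde W_u f\|_\infty\le C_\alpha u^{-k+\alpha/2}$. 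For $m\ge k$ I would use the semigroup identity $\tilde W_y=\tilde W_{y/2}\tilde W_{y/2}$ and split the $m$ derivatives, concentrating $k$ of them on the inner factor:
\[
\partial_y^m\tilde W_y f(x)=c_m\int_{\R^n}\partial_y^{m-k}\tilde W_{y/2}(x-z)\bigl(\partial_u^k\tilde W_u f\bigr)(z)\big|_{u=y/2}\,dz.
\]
Combining the $L^\infty$ bound on $\partial_u^k\tilde W_u f$ at $u=y/2$ with the integral estimate above gives
\[
|\partial_y^m\tilde W_y f(x)|\le C\,y^{-(m-k)}\cdot (y/2)^{-k+\alpha/2}\le C_m y^{-m+\alpha/2}.
\]

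No real obstacle is foreseen: the only technical points are the justification of differentiating under the integral sign and of applying the semigroup identity to $f$, both of which are guaranteed by Lemma \ref{claimclasico} together with Remark \ref{goingtoinfinity} (smoothness of $\tilde W_y f$ on $(0,\infty)\times\R^n$ and the fact that $\partial_y^\ell\tilde W_y f$ is well defined whenever $\tilde M_\alpha[f]<\infty$). The structure of the proof is therefore an almost mechanical transcription of Proposition \ref{subir el $k$}, with the Schrödinger kernel replaced by the Gauss kernel and Remark \ref{consecuencia} replaced by the estimate displayed above.
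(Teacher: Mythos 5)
Your argument is correct and is essentially the proof the paper intends: the paper simply states that the proof is parallel to that of Proposition \ref{subir el $k$}, which is exactly the semigroup splitting $\tilde{W}_y=\tilde{W}_{y/2}\tilde{W}_{y/2}$ with $k=[\alpha/2]+1$ derivatives on the inner factor and the $L^1$ bound $\int_{\R^n}|\partial_y^j\tilde{W}_y(x-z)|\,dz\le C_jy^{-j}$ on the outer one, justified by Lemma \ref{claimclasico} and Remark \ref{goingtoinfinity}. Your observation that the reverse implication follows by specializing to $m=k$ is also valid for the statement as written.
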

	The proof of this Proposition is parallel to the proof of Proposition \ref{subir el $k$}, we leave the details to the reader.

	\begin{lemma}\label{derivX}
		Let $\alpha>0$ and $k=[\alpha/2]+1$. If $f\in \Lambda_{\alpha/2}^{\tilde{W}}$,  then for every $j,m\in\N\cup \{0\}$ such that $\frac{m}{2}+j\ge k$,  there exists a $C_{m,j}>0$ such that
		$$
		\left\| {\partial_{x_i}^m\partial_y^j \tilde{W}_yf}\right\|_{\infty}\le C_{m,j}	\tilde{S}_\alpha[f]\, y^{-(m/2+j)+\alpha/2}, \text{ for every } i=1\dots,n.
		$$
		Moreover,  for each ${j,m},$ the constant  $C_{m,j}$ is comparable to the constant $C_\alpha$ in Definition \ref{defclasica}.	
		\end{lemma}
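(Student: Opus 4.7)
My plan is to split the argument according to whether $j\ge k$ or $j<k$, handling the first range by a direct semigroup-splitting calculation and reducing the second to the first via a downward induction in $j$ that integrates $\partial_s$ from $y$ up to infinity. The key tool is the elementary identity
\[
\partial_y^{j}\tilde{W}_{y}f=\tilde{W}_{y/2}\bigl(\partial_u^{j}\tilde{W}_{u}f\bigr|_{u=y/2}\bigr),
\]
which follows at once from the semigroup law $\tilde{W}_{y/2}\tilde{W}_{u}=\tilde{W}_{y/2+u}$ by differentiating $j$ times in $u$ (the operator $\tilde{W}_{y/2}$ does not depend on $u$) and then setting $u=y/2$. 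Passing $\partial_{x_i}^{m}$ onto the convolution kernel, I obtain
\[
\partial_{x_i}^{m}\partial_y^{j}\tilde{W}_{y}f(x)=\int_{\R^n}\partial_{x_i}^{m}\tilde{W}_{y/2}(x-z)\,\partial_u^{j}\tilde{W}_{u}f(z)\bigr|_{u=y/2}\,dz.
\]
For $j\ge k$, Proposition \ref{subir el $k$cla} supplies $\|\partial_u^{j}\tilde{W}_{u}f\|_\infty \le C\,\tilde{S}_\alpha[f]\,y^{-j+\alpha/2}$, while the classical Hermite-times-Gaussian bound gives $\int_{\R^n}|\partial_{x_i}^{m}\tilde{W}_{y/2}(x-z)|\,dz\le C\,y^{-m/2}$, uniformly in $i$. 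Multiplying the two bounds produces the desired estimate.

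For the remaining case $j<k$ with $m/2+j\ge k$, I would proceed by downward induction on $j$, the base $j=k$ being covered above for any $m$. In the inductive step, since $m/2+(j+1)>k=[\alpha/2]+1$, Remark \ref{goingtoinfinity} guarantees that $\partial_{x_i}^{m}\partial_s^{j+1}\tilde{W}_{s}f(x)\to 0$ as $s\to\infty$, so I may write
\[
\partial_{x_i}^{m}\partial_y^{j}\tilde{W}_{y}f(x)=-\int_{y}^{\infty}\partial_{x_i}^{m}\partial_s^{j+1}\tilde{W}_{s}f(x)\,ds.
\]
Inserting the inductive bound $|\partial_{x_i}^{m}\partial_s^{j+1}\tilde{W}_{s}f(x)|\le C\,\tilde{S}_\alpha[f]\,s^{-m/2-(j+1)+\alpha/2}$ and integrating yields precisely $C\,\tilde{S}_\alpha[f]\,y^{-m/2-j+\alpha/2}$; the integral converges because the hypothesis $m/2+j\ge k>\alpha/2$ forces the integrand's exponent to be strictly less than $-1$.

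The main obstacle lies in the $j<k$ regime, where the hypothesis of Definition \ref{defclasica} does not directly control $\partial_u^{j}\tilde{W}_{u}f$ in $L^\infty$: both the vanishing at infinity and the $L^1$-in-$s$ integrability that close the induction rely crucially on $m/2+j\ge k>\alpha/2$. The same strict inequality also bounds the number of iterative steps by at most $k$, so that $C_{m,j}$ picks up only finitely many multiplicative factors and remains of the form $C(m,j)\,\tilde{S}_\alpha[f]$, giving the claimed comparability with the constant $C_\alpha$ of Definition \ref{defclasica}.
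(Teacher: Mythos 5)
Your proposal is correct and follows essentially the same route as the paper: for $j\ge k$ the semigroup property splits the operator and the excess derivatives are absorbed by the Gaussian kernel (the paper puts the $j-k$ extra $y$-derivatives on the kernel, while you first upgrade to order $j$ via Proposition \ref{subir el $k$cla}, a cosmetic difference), and for $j<k$ one integrates from $y$ to $\infty$ using the decay of Remark \ref{goingtoinfinity}. One small correction: to justify $\partial_{x_i}^{m}\partial_y^{j}\tilde{W}_{y}f(x)=-\int_{y}^{\infty}\partial_{x_i}^{m}\partial_s^{j+1}\tilde{W}_{s}f(x)\,ds$ the boundary term that must vanish at infinity is the one of order $j$ (not $j+1$), which indeed vanishes by the same remark since $m/2+j\ge k$.
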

		
	\begin{proof}If $j\ge k$, by the semigroup property we get that
		\begin{align*}
		\left|\partial_{x_i}^m\partial_y^j\tilde{W}_yf(x)\right|&=C  \left|\int_{\R^n} \partial_{x_i}^m\partial_v^{j-k}\tilde{W}_{v}(x-z)|_{v=y/2}\partial_u^k \tilde{W}_uf(z)|_{u=y/2}dz\right|\\
		&\le  \frac{C_{m,j} \|\partial_u^k \tilde{W}_uf|_{u=y/2}\|_\infty}{y^{\frac{m}{2}+j-k}} \int_{\R^n}\frac{e^{-\frac{|x-z|^2}{cy}}}{y^{n/2}}dz\\
		&\le C_{m,j}	\tilde{S}_\alpha[f] \, y^{-(\frac{m}{2}+j)+\alpha/2}, \:\; x\in\R^n.
		\end{align*}
 If $j<k$, by proceeding as before we get that $	\left|\partial_{x_i}^m\partial_y^k\tilde{W}_yf(x)\right|\le  C	\tilde{S}_\alpha[f] y^{-(\frac{m}{2}+k)+\alpha/2}$, $ x\in\R^n$, and  we  get the result by integrating the previous estimate $k-j$ times, since  $|\partial_{x_i}^m\partial_y^\ell\tilde{W}_yf(x)|\to 0$ as $y\to \infty$ as far as $\frac{m}{2}+\ell \ge k$, see Remark \ref{goingtoinfinity}. 	
 
 	\end{proof}

			\begin{theorem}\label{nuevoStein}
				Let $0<\alpha < 2.$  Then  $f\in \Lambda_{\alpha/2}^{\tilde{W}}$ if, and only if   $$N_\alpha[f] := \sup_{|z|>0}\frac{\|f(\cdot+z)+f(\cdot-z)-2f(\cdot)\|_\infty}{|z|^\alpha}  <\infty \hbox { and  } \tilde{M}_\alpha [f]=  \|(1+|\cdot|)^{-\alpha} f\|_\infty<\infty.$$
				Moreover, $$\|f \|_{\Lambda_{\alpha/2}^{\tilde{W}} }  \sim N_\alpha[f] + \tilde{M}_\alpha [f]. $$
			\end{theorem}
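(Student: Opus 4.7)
The plan is to prove the two implications separately. Since $0<\alpha<2$, we have $k=[\alpha/2]+1=1$, so membership in $\Lambda_{\alpha/2}^{\tilde{W}}$ reduces to $\tilde M_\alpha[f]<\infty$ together with the single estimate $\|\partial_y\tilde W_y f\|_\infty\le C_\alpha y^{-1+\alpha/2}$.

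\textbf{Direction $(\Rightarrow)$.} Lemma~\ref{claimclasico} gives $\tilde W_s f(x)\to f(x)$ a.e.\ as $s\to 0^+$, and Remark~\ref{goingtoinfinity} (combined with Proposition~\ref{subir el $k$cla}) gives $\partial_s\tilde W_s f(x)\to 0$ as $s\to\infty$. Together with the integrability near zero of $\|\partial_s\tilde W_s f\|_\infty\le C s^{-1+\alpha/2}$, these facts legitimize
\begin{equation*}
f(x)=\tilde W_y f(x)-\int_0^y \partial_s\tilde W_s f(x)\,ds, \qquad y>0,\ \text{a.e. } x.
\end{equation*}
Write $\Delta_z^2 g(x):=g(x+z)+g(x-z)-2g(x)$ and choose $y=|z|^2$. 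A second-order Taylor expansion in $x$ yields $|\Delta_z^2\tilde W_{|z|^2} f(x)|\le|z|^2\,\|\nabla_x^2\tilde W_{|z|^2}f\|_\infty$, and the straightforward extension of Lemma~\ref{derivX} to mixed second spatial partials (same proof, since convolution commutes with any spatial derivative) bounds the right-hand side by $C|z|^2\cdot|z|^{-2+\alpha}=C|z|^\alpha$. The integral correction is bounded pointwise by $4\int_0^{|z|^2}C s^{-1+\alpha/2}\,ds=C'|z|^\alpha$. Hence $N_\alpha[f]\le C\,\tilde S_\alpha[f]$.

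\textbf{Direction $(\Leftarrow)$.} Since $\tilde W_y(w)$ is even in $w$ and $\int_{\mathbb{R}^n}\partial_y\tilde W_y(w)\,dw=\partial_y 1=0$, the bound $|f(x+w)|\le \tilde M_\alpha[f](1+|x|+|w|)^\alpha$ makes $(1+|w|)^\alpha e^{-|w|^2/(cy)}$ an integrable majorant, which justifies differentiation under the integral sign and the symmetrization
\begin{equation*}
\partial_y\tilde W_y f(x)=\tfrac12\int_{\mathbb{R}^n}\partial_y\tilde W_y(w)\bigl[f(x+w)+f(x-w)-2f(x)\bigr]\,dw.
\end{equation*}
Inserting $|\Delta_w^2 f(x)|\le N_\alpha[f]\,|w|^\alpha$ and the standard Gaussian estimate $|\partial_y\tilde W_y(w)|\le C y^{-1-n/2}e^{-|w|^2/(cy)}$, the rescaling $w=\sqrt y\,u$ collapses the integral to $C\,N_\alpha[f]\,y^{-1+\alpha/2}$. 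This gives $\tilde S_\alpha[f]\le C\,N_\alpha[f]$, and tracking constants on both sides yields the norm equivalence.

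\textbf{Main obstacle.} Unlike the classical Stein--Taibleson setting, $f$ is not assumed bounded but only satisfies polynomial growth $|f(x)|\le \tilde M_\alpha[f](1+|x|)^\alpha$. The delicate points are therefore: (i) justifying the representation formula, which requires $\tilde W_s f\to f$ as $s\to 0^+$ and appropriate integrability/decay of $\partial_s\tilde W_s f$ at both endpoints; (ii) legitimizing the symmetrization via differentiation under the integral sign; and (iii) extending Lemma~\ref{derivX} to mixed second-order partials $\partial_{x_i}\partial_{x_j}$. Each of these reduces to Gaussian integrability of $(1+|w|)^\alpha$, so none introduces genuine analytic difficulty beyond careful bookkeeping, and all the needed ingredients are essentially already contained in Lemma~\ref{claimclasico}, Remark~\ref{goingtoinfinity}, and the proof of Lemma~\ref{derivX}.
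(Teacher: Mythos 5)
Your proposal is correct and follows essentially the same route as the paper: the forward direction decomposes the second difference into the second difference of $\tilde{W}_{|z|^2}f$ (controlled by second spatial derivatives via Lemma~\ref{derivX}) plus the three correction terms $\tilde{W}_yf-f=\int_0^y\partial_s\tilde{W}_sf\,ds$, and the converse symmetrizes $\partial_y\tilde{W}_y f$ using the evenness and zero integral of $\partial_y\tilde{W}_y$. The points you flag as delicate (convergence as $y\to 0$, differentiation under the integral, mixed spatial partials) are exactly the ones the paper handles via Lemma~\ref{claimclasico}, Remark~\ref{goingtoinfinity}, and Lemma~\ref{derivX}.
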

		
		\begin{proof}
			Let $x\in\R^n$ and $f\in \Lambda_{\alpha/2}^{\tilde{W}}$. We can write, for every $y>0$, $z\in\R^n$,
				\begin{align*}
				|f(x+z)&+f(x-z)-2f(x)|\le |\tilde{W}_yf(x+z)-f(x+z)|+|\tilde{W}_yf(x-z)-f(x-z)|\\&\quad \quad+2|\tilde{W}_yf(x)-f(x)|+ |\tilde{W}_yf(x+z)-\tilde{W}_yf(x) +\tilde{W}_yf(x-z)-\tilde{W}_yf(x)|.
				\end{align*}
				By using Lemma \ref{claimclasico} we have that
				$$
				|\tilde{W}_yf(x)-f(x)|=\left|\int_0^y \partial_u \tilde{W}_uf(x)du \right|\le C\tilde{S}_\alpha[f]\int_0^y u^{-1+\alpha/2}du =C \tilde{S}_\alpha[f] y^{\alpha/2}.
				$$
		In a parallel way we handle the two first	summands.  Regarding the last sumand,  by using the chain rule and Lemma \ref{derivX} we have that \begin{align*}|\tilde{W}_yf(x+z)-&\tilde{W}_yf(x) +\tilde{W}_yf(x-z)-\tilde{W}_yf(x)|=
				\left|\int_{0}^1\partial_{\theta}(\tilde{W}_yf(x+\theta z)+\tilde{W}_yf(x-\theta z)) d\theta  \right|\\&=
				 \left|\int_0^1(\nabla_u \tilde{W}_yf(u)_{|_{u=x+\theta z}}\cdot z-\nabla_v \tilde{W}_yf(v)_{|_{v=x-\theta z}}\cdot z)d\theta\right|\\&=  \left|\int_0^1\int_{-1}^1\partial_\lambda\nabla_u \tilde{W}_yf(u)_{|_{u=x+\lambda\theta z}}\cdot z\,d\lambda d\theta\right|\\&=\left|\int_0^1\int_{-1}^1\nabla_u^2 \tilde{W}_yf(u)_{|_{u=x+\lambda\theta z}}z\cdot\theta z\,d\lambda d\theta\right|\\&\le C\tilde{S}_\alpha[f] \, y^{-1+\alpha/2}|z|^2, \:\: \end{align*}
				 	Thus, by choosing $y=|z|^2$ we get what we wanted. 
					
					For the converse, we assume that $ N_\alpha[f],\tilde{M}_\alpha [f]<\infty$. Since \newline $\displaystyle  \int_{\mathbb{R}^{n}} \partial_y \tilde{W}_y(z)f(x+z) dz=\int_{\mathbb{R}^{n}} \partial_y \tilde{W}_y(-z)f(x-z)dz=\int_{\mathbb{R}^{n}} \partial_y \tilde{W}_y(z)f(x-z)dz,$ 
				\newline and  $\displaystyle \int_{\R^n}\partial_y \tilde{W}_y(z)dz=0$ we have
				\begin{align*}
				|	\partial_y \tilde{W}_yf(x)| &= \left|\frac{1}{2}\int_{\mathbb{R}^{n}}\partial_y \tilde{W}_y(z)(f(x-z)+f(x+z)-2f(x)) dz\right| \\&\le C N_\alpha[f]\,  \int_{\mathbb{R}^n}\frac{e^{-\frac{|z|^2}{cy}}|z|^\alpha}{y^{\frac{n}{2}+1}}dz\le C N_\alpha[f]\,   y^{-1+\alpha/2}. 
				\end{align*}

		\end{proof}

The following Proposition shows that in the case $0<\alpha < 1$ we recover the classical Lipschitz condition.

		\begin{proposition}\label{Pola} Let $0<\alpha < 1$. If a function $f \in \Lambda^{\tilde{W}}_{\alpha/2}$ then $$\sup_{|z|>0} \frac{\|f(\cdot-z)-f(\cdot) \|_\infty}{|z|^\alpha} < \infty.$$ 

\end{proposition}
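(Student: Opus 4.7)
The plan is the classical splitting via the heat semigroup: for any $y>0$, I would write
$$f(x-z) - f(x) = \bigl[f(x-z) - \tilde{W}_y f(x-z)\bigr] + \bigl[\tilde{W}_y f(x-z) - \tilde{W}_y f(x)\bigr] + \bigl[\tilde{W}_y f(x) - f(x)\bigr],$$
handle each piece uniformly in $x$, and then optimize by choosing $y = |z|^2$. Since $0<\alpha<1$ gives $k = [\alpha/2]+1 = 1$, the hypothesis $f \in \Lambda^{\tilde{W}}_{\alpha/2}$ supplies both $\tilde{M}_\alpha[f]<\infty$ and $\|\partial_y \tilde{W}_y f\|_\infty \le C\,\tilde{S}_\alpha[f]\, y^{-1+\alpha/2}$.

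For the two outer terms, Lemma \ref{claimclasico} justifies $f - \tilde{W}_y f = -\int_0^y \partial_u \tilde{W}_u f\,du$ pointwise a.e., so $\|f - \tilde{W}_y f\|_\infty \le C\int_0^y u^{\alpha/2-1}\,du = C y^{\alpha/2}$, which with $y=|z|^2$ contributes $C|z|^\alpha$. The middle piece reduces, via the fundamental theorem of calculus along the segment from $x-z$ to $x$, to proving the uniform gradient estimate $\|\nabla \tilde{W}_y f\|_\infty \le C y^{(\alpha-1)/2}$; this would yield $|\tilde{W}_y f(x-z) - \tilde{W}_y f(x)| \le C|z| y^{(\alpha-1)/2} = C|z|^\alpha$ under the same choice $y=|z|^2$, and assembling the three pieces finishes the proof.

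The hard part is exactly this gradient bound, because Lemma \ref{derivX} demands $m/2+j\ge k=1$ while the pair $(m,j)=(1,0)$ just fails. The trick I would use is to first establish the pointwise decay $\partial_{x_i}\tilde{W}_y f(x) \to 0$ as $y\to\infty$ for each fixed $x$. Indeed, since $|\partial_{x_i}\tilde{W}_y(x-w)| \le C y^{-1}|x-w|\tilde{W}_y(x-w)$ and $|f(w)|\le \tilde{M}_\alpha[f](1+|w|)^\alpha$, substituting $u=w-x$ and using concavity $(1+|x+u|)^\alpha \le (1+|x|)^\alpha + |u|^\alpha$ gives
$$|\partial_{x_i}\tilde{W}_y f(x)| \le C(1+|x|)^\alpha y^{-1/2} + C y^{(\alpha-1)/2},$$
which vanishes as $y\to\infty$ precisely because $\alpha<1$. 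With this decay secured, one writes $\partial_{x_i}\tilde{W}_y f(x) = -\int_y^\infty \partial_u \partial_{x_i}\tilde{W}_u f(x)\,du$ and applies Lemma \ref{derivX} with $(m,j)=(1,1)$ (for which $m/2+j=3/2\ge 1$) to bound the integrand by $C u^{\alpha/2-3/2}$. The integral converges since $\alpha/2-3/2<-1$, yielding the required $\|\partial_{x_i}\tilde{W}_y f\|_\infty \le C y^{(\alpha-1)/2}$ and closing the argument.
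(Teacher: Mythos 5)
Your proof is correct, but it takes a genuinely different route from the paper's. The paper does not use the semigroup splitting at all at this stage: it first invokes Theorem \ref{nuevoStein} to convert the hypothesis into the Zygmund second-difference bound $N_\alpha[f]<\infty$, and then runs the classical dyadic telescoping argument. Setting $g(t)=f(x+t)-f(x)$, the bound $|g(t)-2g(t/2)|\le C|t|^\alpha$ is iterated to give $|g(z)|\le 2^{-k}|g(2^kz)|+C|z|^\alpha 2^{k(\alpha-1)}\sum_{j}2^{j(1-\alpha)}$, the endpoint term $g(2^kz)$ is controlled by the growth bound $\tilde M_\alpha[f]$ after choosing $k$ so that $|2^kz|$ is comparable to $\max(|x|,1)$, and the restriction $\alpha<1$ enters through the geometric series. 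Your argument instead stays entirely at the semigroup level: the three-term splitting with $y=|z|^2$, the outer terms handled by integrating $\partial_u\tilde W_uf$ from $0$ to $y$, and the middle term by the gradient estimate $\|\nabla\tilde W_yf\|_\infty\le Cy^{(\alpha-1)/2}$. You correctly identify that Lemma \ref{derivX} just misses the pair $(m,j)=(1,0)$, and your fix — establishing the pointwise decay of $\partial_{x_i}\tilde W_yf$ as $y\to\infty$ (which holds for $(1,0)$ precisely because $\alpha<1$, even though the sufficient condition $m/2+j\ge 1$ of Remark \ref{goingtoinfinity} fails) and then writing $\partial_{x_i}\tilde W_yf=-\int_y^\infty\partial_u\partial_{x_i}\tilde W_uf\,du$ with Lemma \ref{derivX} applied at $(m,j)=(1,1)$ — is sound; the integral converges since $\alpha/2-3/2<-1$. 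Your route has the advantage of producing the first-order gradient estimate as a byproduct, which is the natural $\alpha<1$ analogue of Proposition \ref{Steinderivadax}, whereas the paper's argument is more elementary, needing only the already-established second-difference characterization and no differentiation in $x$.
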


\begin{proof}  We assume that $f \in \Lambda^{\tilde{W}}_{\alpha/2}$ with $\|f\|_{\Lambda^{\tilde{W}}_{\alpha/2}} =1$. Let us take a representative of the function $f$.  We want to show that $|f(x+z) -f(x)| \le C|z|^\alpha, $ $x,z\in\R^n$.

Fix $x \in \mathbb{R}^n.$   Assume first that $|x|>1$. In the case  $|z| \ge |x|$, as $\tilde{M}_\alpha[f] < \infty$, we have that $|f(x+z) -f(x) |\le C(1+|x|+|z|)^\alpha \le C |z|^\alpha.$ In the case $|z|< |x|$, we choose a  nonnegative integer $k$ such that   $|x| \le  |2^k z | < 2|x|.$
We define $$g(t) = f(x+t) -f(x), \quad   t \in \mathbb{R}^n.$$

 By hypothesis and Theorem \ref{nuevoStein}, $$|g(t) - 2g(t/2)| =
|f(x+t)+f(x)-2f(x+ t/2)| \le C{|t|}^\alpha.$$
Similarly, $\displaystyle | 2^{j-1} g(t/2^{j-1}) - 2^j g(t/2^j)|\le C2^{j-1} \Big(\frac{|t|}{2^{j-1}}\Big)^\alpha$.
Therefore, adding up we have 
$$|g(t)- 2^k g(t/2^k)|\le C\sum_{j=1}^k 2^{j-1}\Big(\frac{|t|}{2^{j-1}}\Big)^\alpha.$$ 
Now we choose $t= 2^k z$. We have 
\begin{eqnarray*}|g(z) | &\le& \frac{|g(2^k z)|}{2^k} +  C\frac1{2^k}\sum_{j=1}^k 2^{j-1}\Big(\frac{2^k |z|}{2^{j-1}}\Big)^\alpha\\  &\le& C |z|^\alpha 2^{k(\alpha-1)}+ C2^{k(\alpha-1)} |z|^\alpha \sum_{j=0}^k 2^{j(1-\alpha)}  \le C |z|^\alpha.
\end{eqnarray*}
This implies that $|f(x+z)-f(x)| \le C|z|^\alpha.$

If $|x|<1 < |z|$  we can proceed as in the previous case $|x| < |z|$. If $|x|<1$ and $|z|<1$, we choose $k $ such that 
$1\le |2^k z| < 2$. We observe that in this case $|g(2^k z)|\le C$, therefore 
\begin{eqnarray*}|g(z) | &\le& C\frac{|g(2^k z)|}{2^k} +  C\frac1{2^k}\sum_{j=0}^k 2^{j-1}\Big(\frac{2^k |z|}{2^{j-1}}\Big)^\alpha\\  &\le& C  |z|+ 2^{k(\alpha-1)} |z|^\alpha \sum_{j=0}^k 2^{j(1-\alpha)}  \le C |z|^\alpha.
\end{eqnarray*}
Observe that we have used in an essencial way that $0< \alpha <1.$

\end{proof}

\begin{remark}\label{espaciopola}
	Observe that Lemma \ref{Shenlemma} for $x=0$, i.e.
	\begin{equation}\label{shen0}
		\rho(z)\le C\rho(0)\left(1+\frac{|z|}{\rho(0)}\right)^{\lambda}, \text{ for some } 0<\lambda<1,
	\end{equation}
	implies that if $\rho(\cdot)^{-\alpha}f\in L^\infty(\R^n)$, then $(1+|\cdot|)^{-\alpha}f\in L^\infty(\R^n)$. Therefore, for $0<\alpha<1$, Theorem \ref{nuevoStein} and Proposition \ref{Pola} imply that  $\Lambda^\alpha_\LL$ coincides with the space  {introduced in \cite{BonHar}, see \eqref{polabruno}}.
	\end{remark}

	\begin{proposition}\label{Steinderivadax}
	  	Let $1< \alpha<2$, $f\in  \Lambda_{\alpha/2}^{\tilde{W}}$ and assume that for a certain $\rho$ associated to a  Schr\"odinger operator $\LL$, we have    $\rho(\cdot)^{-\alpha}f\in L^\infty(\R^n)$. Then,  for every $i=1,\dots, n$, $\partial_{x_i}f\in \Lambda_{\frac{\alpha-1}{2}}^{\tilde{W}}$ and $\rho(\cdot)^{-(\alpha-1)}\partial_{x_i}f\in L^\infty(\R^n)$. Moreover, $$\| \partial_{x_i} f\|_{\Lambda_{\frac{\alpha-1}{2}}^{\tilde{W}}} \le C\Big(\tilde{S}_\alpha[f] + M_\alpha^{\LL}[f]\Big).$$
	  \end{proposition}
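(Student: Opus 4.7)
The plan is to reduce everything to Lemma \ref{derivX}, exploiting that for the classical heat semigroup $\tilde{W}_y$ the partial derivative $\partial_{x_i}$ commutes with convolution by the Gauss kernel. Set $\beta:=\alpha-1\in(0,1)$, so $[\beta/2]+1=1$ and membership in $\Lambda_{\beta/2}^{\tilde{W}}$ requires only $(1+|\cdot|)^{-\beta}\partial_{x_i}f\in L^\infty(\R^n)$ together with $\|\partial_y\tilde{W}_y(\partial_{x_i}f)\|_\infty\le Cy^{-1+\beta/2}$. The crucial input, from Lemma \ref{derivX} with $m=j=1$, will be
$$|\partial_y\partial_{x_i}\tilde{W}_yf(x)|\le C\tilde{S}_\alpha[f]\,y^{-3/2+\alpha/2},\qquad x\in\R^n,\; y>0,$$
and since the exponent $-3/2+\alpha/2=-1+\beta/2$ exceeds $-1$ (this is where $\alpha>1$ enters), this bound is integrable in $y$ near the origin. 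Once the identity $\partial_y\tilde{W}_y(\partial_{x_i}f)=\partial_y\partial_{x_i}\tilde{W}_yf$ is justified, the required semigroup estimate follows immediately.

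For the pointwise bound on $\partial_{x_i}f$, I would first define $g_i(x):=\lim_{y\to 0^+}\partial_{x_i}\tilde{W}_yf(x)$; the limit exists pointwise by the integrability above, and a short distributional argument (testing against $\phi\in C_c^\infty$, using Lemma \ref{claimclasico} to pass $\tilde{W}_yf\to f$ and the polynomial growth of $f$ coming from $\tilde{M}_\alpha[f]<\infty$ for dominated convergence) identifies $g_i$ with the distributional derivative $\partial_{x_i}f$. Then I would mimic the proof of Proposition \ref{tam} at the natural scale $y_0=\rho(x)^2$:
$$\partial_{x_i}f(x)=\partial_{x_i}\tilde{W}_{\rho(x)^2}f(x)-\int_0^{\rho(x)^2}\partial_s\partial_{x_i}\tilde{W}_sf(x)\,ds=:I(x)+II(x).$$
By Lemma \ref{derivX}, $|II(x)|\le C\tilde{S}_\alpha[f]\int_0^{\rho(x)^2}s^{(\alpha-3)/2}ds\le C'\tilde{S}_\alpha[f]\,\rho(x)^{\alpha-1}$. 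For $I(x)$, I would combine the Gaussian bound $|\partial_{x_i}\tilde{W}_y(x-z)|\le Cy^{-(n+1)/2}e^{-|x-z|^2/(cy)}$ with $|f(z)|\le M_\alpha^\LL[f]\,\rho(z)^\alpha$ and Lemma \ref{Shenlemma} to write $\rho(z)\le C\rho(x)(1+|x-z|/\rho(x))^{\lambda}$ with $\lambda<1$; after the change of variables $u=(x-z)/\rho(x)$ the integral is a convergent absolute constant times $\rho(x)^{\alpha-1}$, giving $|I(x)|\le CM_\alpha^\LL[f]\,\rho(x)^{\alpha-1}$.

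Combining the two estimates yields $\rho(\cdot)^{-(\alpha-1)}\partial_{x_i}f\in L^\infty(\R^n)$ with the desired quantitative control. The bound $(1+|\cdot|)^{-(\alpha-1)}\partial_{x_i}f\in L^\infty(\R^n)$ then follows by invoking \eqref{shen0} (Lemma \ref{Shenlemma} centered at the origin), exactly as in Remark \ref{espaciopola}, which gives $\rho(x)\le C\rho(0)(1+|x|/\rho(0))^{\lambda}$ and hence $\rho(x)^{\alpha-1}\lesssim (1+|x|)^{\alpha-1}$ up to a constant depending on $\rho(0)$. Tracking the constants through this scheme produces the claimed norm inequality. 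The only genuinely delicate step is the first one: justifying that the pointwise limit $g_i$ is the distributional derivative $\partial_{x_i}f$, since $f$ is merely measurable with polynomial growth; however, this is standard once the derivative bounds on $\partial_{x_i}\tilde{W}_yf$ are available, and it is really the hypothesis $\alpha>1$ (ensuring the $y$-integrability near zero) that makes the whole argument possible.
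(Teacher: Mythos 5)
Your proposal is correct and follows essentially the same route as the paper: Lemma \ref{derivX} with $m=j=1$ to get $\|\partial_y\partial_{x_i}\tilde W_yf\|_\infty\le C\tilde S_\alpha[f]\,y^{-3/2+\alpha/2}$ (integrable near $0$ because $\alpha>1$), identification of $\partial_{x_i}f$ as the limit of $\partial_{x_i}\tilde W_yf$, the split at scale $y=\rho(x)^2$ with the boundary term handled by the Gaussian derivative bound, $|f(z)|\le M_\alpha^{\LL}[f]\rho(z)^\alpha$ and Lemma \ref{Shenlemma}, and finally \eqref{shen0} to pass from $\rho(x)^{\alpha-1}$ to $(1+|x|)^{\alpha-1}$. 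The paper realizes the limiting step by showing $\{\partial_{x_i}\tilde W_yf\}_{y>0}$ is uniformly Cauchy as $y\to0$ rather than by your distributional argument, but this is an inessential variation.
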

	  \begin{proof}

	  	We first prove that $\partial_{x_i}f$ exists. 	By Lemma \ref{derivX} we have that $\|\partial_y\partial_{x_i}\tilde{W}_yf\|_\infty\le C\, \tilde{S}_\alpha[f]\,y^{-3/2+\alpha/2}=C\, \tilde{S}_\alpha[f]\,y^{-1+\frac{\alpha-1}{2}}.$
	  	For every $x\in\R^n$ we can write
	  	$$
	  	\partial_{x_i}\tilde{W}_yf(x)=-\int_y^1\partial_u\partial_{x_i}\tilde{W}_uf(x)du+\partial_{x_i}\tilde{W}_yf(x)|_{y=1}.
	  	$$
	  	Therefore, for every $0<y_1<y_2<1$ we have 
	  	\begin{align*}
	  	|\partial_{x_i}\tilde{W}_{y_2}f(x)-\partial_{x_i}\tilde{W}_{y_1}f(x)|&=\left|\int_{y_1}^{y_2}\partial_u\partial_{x_i}\tilde{W}_uf(x)du \right|\\&\le C|y_2^{\frac{\alpha-1}{2}}-y_1^{\frac{\alpha-1}{2}}|\le C |y_2-y_1|^{\frac{\alpha-1}{2}}.
	  	\end{align*}
	  	This means that $\{\partial_{x_i}\tilde{W}_{y}f\}_{y>0}$ is a Cauchy sequence in the $L^\infty$ norm (as $y\to 0$). In addition, as $\tilde{W}_yf\to f$ as $y\to 0$ we get that $\partial_{x_i}\tilde{W}_{y}f$ converges uniformly to $\partial_{x_i}f$.
	  	
	  	On the other hand, since $\rho(\cdot)^{-\alpha}f\in L^\infty(\R^n)$,  {integration by parts} and \eqref{shen0} give \newline 
	$\displaystyle\Big|\int_{\R^n}e^{-\frac{|z|^2}{y}}\partial_{z_i}f(z)dz\Big|=\Big|\int_{\R^n}e^{-\frac{|z|^2}{y}}\frac{2z_i}{y}f(z)dz\Big|\le C\int_{\R^n}\frac{e^{-\frac{|z|^2}{cy}}}{y^{1/2}}|f(z)|dz <\infty$, for every $y>0$.
	  Moreover, since $\tilde{W}_yf$ is a convolution, by Remark \ref{goingtoinfinity}  we have $|\partial_y\tilde{W}_y(\partial_{x_i}f)(x)|=|\partial_y\partial_{x_i}\tilde{W}_yf(x)|\le C\tilde{S}_\alpha[f]\, y^{-(3/2)+\alpha/2}=C\,\tilde{S}_\alpha[f]\, y^{-1+(\alpha-1)/2}$.
	  	
	  	Let us see the size condition for the derivative. By proceeding as in the proof of  Proposition \ref{tam}, we have
	  	\begin{align*}
	  	\frac{|\partial_{x_i}f(x)|}{\rho(x)^{\alpha-1}} &\le  \frac1{\rho(x)^{\alpha-1}}\sup_{0< y < \rho(x)^2} |\tilde{W}_y(\partial_{x_i}f)(x) | \\ &\le 
	  	\frac1{\rho(x)^{\alpha-1}}\sup_{0< y < \rho(x)^2} |(\tilde{W}_y(\partial_{x_i}f)(x)-\tilde{W}_{\rho(x)^2}(\partial_{x_i}f)(x) | + 
	  	\frac1{\rho(x)^{\alpha-1}} |\tilde{W}_{\rho(x)^2}(\partial_{x_i}f)(x)| \\ &= I+II.
	  	\end{align*}
	  	\begin{align*} 
	  	I &\le   \frac1{\rho(x)^{\alpha-1}} \sup_{0< y < \rho(x)^2} \int_y^{\rho(x)^2} |\partial_z \tilde{W}_z (\partial_{x_i}f)(x)| dz   \le 
	  	C\frac{\tilde{S}_\alpha[ f]}{\rho(x)^{\alpha-1}} \sup_{0< y < \rho(x)^2} \int_y^{\rho(x)^2}  z^{-1+\frac{\alpha-1}{2}} dz  \\ &\le 
	  	C \frac{\tilde{S}_\alpha[ f]}{\rho(x)^{\alpha-1}} \sup_{0< y < \rho(x)^2} (\rho(x)^{\alpha-1}- y^{\frac{\alpha-1}{2}}) \le C\tilde{S}_\alpha[ f].
	  	\end{align*}
	  	On the other hand, integration by parts and Lemma \ref{Shenlemma} give
	  	\begin{align*}
	  	II&=\frac1{\rho(x)^{\alpha-1}} \left| \int_{\R^n}\partial_{z_i}\tilde{W}_{\rho(x)^2}(x-z)f(z)dz\right|\le \frac{C\,  M_\alpha^{\LL}[f]}{\rho(x)^{\alpha-1}}\int_{\R^n} \frac{e^{-\frac{|x-z|^2}{c\rho(x)^2}}}{\rho(x)^{n+1}}\rho(z)^\alpha dz\\
	  	&\le		\frac{C\,  M_\alpha^{\LL}[f]}{\rho(x)^{\alpha-1}}\int_{\R^n} \frac{e^{-\frac{|x-z|^2}{c\rho(x)^2}}}{\rho(x)^{n+1}}\rho(x)^\alpha \left( 1+\frac{|x-z|}{\rho(x)}\right)^{\lambda \alpha} dz.
		\end{align*}
		Performing the change of variable $\tilde{z} = (x-z)/\rho(x)$ we get $II\le  C\,  M_\alpha^{\LL}[f].$

	  	Finally, \eqref{shen0} allows us to conclude that  $\tilde{M}_{\alpha-1}[\partial_{x_i} f] < \infty$ and hence $\partial_{x_i}f\in \Lambda_{\frac{\alpha-1}{2}}^{\tilde{W}}$.

	  \end{proof}

			\subsection{Comparison of Lipschitz spaces. $\Delta$  versus   $\LL$.}
	Along this section we shall need the following result that can be found in \cite{DZ}, \cite{Shen}.	 Recall the definition of a rapidly decaying nonnegative function from the proof of Lemma \ref{claim}.							
						\begin{lemma}\label{estV} Let $\omega$ be a rapidly decaying nonnegative function and consider $\omega_y(x)=y^{-n/2}\omega(y^{-1/2}x)$, $y>0$, $x\in\R^n$. There exists a constant $C>0$ such that
					 	$$
					 	\int_{\R^n} V(z)\omega_y(x-z)dz\le C \frac{1}{y}\left(\frac{ y^{1/2}}{\rho(x)}\right)^{2-\frac{n}{q}},		\text{ whenever } y\le \rho(x)^2.				$$
\end{lemma}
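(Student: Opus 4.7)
The plan is a dyadic decomposition of $\R^n$ centred at $x$ at the scale $\sqrt y$, combined with the refined bound, due to Shen, that for $r\le \rho(x)$ one has
\begin{equation*}
\int_{B(x,r)} V(z)\,dz \;\le\; C\, r^{\,n-2}\bigl(r/\rho(x)\bigr)^{2-n/q}.
\end{equation*}
This inequality follows from the self-improvement (Gehring-type) property of the reverse H\"older class $RH_q$, together with the normalization $\int_{B(x,\rho(x))} V \sim \rho(x)^{n-2}$ built into \eqref{critical}. It is strictly sharper than the naive bound $r^{n-2}$ obtained directly from \eqref{critical}, and it is precisely what produces the gain $(\sqrt y/\rho(x))^{2-n/q}$ that the lemma claims.

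Concretely, I would write $\R^n = B(x,\sqrt y)\,\cup\,\bigcup_{j\ge 0} A_j$ with $A_j=\{z:2^j\sqrt y\le |x-z|<2^{j+1}\sqrt y\}$. The rapid decay of $\omega$ gives, for any $N>0$ and $z\in A_j$, the pointwise estimate $\omega_y(x-z)\le C_N\,y^{-n/2}\,2^{-jN}$. Summing over annuli yields
\begin{equation*}
\int_{\R^n} V(z)\,\omega_y(x-z)\,dz \;\le\; C_N\,y^{-n/2}\sum_{j\ge 0} 2^{-jN}\int_{B(x,\,2^{j+1}\sqrt y)} V(z)\,dz.
\end{equation*}
For those indices $j$ with $2^{j+1}\sqrt y\le \rho(x)$ (which exist since $y\le \rho(x)^2$), applying Shen's bound above and rearranging powers of $2$ and $y$ turns the $j$-th summand into a constant multiple of
\begin{equation*}
\frac{1}{y}\Bigl(\frac{\sqrt y}{\rho(x)}\Bigr)^{2-n/q}\,2^{\,j(n-n/q-N)}.
\end{equation*}
Choosing $N>n-n/q$ (permissible by the rapid decay of $\omega$) makes the geometric series convergent, producing exactly the claimed upper bound. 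For the remaining indices with $2^{j+1}\sqrt y>\rho(x)$, one invokes the standard polynomial extension of the Shen estimate above $\rho(x)$; the rapid-decay factor $2^{-jN}$ makes those terms subdominant with respect to the bound just obtained.

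The argument is essentially routine once the refined $\int_B V$ bound is in hand, and no step presents a genuine obstacle. The only substantive point is invoking the self-improvement of the reverse H\"older class: without it one would only obtain $\int V\,\omega_y\le C/y$, corresponding to the exponent $0$ in place of $2-n/q$, and miss the decay factor $(\sqrt y/\rho(x))^{2-n/q}$ that is the whole content of the lemma.
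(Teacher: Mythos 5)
The paper itself gives no proof of this lemma: it is quoted from \cite{DZ} and \cite{Shen}, and your argument is essentially the standard one found there. Your reconstruction is correct: the dyadic decomposition of $\R^n$ at scale $\sqrt{y}$, the rapid decay of $\omega$ on the annuli, and the refined bound $\int_{B(x,r)}V\le C\,r^{n-2}(r/\rho(x))^{2-n/q}$ for $r\le\rho(x)$ combine exactly as you describe, and your bookkeeping of the exponents (the factor $2^{j(n-n/q-N)}$ and the choice $N>n-n/q$, together with the polynomial growth of $r^{-(n-2)}\int_{B(x,r)}V$ above $\rho(x)$ for the outer annuli) is right. One correction of attribution: the key inequality does not require the Gehring self-improvement of the class $RH_q$. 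It follows directly from H\"older's inequality and the reverse H\"older hypothesis \eqref{reverseHolder} applied on the larger ball: for $r\le R$,
\begin{equation*}
\int_{B(x,r)}V\le |B(x,r)|^{1-1/q}\Big(\int_{B(x,R)}V^q\Big)^{1/q}\le C\Big(\frac{r}{R}\Big)^{n-n/q}\int_{B(x,R)}V,
\end{equation*}
and dividing by $r^{n-2}$ turns the exponent $n-n/q$ into $2-n/q$; taking $R=\rho(x)$ and using $\rho(x)^{-(n-2)}\int_{B(x,\rho(x))}V\le 1$, which follows from the definition \eqref{critical} of the critical radius as a supremum, gives precisely the bound you need (this is Lemma 1.2 in \cite{Shen}). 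Self-improvement, or rather the doubling of the measure $V\,dz$, is only relevant for the growth estimate above the scale $\rho(x)$, which you invoke correctly for the far annuli and which is harmless there since $N$ may be taken arbitrarily large.
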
		
				
			\begin{theorem}\label{comparacion}
				Let $0< \alpha \le 2-n/q$, and a function $f$ such that   $\rho(\cdot)^{-\alpha}f(\cdot) \in L^\infty(\R^n)$. Then, $\|\partial_t \tilde{W}_tf-\partial_tW_tf\|_\infty\le C\, M_\alpha^{\LL}[f]\, t^{-1+\alpha/2}$. 			\end{theorem}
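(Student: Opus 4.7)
The plan is to start from the perturbation (Duhamel) formula
$$\tilde W_t f - W_t f = \int_0^t W_s\bigl[V\,\tilde W_{t-s}f\bigr]\,ds,$$
which is obtained by integrating the identity $\partial_s(W_s\tilde W_{t-s}f) = -W_s(V\tilde W_{t-s}f)$ from $s=0$ to $s=t$; this in turn uses only $\partial_s W_s = -\LL W_s$, $\partial_s\tilde W_{t-s} = -\Delta\tilde W_{t-s}$, and the commutation $\LL W_s = W_s\LL$. The hypothesis $\rho(\cdot)^{-\alpha}f\in L^\infty$ combined with Lemma \ref{Shenlemma} forces $|f(z)|\lesssim(1+|z|)^\alpha$, which makes all the integrals absolutely convergent.

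Differentiating in $t$, noting that $t$ appears both in the upper limit and inside $\tilde W_{t-s}$, yields
$$\partial_t\bigl(\tilde W_tf - W_tf\bigr)(x) = W_t[Vf](x) + \int_0^t W_s\bigl[V\,\Delta\tilde W_{t-s}f\bigr](x)\,ds.$$
The key feature of this form (as opposed to moving the derivative onto $W_{t-s}$) is that $V$ is always followed by the Schr\"odinger semigroup $W_s$, whose kernel enjoys both the Gaussian bound and the extra factor $(1+\sqrt s/\rho(x))^{-N}$ from Lemma \ref{Cauchy}. This avoids ever evaluating $V$ pointwise, which is essential since $V$ is only a reverse-H\"older weight.

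To estimate $W_t[Vf](x)$, I would bound $|f(z)|\le M^{\LL}_\alpha[f]\rho(z)^\alpha$, use Lemma \ref{Shenlemma} to rewrite this as $M^{\LL}_\alpha[f]\rho(x)^\alpha(1+|x-z|/\rho(x))^{\alpha\lambda}$, and then apply Lemma \ref{estV} to the resulting integral $\int V(z)\omega_t(x-z)\,dz$. In the regime $\sqrt t\le\rho(x)$ this produces a factor $t^{-1}(\sqrt t/\rho(x))^{2-n/q}$; since by hypothesis $\alpha\le 2-n/q$, the exponent of $\rho(x)$ is nonpositive and we may majorise $\rho(x)^{\alpha-(2-n/q)}\le t^{(\alpha-(2-n/q))/2}$, which combines to give exactly $t^{-1+\alpha/2}$. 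In the regime $\sqrt t>\rho(x)$ the extra factor $(1+\sqrt t/\rho(x))^{-N}$ from Lemma \ref{Cauchy} provides the decay needed to conclude after a standard dyadic decomposition.

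For the integral term I would estimate $|\Delta\tilde W_{t-s}f(z)|$ directly from the classical Gauss kernel and the polynomial growth of $f$, producing a bound of order $\rho(z)^\alpha(t-s)^{-1}$ in the relevant range, and then repeat the $V$-integration argument above on $W_s[V\,\cdot\,](x)$, integrating the result in $s\in(0,t)$. The constraint $\alpha\le 2-n/q$ is again what makes the $s$-integrand integrable and the final $t$-power come out to $t^{-1+\alpha/2}$. The main obstacle throughout is the bookkeeping of the two regimes $\sqrt t\lessgtr\rho(x)$ (and, inside the $s$-integral, $\sqrt s\lessgtr\rho(x)$ and $\sqrt{t-s}\lessgtr\rho(z)$), and making sure the powers of $\rho(x)$ introduced by Lemma \ref{Shenlemma} cancel cleanly against those coming from Lemma \ref{estV} rather than accumulating; the Duhamel structure itself is otherwise routine.
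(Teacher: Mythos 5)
Your Duhamel formula $\tilde W_t f - W_t f = \int_0^t W_s[V\,\tilde W_{t-s}f]\,ds$ is a valid variant of the Kato--Trotter formula the paper uses, and your treatment of the boundary term $W_t[Vf]$ (Shen's lemma to trade $\rho(z)^\alpha$ for $\rho(x)^\alpha$, then Lemma \ref{estV}, then $\rho(x)^{\alpha-(2-n/q)}\le t^{(\alpha-(2-n/q))/2}$) is exactly the paper's estimate of its term $E$. But the remaining term $\int_0^t W_s[V\,\Delta\tilde W_{t-s}f]\,ds$ has a genuine, non-repairable-as-stated singularity at $s=t$. Since $f$ is only assumed to satisfy the size condition (no smoothness), the best you can say is $|\Delta\tilde W_{t-s}f(z)|\lesssim (t-s)^{-1}(1+|z|+\sqrt{t-s})^{\alpha}$, and the gain supplied by Lemma \ref{estV} when you integrate $V$ against the kernel of $W_s$ is a power of $\sqrt s$, not of $\sqrt{t-s}$; near $s=t$ that gain is useless and your $s$-integrand is of order $(t-s)^{-1}$, which is not integrable. (For the same reason the differentiation under the integral sign that produced this formula is itself unjustified.)

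The missing idea is to split the $s$-integral at $t/2$ and distribute the $t$-derivative so that it always falls on the semigroup factor whose time parameter is bounded below by $t/2$, while the short-time factor is always the one contracted with $V$ so that Lemma \ref{estV} supplies the integrable weight. Concretely, the paper writes
\begin{equation*}
\partial_t(\tilde W_tf-W_tf)=\int_0^{t/2}\partial_t\tilde W_{t-s}\,V\,W_sf\,ds+\int_{t/2}^{t}\tilde W_{t-s}\,V\,\partial_sW_sf\,ds+\tilde W_{t/2}VW_{t/2}f,
\end{equation*}
and in the second term the $V$-integration against $\tilde W_{t-s}$ produces $(t-s)^{-1}(\sqrt{t-s}/\rho(x))^{2-n/q}$, which is integrable on $(t/2,t)$ and, using $\alpha\le 2-n/q$, sums to $t^{-1+\alpha/2}$. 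You would need the analogous integration by parts in $s$ on $(t/2,t)$ in your formulation. A secondary point: for $t\ge\rho(x)^2$ the paper does not use the perturbation formula at all, but bounds $|\partial_t\tilde W_tf(x)|$ and $|\partial_tW_tf(x)|$ separately from the kernel estimates; your proposed dyadic treatment of $W_t[Vf]$ in that regime is vaguer and would require a doubling estimate for $V\,dx$ on balls larger than $\rho(x)$ that the paper never needs.
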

			\begin{proof} Let $t>0$ and $x\in\R^n$. The existence of the derivatives $\partial_t \tilde{W}_tf(x)$ and $\partial_t W_tf(x)$ follows from  Lemma  \ref{sizeheat} and Remark  \ref{goingtoinfinity}. 				We analyze first the case  $t\le \rho(x)^2$. As a consequence of the Kato-Trotter formula,
				$$\tilde{W}_t(x-y)-W_t(x,y) =\int_0^t \int_{\mathbb{R}^n}\tilde{W}_{t-s}(x-z)V(z)W_s(z,y) dzds,$$
				see {\cite{DZ}},  we have the following identity:
				\begin{eqnarray*}
				\nonumber \partial_t (\tilde{W}_tf -  W_tf) &=&  \int_0^{t/2} \frac{\partial }{ \partial t} \tilde{W}_{t-s} V W_sf ds +
				\int_{t/2}^t  \tilde{W}_{t-s} V \frac{\partial}{\partial s} W_sf ds  +  \tilde{W}_{t/2}V W_{t/2}f \\
				&=& A+B+E.
				\end{eqnarray*}	 
				Assume $t\le \rho(x)^2$, then by Lemmata   \ref{Cauchy},	\ref{Shenlemma} and 	\ref{estV}, if $\omega$ denotes a rapidly decaying function and  $\omega_t(x)=t^{-n/2}\omega(t^{-1/2}x)$,  we have	
				\begin{align*}
				|A| &\lesssim  \int_0^{t/2} \int_{\mathbb{R}^n}\int_{\mathbb{R}^n} t^{-1} \omega_t(x-z)V(z) \omega_s(z-y) |f(y) | \rho(y)^{-\alpha}\rho(x)^\alpha \Big(1+\frac{|x-y|}{\rho(x)}\Big)^{\alpha\lambda}\,dy\,dz\,ds\\
				&\lesssim M_\alpha^{\mathcal{L}}[f]\rho(x)^\alpha t^{-1}\\
				&\quad \times 
				\int_0^{t/2} \int_{\mathbb{R}^n}\int_{\mathbb{R}^n} \omega_t(x-z)V(z) \omega_s(z-y) \Big(1+\frac{|x-z|}{\sqrt{t}}\Big)^{\alpha\lambda}\Big(1+\frac{|z-y|}{\sqrt{s}}\Big)^{\alpha\lambda}\,dy\,dz\,ds\\
				&\lesssim M_\alpha^{\mathcal{L}}[f]\rho(x)^\alpha t^{-1} \int_0^{t/2} t^{-1} \Big(\frac{\sqrt{t}}{\rho(x)}\Big)^{2-\frac{n}{q}}\, ds \\ &\lesssim 
				M_\alpha^{\mathcal{L}}[f]\rho(x)^\alpha t^{-1}  \Big(\frac{\sqrt{t}}{\rho(x)}\Big)^\alpha,				\end{align*}
				where in the last inequality we use that $0 < \alpha \le 2-\frac{n}{q}$ and $t \le \rho(x)^2.$
				
				Similarly we proceed with for $B$. Again, by Lemmata   \ref{Cauchy},	\ref{Shenlemma} and 	\ref{estV}, we obtain		
				
				\begin{align*}
				|B |&\lesssim    \int_{t/2}^{t} \int_{\mathbb{R}^n}\int_{\mathbb{R}^n}  \omega_{t-s}(x-z)V(z)s^{-1} \omega_s(z-y) |f(y) | \rho(y)^{-\alpha}\rho(x)^\alpha\Big(1+\frac{|x-y|}{\rho(x)}\Big)^{\alpha\lambda}\,dy\,dz\,ds\\
				&\lesssim M_\alpha^{\mathcal{L}}[f]\rho(x)^\alpha t^{-1}\\
				&\quad \times 
				\int_{t/2}^t \int_{\mathbb{R}^n}\int_{\mathbb{R}^n} \omega_{t-s}(x-z)V(z) \omega_t(z-y) \Big(1+\frac{|x-z|}{\sqrt{t-s}}\Big)^{\alpha\lambda}\Big(1+\frac{|z-y|}{\sqrt{t}}\Big)^{\alpha\lambda}\,dy\,dz\,ds\\
				&\lesssim M_\alpha^{\mathcal{L}}[f]\rho(x)^\alpha t^{-1} \int_{t/2}^{t} (t-s)^{-1} \Big(\frac{\sqrt{t-s}}{\rho(x)}\Big)^\alpha\, ds \\ &\lesssim 
				M_\alpha^{\mathcal{L}}[f]\rho(x)^\alpha t^{-1+\alpha/2}.			\end{align*}

				Similarly, by the same arguments we have 
				\begin{align*}
				|E| &\lesssim \int_{\mathbb{R}^n} \int_{\mathbb{R}^n} \tilde{W}_{t/2}(x-z) V(z)W_{t/2}(z,y) |f(y)| \rho(y)^{-\alpha} \rho(x)^\alpha
				\Big(1+\frac{|x-y|}{\rho(x)}\Big)^{\lambda \alpha} \,dy\, dz \\
				&\lesssim M_\alpha^{\mathcal{L}}[f]\rho(x)^\alpha \int_{\mathbb{R}^n}\tilde{W}_{t/2} (x-z)V(z)\Big(1+\frac{|x-z|}{\sqrt{t}} \Big)^{\lambda \alpha} dz \\
				&\lesssim M_\alpha^{\mathcal{L}}[f]\rho(x)^\alpha t^{-1} \Big(\frac{\sqrt{t}}{\rho(x)} \Big)^\alpha\\
				&\lesssim M_\alpha^{\mathcal{L}}[f] t^{-1+\alpha/2}.
				\end{align*}
		 		
				If $t \ge \rho(x)^2$, then
				\begin{align*}
				|\partial_t& \tilde{W}_tf(x)-{\partial_t}{W}_tf(x)| \lesssim \int_{\mathbb{R}^n} t^{-1} \omega_t(x-y) |f(y)| \rho(y)^{-\alpha} \rho(x)^\alpha
				\Big(1+\frac{|x-y|}{\rho(x)}\Big)^{\lambda\alpha}dy \\& \lesssim   M_\alpha^{\LL}[f] t^{-1}\,\Big[\int_{|x-y|<\rho(x)} \omega_t(x-y) \rho(x)^\alpha \,dy + \int_{|x-y| >\rho(x)} \omega_t(x-y) \Big(\frac{|x-y|}{\sqrt{t}}\Big)^\alpha t^{\alpha/2} \, dy \Big] \\ & \lesssim  M_\alpha^{\LL}[f] t^{-1} (\rho(x)^\alpha+ t^{\alpha/2}) \lesssim M_\alpha^{\LL}[f] t^{-1+\alpha/2}.
				\end{align*}			\end{proof}

			\subsection{Proof of Theorem \ref{identities}}
				As a  consequence of the previous results we have the following theorem.
			\begin{theorem}\label{Holdercomparacion}
				For $0<\alpha \le  2-n/q$, a measurable function  $f \in \Lambda_{\alpha/2}^W$  if, and only if, $f \in \Lambda_{\alpha/2}^{\tilde{W}}$  and $\rho(\cdot)^{-\alpha} f(\cdot) \in L^\infty(\R^n).$ 
			\end{theorem}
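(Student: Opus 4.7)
Since $q>n/2$ we have $2-n/q<2$, so the hypothesis $0<\alpha\le 2-n/q$ forces $k=[\alpha/2]+1=1$. Consequently, for both spaces the relevant derivative estimate is only
\[
\|\partial_y W_y f\|_\infty\le C\,y^{-1+\alpha/2}\qquad\text{or}\qquad \|\partial_y \tilde{W}_y f\|_\infty\le C\,y^{-1+\alpha/2}.
\]
The whole proof will consist in transferring this single estimate between the two semigroups by means of Theorem~\ref{comparacion}, once the size/integrability side-conditions have been matched.

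\textbf{($\Rightarrow$)} Assume $f\in \Lambda_{\alpha/2}^{W}$. First, Proposition~\ref{tam} (the ``only if'' half of Theorem~\ref{tam2}) yields $M^\LL_\alpha[f]=\|\rho(\cdot)^{-\alpha}f\|_\infty<\infty$. Then, as spelled out in Remark~\ref{espaciopola}, Lemma~\ref{Shenlemma} with $x=0$ gives $\rho(z)\le C\rho(0)(1+|z|/\rho(0))^\lambda$, so $\rho(\cdot)^{-\alpha}f\in L^\infty(\R^n)$ implies $\tilde{M}_\alpha[f]=\|(1+|\cdot|)^{-\alpha}f\|_\infty<\infty$. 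With this size control available, Theorem~\ref{comparacion} applies and gives
\[
\|\partial_t \tilde{W}_t f-\partial_t W_t f\|_\infty\le C\,M^\LL_\alpha[f]\,t^{-1+\alpha/2}.
\]
Combining this with the assumed bound $\|\partial_t W_t f\|_\infty\le C\,S^W_\alpha[f]\,t^{-1+\alpha/2}$ and the triangle inequality produces the classical estimate $\|\partial_t\tilde{W}_t f\|_\infty\le C\,t^{-1+\alpha/2}$, which together with $\tilde{M}_\alpha[f]<\infty$ is exactly the definition of $\Lambda_{\alpha/2}^{\tilde{W}}$.

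\textbf{($\Leftarrow$)} Assume now $f\in \Lambda_{\alpha/2}^{\tilde{W}}$ with $\rho(\cdot)^{-\alpha}f\in L^\infty(\R^n)$. The hypothesis immediately gives $M^\LL_\alpha[f]<\infty$; what is left is to verify the heat size condition for $\LL$ and then invert the argument above. Since $|f(x)|\le M^\LL_\alpha[f]\,\rho(x)^\alpha$ and $\rho$ has the polynomial growth \eqref{shen0}, the integral $\int e^{-|x|^2/y}|f(x)|\,dx$ is finite for every $y>0$; moreover Lemma~\ref{sizeheat} yields $|\partial_y^\ell W_y f(x)|\le C\rho(x)^\alpha y^{-\ell}(1+y^{1/2}/\rho(x))^{-M}$, whose right-hand side tends to $0$ as $y\to\infty$ for any $M>\alpha$, so the full heat size condition holds. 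Theorem~\ref{comparacion} is thus again applicable and gives
\[
\|\partial_t \tilde{W}_t f-\partial_t W_t f\|_\infty\le C\,t^{-1+\alpha/2},
\]
while the hypothesis $f\in \Lambda_{\alpha/2}^{\tilde{W}}$ supplies $\|\partial_t\tilde{W}_t f\|_\infty\le C\,t^{-1+\alpha/2}$. The triangle inequality then yields \eqref{derivada} for $k=1$, so $f\in \Lambda_{\alpha/2}^{W}$.

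I don't expect any real obstacle here: all the technical work has already been done in Lemma~\ref{Shenlemma}, Lemma~\ref{sizeheat}, Proposition~\ref{tam}, Remark~\ref{espaciopola} and especially Theorem~\ref{comparacion}. The only point requiring a small amount of care is verifying the heat size condition in the $(\Leftarrow)$ direction, i.e.\ checking both the local integrability at each $y>0$ and the decay as $y\to\infty$ of $\partial_y^\ell W_y f$; both follow cleanly from Lemma~\ref{sizeheat} combined with the polynomial control on $\rho$ given by Lemma~\ref{Shenlemma}.
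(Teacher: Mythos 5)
Your proof is correct and takes essentially the same route as the paper, which states Theorem \ref{Holdercomparacion} as ``a consequence of the previous results'' without further detail: both directions reduce, exactly as you do, to Theorem \ref{comparacion} plus the triangle inequality (noting $k=[\alpha/2]+1=1$ in this range), with the size conditions matched via Proposition \ref{tam}, Lemma \ref{Shenlemma}/Remark \ref{espaciopola}, and the heat size condition supplied by the proposition following Lemma \ref{sizeheat}. Your write-up simply makes explicit the assembly the paper leaves to the reader.
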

			
			This result together with Theorem \ref{nuevoStein} is the last step of the proof of Theorem \ref{identities}.

	  \section{Applications. Proofs of Theorems \ref{Schau}, \ref{Holderestimates}, \ref{TRiesz}, and \ref{multiplicador}. }\label{pruebaaplicaciones}
	  
	\begin{lemma}\label{Potencial} Let  $\beta >0$ and  $\mathcal{T}_\beta$ be either the operator $(Id+\LL)^{-\beta/2}$ or the operator $ \LL^{-\beta/2}$. If $f$ is a function such that  $\rho(\cdot)^{-\alpha}f\in L^\infty(\R^n)$ for some $\alpha>0$, then  $\mathcal{T}_\beta f(x) $ is well-defined and satisfies $$M_{\alpha+\beta}^\LL[ \mathcal{T}_\beta f] \le C M_\alpha^{\LL}[f].$$ 

Moreover if $f \in L^\infty(\mathbb{R}^n)$ then $\mathcal{T}_\beta f(x) $ is well defined and 
$$M_{\beta}^{\LL}[ \mathcal{T}_\beta f] \le C \|f\|_\infty.$$ 
\end{lemma}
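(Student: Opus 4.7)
The strategy is to reduce everything to the pointwise heat-semigroup estimate of Lemma~\ref{sizeheat}, which for $\rho(\cdot)^{-\alpha}f\in L^\infty(\R^n)$ says
\[
|W_t f(x)| \le C\, M^{\LL}_\alpha[f]\, \rho(x)^\alpha \Big(1+\tfrac{\sqrt{t}}{\rho(x)}\Big)^{-M}, \qquad x\in\R^n,\; t>0,
\]
for any $M>0$ (with the constant depending on $M$). Given this, both the well-definedness and the size bound on $\mathcal{T}_\beta f$ reduce to integrating against the weight $t^{\beta/2-1}\,dt$ (in the fractional integral case) or $e^{-t}t^{\beta/2-1}\,dt$ (in the Bessel case).

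First, I would verify that $\mathcal{T}_\beta f(x)$ is well-defined. Near $t=0$ the factor $t^{\beta/2-1}$ is integrable since $\beta>0$, and the integrand is bounded by $C\,M^{\LL}_\alpha[f]\rho(x)^\alpha t^{\beta/2-1}$. For large $t$, I pick $M>\beta$ in Lemma~\ref{sizeheat}; then $|W_tf(x)|t^{\beta/2-1}\le C\,M^{\LL}_\alpha[f]\rho(x)^{\alpha+M} t^{(\beta-M)/2-1}$, which is integrable at infinity. The Bessel case is even easier because of the $e^{-t}$ decay.

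Next, for the size estimate with $\mathcal{T}_\beta=\LL^{-\beta/2}$, I split the $t$-integral at $\rho(x)^2$. For $0<t<\rho(x)^2$, I bound the decay factor by $1$ and get a contribution of order
\[
M^{\LL}_\alpha[f]\,\rho(x)^\alpha \int_0^{\rho(x)^2} t^{\beta/2-1}\,dt \;\sim\; M^{\LL}_\alpha[f]\,\rho(x)^{\alpha+\beta}.
\]
For $t>\rho(x)^2$, I bound $(1+\sqrt{t}/\rho(x))^{-M}\le (\sqrt{t}/\rho(x))^{-M}$ with $M$ chosen large (e.g.\ $M>\beta$), giving
\[
M^{\LL}_\alpha[f]\,\rho(x)^{\alpha+M} \int_{\rho(x)^2}^\infty t^{(\beta-M)/2-1}\,dt \;\sim\; M^{\LL}_\alpha[f]\,\rho(x)^{\alpha+\beta}.
\]
Summing gives $|\LL^{-\beta/2}f(x)|\le C\,M^{\LL}_\alpha[f]\,\rho(x)^{\alpha+\beta}$, i.e.\ $M^{\LL}_{\alpha+\beta}[\LL^{-\beta/2}f]\le C\,M^{\LL}_\alpha[f]$. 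The Bessel case $(Id+\LL)^{-\beta/2}$ is handled identically with the extra $e^{-t}$, which only improves the large-$t$ tail.

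Finally, for the bound assuming $f\in L^\infty$, I apply exactly the same argument with $\alpha=0$: Lemma~\ref{sizeheat} still yields $|W_tf(x)|\le C\|f\|_\infty(1+\sqrt{t}/\rho(x))^{-M}$ (one can also read this off directly from \eqref{cotanucleo} integrated in $z$), and the same splitting at $t=\rho(x)^2$ gives $|\mathcal{T}_\beta f(x)|\le C\|f\|_\infty \rho(x)^\beta$. The only step that requires any care is the choice $M>\beta$ to ensure integrability at infinity and the correct homogeneity in $\rho(x)$ from the tail; no real obstacle is expected since Lemma~\ref{sizeheat} allows $M$ arbitrary.
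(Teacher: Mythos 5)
Your proposal is correct and follows essentially the same route as the paper: apply Lemma~\ref{sizeheat} with $\ell=0$, split the $t$-integral at $\rho(x)^2$, bound the decay factor by $1$ on the near piece and use the polynomial decay (the paper takes $M=\beta+2$ where you take any $M>\beta$) on the far piece, and treat the bounded case as the $\alpha=0$ version of the same estimate. No issues.
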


\begin{proof}
If $\rho(\cdot)^{-\alpha}f\in L^\infty(\R^n)$ for some $\alpha>0$, then by   Lemma \ref{sizeheat} we get 
\begin{align*}
	  |(Id+\LL)^{-\beta/2} f(x)|&=\Big| \frac{1}{\Gamma(\beta/2)}\int_0^\infty e^{-t}e^{-t\LL}f(x) t^{\beta/2}\frac{dt}{t}\Big|\\&\le C\, M_\alpha^{\LL}[f]\,   \int_0^{\rho(x)^2} \rho(x)^\alpha t^{\beta/2}\frac{dt}{t}+C\,M_\alpha^{\LL}[f]\,\int_{\rho(x)^2}^\infty \rho(x)^\alpha \left(\frac{\rho(x)^2}{t}\right)^{\beta/2+1}t^{\beta/2}\frac{dt}{t}\\ &= C\,M_\alpha^{\LL}[f]\,\rho(x)^{\alpha+\beta}, \: x\in\R^n.
	  \end{align*}
	  The same estimate works for  $ \LL^{-\beta/2}f.$ The proof in the second case runs parallel, since  Lemma \ref{sizeheat} has an obvious version for bounded functions.
	  
	  \end{proof}

%	  \begin{align*}
%	  |\LL^{-\beta/2}f(x)|&\le C\int_0^{\rho(x)^2} \rho(x)^\alpha \frac{dt}{t^{1-\beta/2}}+C\int_{\rho(x)^2}^\infty \rho(x)^\alpha \left(\frac{\rho(x)^2}{t}\right)^{\beta/2+1}\frac{dt}{t^{1-\beta/2}}\le C\rho(x)^{\alpha+\beta}, \: x\in\R^n.
%	  \end{align*}

	  {\it Proof of Theorem \ref{Schau}}. We prove only (i), estimate (ii) can be proved analogously. 
	  
   Let $f\in \Lambda^W_{\alpha/2}$. Lemma \ref{sizeheat} with $\ell=0$ together with Fubini's theorem allow us to get $\displaystyle W_y((Id+\LL)^{-\beta/2}f)(x)= 
	 \frac{1}{\Gamma(\beta/2)}\int_0^\infty e^{-t} W_y(W_tf)(x) t^{\beta/2}\frac{dt}{t}$.   Also observe that by the semigroup property and Lemma \ref{cambiaryrho} with $j=1$ and $m$ such that $ {\frac{m}{2}+1\ge[\alpha/2] +1  }  $,  we have 
	 \begin{align*}
 \int_0^\infty \Big|e^{-t} \partial_yW_y(W_tf)(x) \Big|t^{\beta/2}\frac{dt}{t}
	 &=  \int_0^\infty \Big| e^{-t} \partial_wW_w f(x)\big|_{w=y+t} \Big|t^{\beta/2}\frac{dt}{t}\\ & \le C\, S_\alpha[f]\, \int_0^\infty e^{-t} \rho(x)^m (y+t)^{-(m/2+1)+\alpha/2} t^{\beta/2}\frac{dt}{t}.
	 \end{align*}
The last integral can be bounded by a uniform (in a neighborhood of $y$) integrable function  (of $t$).  This means that  we can interchange the derivative with respect to $y$ and the integral with respect to $t$ in the above expression.
	
	  Let $\ell=[\alpha/2+\beta/2]+1$.  By iterating  the above arguments and using the hypothesis we have 
	  	\begin{align*}
	  	|\partial_y^\ell W_y ((Id+\LL)^{-\beta/2} f(x))|&=\left|\frac{1}{\Gamma(\beta/2)}\int_0^\infty e^{-t}\partial_y^\ell W_y(W_tf)(x) t^{\beta/2}\frac{dt}{t}\right|\\
	  	&\le {C\,S_\alpha[f]\,}\int_0^\infty e^{-t}(\partial_w^\ell W_w f(x)\Big|_{w=y+t})t^{\beta/2}\frac{dt}{t}
		\\&\le {C\,S_\alpha[f]\,}\int_0^\infty e^{-t}(y+t)^{-\ell+\alpha/2}t^{\beta/2}\frac{dt}{t}
	  	\\&\stackrel{\frac{t}{y}=u}{\le }C\,S_\alpha[f]\,{y^{\alpha/2+\beta/2-\ell}}\int_0^\infty \frac{u^{\beta/2}e^{-yu}}{(1+u)^{\ell-\alpha/2}}\frac{du}{u}
	  	\\&\le C\,S_\alpha[f]\, y^{\alpha/2+\beta/2-\ell}.
	  	\end{align*}
	  	
	  	 When $f\in L^\infty(\R^n)$ we apply Lemma \ref{Cauchy} and we get for $\ell= [\beta/2]+1$ that $|\partial_y^\ell W_y W_\nu f(x)| \le C \frac{\|f\|_{\infty}}{y^{\ell}}$. Then we can proceed as before.
		 
		 By using Lemma \ref{Potencial} we get the bound of $M_{\alpha+\beta}^{\LL}[f]$ and we end the proof of the  theorem.	  \edproof
	  
	  \begin{remark}
	  	 In the case $0<\alpha+\beta <1$, with $0<\alpha,\beta<1$, statement (i) of Theorem \ref{Schau} was obtained in  \cite{BonHar} and \cite{MSTZ}  for the spaces given by \eqref{polabruno}. Moreover,  (ii) is also proved for $0<\beta<1$ in \cite{BonHar}. 
	  	\end{remark}
\begin{lemma} \label{holderlema} Let $0< \beta < \alpha$ and $f$ be a function in the space $	\Lambda_{\alpha/2}^{W}$. Then 
$\mathcal{L}^{\beta/2} f$ is well defined and 
 $$M_{\alpha-\beta}^{\LL}[\mathcal{L}^{\beta/2} f] \le C_{\alpha,\beta} \| f\|_{\Lambda_{\alpha/2}^{W}}. $$
\end{lemma}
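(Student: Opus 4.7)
The plan is to obtain the pointwise bound
\[
|\LL^{\beta/2}f(x)|\le C\|f\|_{\Lambda^W_{\alpha/2}}\rho(x)^{\alpha-\beta}
\]
by splitting the integral defining $\LL^{\beta/2}$ at the natural threshold $t=\rho(x)^2$, and estimating each piece using the heat-derivative bounds which characterize $\Lambda^W_{\alpha/2}$.

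The first, preparatory, step is to replace the exponent $[\beta/2]+1$ in the definition of $\LL^{\beta/2}$ by $M:=[\alpha/2]+1$. By the spectral theorem (or equivalently by testing the candidate formula on a generalised eigenfunction with eigenvalue $\lambda>0$), for any integer $M>\beta/2$,
\[
\LL^{\beta/2} f(x) = \frac{1}{c_{\beta,M}}\int_0^\infty (Id-e^{-t\LL})^M f(x)\,\frac{dt}{t^{1+\beta/2}},\qquad c_{\beta,M}:=\int_0^\infty (1-e^{-u})^M u^{-1-\beta/2}\,du,
\]
where $c_{\beta,M}$ is finite (convergence at $0$ uses $M>\beta/2$, at $\infty$ uses $\beta>0$). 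The point of taking $M=[\alpha/2]+1$ is that this is exactly the order of heat derivative directly controlled by $S_\alpha^W[f]$. When $\alpha<2$ (the main range of interest in the paper) one has $[\beta/2]=[\alpha/2]=0$, so $M=[\beta/2]+1$ and this step is vacuous.

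For such an $M$ the telescoping semigroup identity
\[
(Id-e^{-t\LL})^M f(x) = (-1)^M\int_{[0,t]^M}\partial_s^M W_s f(x)\big|_{s=s_1+\cdots+s_M}\,ds_1\cdots ds_M
\]
is available. On $t\le\rho(x)^2$ I would insert $|\partial_s^M W_s f(x)|\le C\,S_\alpha^W[f]\,s^{-M+\alpha/2}$ (Proposition \ref{subir el $k$}) and perform the scaling $s_i=tu_i$; the resulting dimensionless integral $\int_{[0,1]^M}(u_1+\cdots+u_M)^{-M+\alpha/2}du$ is finite since $\alpha>0$, yielding
\[
|(Id-e^{-t\LL})^M f(x)|\le C\,S_\alpha^W[f]\,t^{\alpha/2}.
\]
On $t>\rho(x)^2$ I would instead expand the difference as $(Id-e^{-t\LL})^M=\sum_{j=0}^M\binom{M}{j}(-1)^j e^{-jt\LL}$ and use Lemma \ref{sizeheat} with $\ell=0$ together with $|f(x)|\le M^\LL_\alpha[f]\rho(x)^\alpha$ to obtain $|(Id-e^{-t\LL})^M f(x)|\le C\,M^\LL_\alpha[f]\,\rho(x)^\alpha$.

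Combining the two regimes,
\[
|\LL^{\beta/2}f(x)|\le C\,S_\alpha^W[f]\!\int_0^{\rho(x)^2}\! t^{\frac{\alpha-\beta}{2}-1}\,dt + C\,M^\LL_\alpha[f]\,\rho(x)^\alpha\!\int_{\rho(x)^2}^\infty \!t^{-1-\beta/2}\,dt\le C\,\|f\|_{\Lambda^W_{\alpha/2}}\,\rho(x)^{\alpha-\beta},
\]
where the small-$t$ integral converges exactly because $\alpha>\beta$, and the large-$t$ integral supplies the missing factor $\rho(x)^{-\beta}$. The absolute convergence just established simultaneously yields the well-definedness of $\LL^{\beta/2}f$. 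The main technical point is the preparatory step: one must justify using $M=[\alpha/2]+1$ in place of the $[\beta/2]+1$ appearing in the definition, since otherwise the inner $M$-fold integral on $[0,t]^M$ may diverge at the origin whenever $[\alpha/2]>[\beta/2]$.
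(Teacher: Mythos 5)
Your estimates in the two regimes are correct, and on the paper's main range $0<\alpha<2$ (where $[\alpha/2]=[\beta/2]=0$, so your $M$ equals $[\beta/2]+1$ and the preparatory step is vacuous) your argument coincides with the paper's: the authors also split at $t=\rho(x)^2$, bound the tail via Lemma \ref{sizeheat} by $C\,M^\LL_\alpha[f]\rho(x)^{\alpha-\beta}$, and in the case $k=[\alpha/2]+1=[\beta/2]+1$ use exactly your telescoping-plus-scaling computation to get $|(Id-e^{-t\LL})^{\ell}f(x)|\le C\,S^W_\alpha[f]\,t^{\alpha/2}$.

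The gap is the preparatory step, and it sits precisely where the difficulty of the general case $[\alpha/2]>[\beta/2]$ lives. The lemma concerns the operator as defined in the paper, with exponent $\ell=[\beta/2]+1$; you prove bounds for a different integral expression with exponent $M=[\alpha/2]+1$ and assert the two agree ``by the spectral theorem.'' That identity is standard for $f\in L^2(\R^n)$, but functions in $\Lambda^W_{\alpha/2}$ only satisfy $|f(x)|\le M^\LL_\alpha[f]\,\rho(x)^\alpha$ and need be neither bounded nor square integrable, so the spectral calculus does not apply to them directly; a density or duality argument would itself require absolute-convergence estimates for the $\ell=[\beta/2]+1$ expression, which is exactly what the substitution is meant to avoid. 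In particular, your argument does not establish that the integral actually defining $\LL^{\beta/2}f(x)$ converges absolutely, which is part of the claim of well-definedness. The paper keeps $\ell=[\beta/2]+1$ throughout and, when $k=[\alpha/2]+1>\ell$, controls $\partial_\nu^{\ell}W_\nu f$ by writing it as $-\int_\nu^{\ell\rho(x)^2}\partial_u^{\ell+1}W_uf\,du$ plus a boundary term at $u=\ell\rho(x)^2$, using Lemma \ref{cambiaryrho}; this is what produces the extra case analysis ($\alpha$ even versus not, with logarithmic integrals) that your shortcut sidesteps. To repair the proof, either give a genuine justification that the two integral formulas coincide on $\Lambda^W_{\alpha/2}$, or run the integration-of-derivatives argument directly for $\ell=[\beta/2]+1$ as the paper does.
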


\begin{proof} We can write	\begin{align*}
	|\LL^{\beta/2}f(x)|= \left|\frac1{c_\beta} \left(\int_0^{\rho(x)^2} +\int_{\rho(x)^2}^\infty\right)
	(Id-e^{- t\LL})^{[\beta/2]+1}f(x)\,\frac{dt}{t^{1+\beta/2}}\right|=|I+II|.
		\end{align*}
		As $\rho(\cdot)^{-\alpha}f\in L^\infty(\R^n)$, by Lemma   \ref{sizeheat} we have 
	$$|II|\le C M_\alpha^\LL[f]\int_{\rho(x)^2}^\infty \rho(x)^\alpha\frac{dt}{t^{1+\beta/2}}=CM_\alpha^\LL[f]\rho(x)^{\alpha-\beta}.$$

	Now we shall estimate  $|I|$. Let $\ell=[\beta/2]+1$,
	by the semigroup property we have \newline$\displaystyle	{|}(Id-e^{- t\LL})^{[\beta/2]+1}f(x)	{|}={\Big|}\underbrace{\int_0^{t} \dots \int_0^t}_{\substack{\ell}} \partial_{y_1}\dots \partial_{y_\ell} W_{y_1+\dots + y_\ell} f(x) dy_1\dots dy_\ell{\Big|}$.

If $\beta/2<\alpha/2<\ell$, then  $k:=[\alpha/2]+1=\ell$ and
$$|	(Id-e^{- t\LL})^{\ell}f(x)|\le C\,S_\alpha^{W}[f]\underbrace{\int_0^{t} \dots \int_0^t}_{\substack{\ell}}\frac{dy_\ell\dots dy_1}{(y_1+\dots+y_{\ell})^{\ell-\alpha/2}}\le C\,S_\alpha^{W}[f]\,t^{\alpha/2}$$
so $\displaystyle|I|\le C \,S_\alpha^{W}[f]\,\int^{\rho(x)^2}_0 t^{\alpha/2}\frac{dt}{t^{1+\beta/2}}=C\,S_\alpha^{W}[f]\,\rho(x)^{\alpha-\beta}.$

If $\ell<\alpha/2$, then  $k>\ell$ and by Lemma \ref{cambiaryrho} we get, for $0<t\le \rho(x)^2$,
\begin{align*}|	(Id-e^{- t\LL})^{\ell}f(x)|&= \Big|\underbrace{\int_0^{t} \dots \int_0^t}_{\substack{\ell}}\Big(-\int_{y_1+\dots+y_\ell}^{\ell(\rho(x))^2}\frac{\partial_u^{\ell+1}W_uf(x)}{(\rho(x)^2)^{k-(\ell+1)}} du(\rho(x)^2)^{k-(\ell+1)}\\
&\quad\quad+\frac{\partial_\nu^\ell W_\nu f(x)\Big|_{\nu=\ell\rho(x)^2}}{(\rho(x)^2)^{k-\ell}}(\rho(x)^2)^{k-\ell}\Big)dy_1\dots dy_\ell\Big|\\
&\le  C\,S_\alpha^{W}[f](\rho(x)^2)^{k-(\ell+1)}\underbrace{\int_0^{t} \dots \int_0^t}_{\substack{\ell}}\int_{y_1+\dots+y_\ell}^{\ell(\rho(x))^2}u^{-k+\alpha/2}du {dy_1\dots dy_\ell}\\&
\quad+C\,S_\alpha^{W}[f]{(\ell\rho(x)^2)^{-k+\alpha/2}}(\rho(x)^2)^{k-\ell}\,t^\ell.
\end{align*}

Therefore, if $\alpha$ is not even we have, for $0<t\le \rho(x)^2$, 
\begin{align*}
|&	(Id-e^{- t\LL})^{\ell}f(x)|\\
&\le  C\,S_\alpha^{W}[f](\rho(x)^2)^{k-(\ell+1)}\underbrace{\int_0^{t} \dots \int_0^t}_{\substack{\ell}}((y_1+\dots+y_\ell)^{-k+\alpha/2+1}+(\ell\rho(x)^2)^{-k+\alpha/2+1}) {dy_1\dots dy_\ell}\\&\quad\quad
+C\,S_\alpha^{W}[f](\rho(x)^2)^{\alpha/2-\ell}t^\ell \\
&\le  C\,S_\alpha^{W}[f]((\rho(x)^2)^{k-(\ell+1)}t^{-k+\alpha/2+\ell+1}+(\rho(x)^2)^{-\ell+\alpha/2}t^\ell).
\end{align*}
Thus, in this case we get 
\begin{align*}\displaystyle|I|&\le C \,S_\alpha^{W}[f]\left((\rho(x)^{2})^{k-\ell-1}\int^{\rho(x)^2}_0t^{-k+\alpha/2+\ell-\beta/2}{dt}+(\rho(x)^2)^{-\ell+\alpha/2}\int^{\rho(x)^2}_0t^{\ell-\beta/2-1}{dt}\right)\\&=C\,S_\alpha^{W}[f]\rho(x)^{\alpha-\beta}.\end{align*}
If $\alpha$ is  even, then  $k=\alpha/2+1$  and,  for $0<t\le \rho(x)^2$, 
\begin{align*}
|({Id}-&e^{- t\LL})^{\ell}f(x)|\\
\le&
C\,S_\alpha^{W}[f](\rho(x)^2)^{\alpha/2-\ell}\underbrace{\int_0^{t} \dots \int_0^t}_{\substack{\ell}}(\log(\ell(\rho(x))^2)-\log(y_1+\dots+y_\ell)){dy_1\dots dy_\ell}\\
&\quad \quad+C\,S_\alpha^{W}[f](\rho(x)^2)^{\alpha/2-\ell}t^\ell.
\end{align*}	
In order to solve the last integral we can perform the change of variables  $\tilde{y}_1 =y_1 , \tilde{y}_2 =y_2 , \cdots , \tilde{y}_{\ell-1} =y_{\ell-1} ,\tilde{y} =y_1+\cdots+y_\ell $. Then we proceed as in the proof of Proposition \ref{tam}. 	
Putting together the above computations we get in this case
 $$
{\Big|} \int_0^{(\rho(x))^2} \frac{(Id-e^{- t\LL})^{\ell}f(x)}{t^{1+\beta/2}}dt{\Big|} \le C \,S_\alpha^{W}[f](\rho(x))^{\alpha-\beta}.
 $$
\end{proof}

 {\it Proof of Theorem \ref{Holderestimates}.} Let $\ell=[\beta/2]+1$ and $m= \left[\frac{\alpha-\beta}{2}\right]+1$. Then, $m+\ell=\left[\frac{\alpha-\beta}{2}\right]+1+[\beta/2]+1 >\alpha/2-\beta/2+\beta/2=\alpha/2$. As $m+\ell\in \mathbb{N}$ we get $m+\ell \ge[\alpha/2]+1.$
	  	
	  	By using the arguments in the proof of Lemma \ref{holderlema} we have
	  	\begin{align*}
	  	\Big|\partial_y^mW_y(\LL^{{\beta/2}} f)(x) \Big|&= \Big|{C_\beta} \int_0^\infty \partial_y^mW_y \Big(\underbrace{\int_0^{t} \dots \int_0^t}_{\substack{\ell}} \partial_\nu^\ell W_{\nu} |_{\nu= s_1+\dots+s_\ell} f(x)ds_1\dots ds_\ell \Big) \frac{dt}{t^{1+\beta/2}} \Big|\\ &=
	  	\Big|{C_\beta} \int_0^\infty \Big( \underbrace{\int_0^{t} \dots \int_0^t}_{\substack{\ell}}\partial_\nu^{m+\ell}W_{\nu} |_{\nu=y+ s_1+\dots+s_\ell}f(x) ds_1\dots ds_\ell \Big) \frac{dt}{t^{1+\beta/2}}\Big|
	  	\\ &\le
	  	C_\beta\,  S_\alpha^W[f] \int_0^\infty \Big(\underbrace{\int_0^{t} \dots \int_0^t}_{\substack{\ell}} (y+s_1+\dots s_\ell)^{-(m+\ell) +\alpha/2} ds_1\dots ds_\ell \Big) \frac{dt}{t^{1+\beta/2}} \\&=
	  	C_\beta\,  S_\alpha^W[f] \int_0^y ( \dots )  \frac{dt}{t^{1+\beta/2}}  + C_\beta\,    S_\alpha^W[f] \int_y^\infty  (\dots ) \frac{dt}{t^{1+\beta/2}} =C_\beta\,    S_\alpha^W[f]\,(A +B).
	  	\end{align*}
	   Now we shall estimate $A $ and $B$.
		  	\begin{align*} 
	  	A  &= 
	  	C_\beta y^{-m+\alpha/2} \int_0^y \underbrace{\int_0^{t/y} \dots \int_0^{t/y}}_{\substack{\ell}} (1+s_1+\dots s_\ell)^{-(m+\ell) +\alpha/2} ds_1\dots ds_\ell \frac{dt}{t^{1+\beta/2}}  \\&\le 
	  	C_\beta\, y^{-m+\alpha/2} \int_0^y   \Big(\frac{t}{y}\Big) ^\ell \frac{dt}{t^{1+\beta/2}} = C_\beta\, y^{-m+\alpha/2-\ell} \int_0^y  \frac{dt}{t^{1+\beta/2-\ell}}
	  	= C_\beta\,y^{-m+(\alpha-\beta)/2}.
	  	\end{align*}
	   On the other hand,
	  	\begin{align*}
	  	B  &\le  \int_y^\infty  \sum_{j=0}^\ell \frac{C_j}{(y+jt)^{m-\alpha/2}} \frac{dt}{t^{1+\beta/2}} =    \sum_{j=0}^\ell \int_y^\infty \frac{C_j}{(y+jt)^{m-\alpha/2}} \frac{dt}{t^{1+\beta/2}} \\& \le   \sum_{j=0}^\ell C_j y^{-m+(\alpha-\beta)/2}.
	  	\end{align*}
	  The last inequality is obtained by observing that $ y \le y+jt \le (1+\ell) t$ inside the integrals together with the  discussion  about the sign of $m-\alpha/2$.  	
	  	
	  	\edproof

	  \begin{remark}
	  	 The previous result was obtained  in \cite{MSTZ}  for the spaces given by \eqref{polabruno} when $0< \alpha < 1$.
	  	
	  	\end{remark}

	{\it Proof of Theorem \ref{TRiesz}.} Let $0<\alpha\le 1-n/q$ and $f\in 	\Lambda_{\alpha/2}^W$. By Theorem \ref{Schau} we have that $\LL^{-1/2}f \in 	\Lambda_{\frac{\alpha+1}{2}}^{W}$ and by Theorem \ref{Holdercomparacion} this means that  $\LL^{-1/2}f \in	\Lambda_{\frac{\alpha+1}{2}}^{\tilde{W}}$ and $\rho(\cdot)^{-(\alpha+1)}\LL^{-1/2}f\in L^\infty(\R^n)$. Therefore, by Proposition \ref{Steinderivadax}   we get that $\mathcal{R}_if=\partial_{x_i}(\LL^{-1/2}f)\in 	\Lambda_{\frac{\alpha}{2}}^{\tilde{W}}$ and $\rho(\cdot)^{-\alpha}\mathcal{R}_if\in L^\infty(\R^n)$.
	  	Thus, Theorem \ref{Holdercomparacion} gives the second statement of the theorem.

	  	Suppose now $1<\alpha\le2-n/q$ and $f\in 	\Lambda_{\alpha/2}^W$.  By Theorem \ref{Holdercomparacion} this means that  $f \in	\Lambda_{\frac{\alpha}{2}}^{\tilde{W}}$ and $\rho(\cdot)^{-\alpha}f\in L^\infty(\R^n)$.  Then, Proposition \ref{Steinderivadax}  gives that $\partial_{x_i}f\in 	\Lambda_{\frac{\alpha-1}{2}}^{\tilde{W}}$ and $\rho(\cdot)^{-(\alpha-1)}\partial_{x_i}f\in L^\infty(\R^n)$. Again,  by Theorem \ref{Holdercomparacion} this means that  $\partial_{x_i}f \in	\Lambda_{\frac{\alpha-1}{2}}^{{W}}$ and by Theorem \ref{Schau} we get that $R_if=\LL^{-1/2}(\partial_{x_i}f)\in \Lambda_{\frac{\alpha}{2}}^{{W}}$.

	  \edproof
	 
	 \begin{remark}
	 	Theorem \ref{TRiesz} was known in the case $0< \alpha <1$ for the spaces given by \eqref{polabruno}, see \cite{BonHar2}.
	 	
	 \end{remark}

	  {\it Proof Theorem \ref{multiplicador}.} Lemmas \ref{sizeheat} and \ref{cambiaryrho}   guaranty the integrability of $\partial_s(W_sf(x))$  as a function of $s.$ Then, we can write
	\begin{align*}  
	m(\LL) f(x) =  \int_0^\infty (-\partial_s(W_sf(x))a(s)ds  = \left(\int_{0}^{\rho(x)^2}+\int_{\rho(x)^2}^\infty \right){(-\partial_s(W_sf(x)))} a(s) ds=I+II.
	  \end{align*}
	   By using Lemma \ref{sizeheat}, we get  
	   \begin{align*}
	 \big|  II  \big|&\le C\|a\|_\infty\, M_\alpha^{\LL}[f]\rho(x)^\alpha\int_{\rho(x)^2}^\infty \frac{1}{s}\left(1+\frac{s}{\rho(x)^2}\right)^{-M} ds\\ &= C\|a\|_\infty \, M_\alpha^{\LL}[f] \,  \rho(x)^\alpha\int_1^\infty \frac{1}{u(1+u)^M}du\le  C \|a\|_\infty\, M_\alpha^{\LL}[f]\,  \rho(x)^\alpha.
	   \end{align*}
	  Now we estimate $I$. Let  $k=[\alpha/2]+1$. If $\alpha$ is not even, by Lemma \ref{cambiaryrho} we get
	  \begin{align*}
	  \big| I \big|&{\le}(\rho(x))^{2(k-1)}\int_{0}^{\rho(x)^2} \frac{|\partial_s W_sf(x)|}{(\rho(x))^{2(k-1)}}|a(s)|ds\le C\|a\|_\infty\, S_\alpha^{W}[f] \,  (\rho(x))^{2(k-1)}\int_{0}^{\rho(x)^2} s^{-k+\alpha/2}ds\\&=C\|a\|_\infty\, S_\alpha^{W}[f] \, \rho(x)^{\alpha}.
	  \end{align*}
	 If  $\alpha$ is even, by Lemma \ref{cambiaryrho} we have 
	 \begin{align*}
	 |I|&=\left|\int_{0}^{\rho(x)^2} \left(\int_{s}^{\rho(x)^2}\frac{\partial_u^2W_uf(x)}{(\rho(x))^{2(k-2)}}du (\rho(x))^{2(k-2)}- \frac{\partial_\nu W_\nu f(x)\Big|_{\nu=\rho(x)^2}}{(\rho(x))^{2(k-1)}} (\rho(x))^{2(k-1)}  \right)a(s)ds\right|\\
	 &\le C\|a\|_\infty  S_\alpha^{W}[f] \int_{0}^{\rho(x)^2} \left(\int_{s}^{\rho(x)^2}u^{-1}du (\rho(x))^{\alpha-2}+(\rho(x))^{\alpha-2}\right) ds\\
	 &= C\|a\|_\infty S_\alpha^{W}[f] \left(\int_{0}^{\rho(x)^2}(\log(\rho(x)^2)-\log s)(\rho(x))^{\alpha-2} ds+\rho(x)^{\alpha}\right)\\
	 &=C\|a\|_\infty  S_\alpha^{W}[f] \rho(x)^{\alpha}.
	 \end{align*}
Up to now, we have shown that $M_\alpha^{\LL}[m(\mathcal{L})f] \le C\|f\|_{\Lambda^{W}_{\alpha/2}}.$ 	 
	
	 Now we want to see that $\|\partial_u^kW_ym(\LL )f\|_\infty\le Cy^{-k+\alpha/2}$.   Fubini's Theorem together with Lemmas \ref{cambiaryrho} and \ref{sizeheat} allow us to interchange integral with derivatives and kernels. Then,	 \begin{align*}
	 |\partial_u^kW_ym(\LL )f(x)|&=\left|\int_0^\infty \partial_u^{k+1}W_uf(x)\big|_{u=y+s}a(s)ds\right|\le C\|a\|_\infty S_\alpha^{W}[f] \int_0^\infty \frac{ds}{(y+s)^{k+1-\alpha/2}}ds\\
	 &=C\|a\|_\infty S_\alpha^{W}[f]\, y^{-(k+1)+\alpha/2}\int_0^\infty\frac{ y}{(1+r)^{k+1-\alpha/2}}dr=C\|a\|_\infty S_\alpha^{W}[f] y^{-k+\alpha/2}.
	  \end{align*}
	 \edproof
		  
		  \begin{remark}
	In the case $0<\alpha <1$,  the previous result  was obtained in \cite{MSTZ} for the spaces given by \eqref{polabruno}.
		  	
		  \end{remark}
				  			
%%%%%%%%%%%%%%%%%%		
\section{Lipschitz spaces via the Poisson Semigroup} \label{secpoisson}

%%%%%%%%%%%%%%
 
 %	Given a second order differential operator $\LL$ and its heat semigroup, $e^{-t\LL}$, we can consider the Poisson semigroup defined with
  The Poisson semigroup of the operator $\LL$ was  defined in 
 ( \ref{Poissonformula}).
		The following result was proved in \cite{MSTZ}. 	 
					 
					 \begin{lemma}\label{Poisson est}
Given $k \in \mathbb{N}$,  for any 
 $N>0$ there exists a constant $C=C_{N,k}$ such that
\begin{enumerate}[(a)]
    \item $\displaystyle  P_y(x,z)
            \leq C \frac{y}{(\abs{x-z}^2+y^2)^{\frac{n+1}{2}}}
             \left(1+\frac{(\abs{x-z}^2+y^2)^{1/2}}{\rho(x)}
             + \frac{(\abs{x-z}^2+y^2)^{1/2}}{\rho(z)}\right)^{-N}$;
    \item $\displaystyle| \partial_y^k P_y(x,z)|\leq C \frac{1}{(\abs{x-z}^2+y^2)^{\frac{n+k}{2}}}
          \left(1+\frac{(\abs{x-z}^2+y^2)^{1/2}}{\rho(x)}
          + \frac{(\abs{x-z}^2+y^2)^{1/2}}{\rho(z)}\right)^{-N}$.
\end{enumerate}
\end{lemma}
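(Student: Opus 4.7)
The plan is to derive both estimates from the subordination formula \eqref{Poissonformula} combined with the Gaussian bound \eqref{cotanucleo} on the heat kernel. Write $R = \sqrt{|x-z|^2 + y^2}$; the target is to produce this quantity as a scale from the interplay between $e^{-y^2/(4\tau)}$ (coming from subordination) and $e^{-|x-z|^2/(c\tau)}$ (coming from the heat kernel bound), after noticing that their product dominates $e^{-c'R^2/\tau}$ for a suitable $c'>0$.

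For part (a), I would insert \eqref{cotanucleo} into \eqref{Poissonformula} to obtain
\begin{equation*}
P_y(x,z) \le C_N\, y \int_0^\infty e^{-c'R^2/\tau}\,\tau^{-(n+3)/2}\,\Bigl(1+\tfrac{\sqrt\tau}{\rho(x)}+\tfrac{\sqrt\tau}{\rho(z)}\Bigr)^{-N}\,d\tau.
\end{equation*}
The first step is to reduce the $\rho$-dependence to the desired form. The key inequality is that, for $\sqrt\tau \le R$, one has $1+R/\rho \le (1+R/\sqrt\tau)(1+\sqrt\tau/\rho)$, so
\begin{equation*}
\Bigl(1+\tfrac{\sqrt\tau}{\rho}\Bigr)^{-N} \le C\,\Bigl(\tfrac{R}{\sqrt\tau}\Bigr)^{N}\Bigl(1+\tfrac{R}{\rho}\Bigr)^{-N},
\end{equation*}
and the surplus factor $(R/\sqrt\tau)^N$ is absorbed by half of the Gaussian using $u^N e^{-c'u^2/2}\le C_N$. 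For $\sqrt\tau\ge R$ the bound $(1+\sqrt\tau/\rho)^{-N}\le(1+R/\rho)^{-N}$ is direct. Pulling the (now $\tau$-independent) factor $(1+R/\rho(x)+R/\rho(z))^{-N}$ out of the integral and evaluating $\int_0^\infty e^{-c''R^2/\tau}\tau^{-(n+3)/2}\,d\tau$ via the substitution $\tau = R^2/u$ yields a constant times $R^{-(n+1)}$, producing exactly the bound claimed in (a).

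For part (b), I would differentiate the subordination formula $k$ times under the integral sign. A direct computation shows
\begin{equation*}
\partial_y^k\bigl(y\,e^{-y^2/(4\tau)}\bigr) = \tau^{-(k-1)/2}\,q_k\bigl(y/\sqrt\tau\bigr)\,e^{-y^2/(4\tau)},
\end{equation*}
where $q_k$ is a polynomial of degree $\le k+1$ (closely related to Hermite polynomials). Absorbing the polynomial growth into the exponential via $|q_k(u)|e^{-u^2/4}\le C_k e^{-u^2/8}$, the formal differentiation is justified and we arrive at
\begin{equation*}
|\partial_y^k P_y(x,z)| \le C_k\int_0^\infty \tau^{-(n+k)/2 - 1}\,e^{-c'R^2/\tau}\,\Bigl(1+\tfrac{\sqrt\tau}{\rho(x)}+\tfrac{\sqrt\tau}{\rho(z)}\Bigr)^{-N}\,d\tau.
\end{equation*}
Then exactly the same trick as in (a) reduces the $\rho$ factor to $(1+R/\rho(x)+R/\rho(z))^{-N}$ outside the integral, and the substitution $\tau = R^2/u$ produces $R^{-(n+k)}$, matching (b). An alternative I would mention is to exploit that $P_\zeta$ extends holomorphically in a sector $\{|\arg\zeta|<\pi/4\}$ and apply Cauchy's formula on $|\zeta-y|=y/2$ to part (a); this works cleanly when $y\gtrsim |x-z|$, but for $y\ll|x-z|$ one loses a factor $(R/y)^{k-1}$, which is why the direct differentiation of the subordination integral is preferable.

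The main obstacle is the careful bookkeeping of the double $\rho$ factor: the Gaussian trading argument must be performed with $N$ allowed arbitrarily large, and one has to check that the constant $c''$ obtained after absorbing $(R/\sqrt\tau)^{N}$ by the Gaussian is still positive and independent of $N$. Once that is settled, the proof reduces to the standard Gamma-function computation, and the difference between (a) and (b) is exactly the polynomial factor $q_k(y/\sqrt\tau)\tau^{-(k-1)/2}$ in the subordination, which swaps the $y/R^{n+1}$ behaviour for $R^{-(n+k)}$.
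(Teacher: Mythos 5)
Your argument is correct: the subordination formula \eqref{Poissonformula} combined with the Gaussian bound \eqref{cotanucleo}, the elementary inequality $1+R/\rho\le(1+R/\sqrt\tau)\,(1+\sqrt\tau/\rho(x)+\sqrt\tau/\rho(z))$ for $\sqrt\tau\le R$ (with the surplus $(R/\sqrt\tau)^N$ absorbed by half of the Gaussian), and the Gamma-function evaluation after $\tau=R^2/u$ give exactly (a) and (b), and your computation of $\partial_y^k\bigl(ye^{-y^2/(4\tau)}\bigr)$ and the resulting exponent $\tau^{-(n+k)/2-1}$ are right. The paper itself does not prove this lemma --- it simply cites \cite{MSTZ} --- and your subordination argument is the standard one used there, so there is nothing to compare beyond noting that you have reconstructed the cited proof.
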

As a consequence, we have the following proposition.
\begin{proposition}\label{Poisoninfinito}
 Let $f$ be a function such that $M^P[f]=\int_{\R^n}\frac{|f(x)|}{(1+|x|)^{n+1}}dx<\infty.$ Then, $  \lim_{y\to \infty} \partial_y^\ell P_yf(x) = 0,$  \hbox{ for every } $\ell\in\N\cup\{0\}, \: x\in\R^n,$ and $\lim_{y\to 0}P_yf(x)=f(x),$ a.e. $x\in\R^n$.
\end{proposition}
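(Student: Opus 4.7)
The plan is to handle the two limits separately, using Lemma \ref{Poisson est} for the decay at infinity and a subordination argument plus Lemma \ref{claim} for the pointwise convergence at zero.

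For the limit $\lim_{y\to\infty}\partial_y^\ell P_y f(x)=0$, I would use Lemma \ref{Poisson est}(b) for $\ell\ge 1$ and Lemma \ref{Poisson est}(a) for $\ell=0$, keeping the $\rho(x)$-factor $\bigl(1+(|x-z|^2+y^2)^{1/2}/\rho(x)\bigr)^{-N}$ in the size estimate. Since $(|x-z|^2+y^2)^{1/2}\ge y$, this factor is bounded by $(\rho(x)/y)^N$, producing an extra polynomial decay in $y$. The remaining space integral is controlled by splitting $\R^n$ into $\{|z|\le 2|x|\}$, where $|f|$ is integrable with a constant depending on $|x|$ (using $M^P[f]<\infty$ and the bound $(1+|z|)^{n+1}\le C(1+|x|)^{n+1}$), and $\{|z|>2|x|\}$, where $|x-z|+y\ge c(1+|z|)$ so the resulting integral is controlled by $M^P[f]$. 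Choosing $N$ large enough then makes everything go to zero. The case $\ell\ge 1$ is in fact easier since there is no $y$ in the numerator.

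For the limit $\lim_{y\to 0}P_y f(x)=f(x)$ a.e., fix $|x|\le A$ and split $f=f_1+f_2$ with $f_1=f\chi_{\{|z|\le 2A\}}$ and $f_2=f\chi_{\{|z|>2A\}}$. The tail $f_2$ is easy: by Lemma \ref{Poisson est}(a), $P_y(x,z)\le Cy/(|x-z|+y)^{n+1}$, and for $|z|>2A\ge 2|x|$ we have $|x-z|\ge |z|/2\ge c(1+|z|)$, so $|P_y f_2(x)|\le Cy\int |f(z)|(1+|z|)^{-n-1}\,dz=Cy\,M^P[f]\to 0$ as $y\to 0$.

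For $f_1$, which lies in $L^1(\R^n)$ because $M^P[f]<\infty$ implies local integrability, I would plug into the subordination formula \eqref{Poissonformula} and change variables $u=y^2/(4\tau)$ to obtain
\begin{equation*}
P_y f_1(x)=\frac{1}{\sqrt{\pi}}\int_0^\infty e^{-u}\,u^{-1/2}\,W_{y^2/(4u)}f_1(x)\,du.
\end{equation*}
By Lemma \ref{claim}, $W_\tau f_1(x)\to f_1(x)$ for a.e.\ $x$ as $\tau\to 0$, so the integrand converges pointwise a.e.\ to $e^{-u}u^{-1/2}f_1(x)$. The uniform domination $|W_\tau f_1(x)|\le C \mathcal{M}f_1(x)$, valid a.e.\ via Lemma \ref{Cauchy} with $N=0$ and a standard Gaussian-over-Hardy-Littlewood majorization, allows the dominated convergence theorem to be applied in $u$. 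Since $\Gamma(1/2)=\sqrt{\pi}$, we conclude $P_y f_1(x)\to f_1(x)$ a.e., which combined with the estimate for $f_2$ gives the claim.

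The main obstacle is supplying the uniform-in-$\tau$ a.e.\ majorant for $W_\tau f_1$ that DCT requires; this is where one must be careful to invoke a maximal bound rather than a pointwise bound growing in $\tau$. All other pieces are controlled bookkeeping of the estimates already proved in the excerpt.
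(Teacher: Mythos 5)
Your proof is correct, but the treatment of the limit $y\to 0$ takes a genuinely different route from the paper's. Both arguments dispose of the tail in the same way (the kernel bound of Lemma \ref{Poisson est}(a) gives $|P_yf_2(x)|\le Cy\,M^P[f]$, matching the paper's estimate \eqref{Poissonlejos}), and the limit at $y\to\infty$ is in both cases a direct consequence of Lemma \ref{Poisson est}. The difference is in the local part: the paper compares the Schr\"odinger Poisson kernel with the classical one, pushing the perturbation estimate \eqref{difcalor} through the subordination formula to show $\int_{|x-z|<2|x|}(P_y(x,z)-\tilde P_y(x-z))f(z)\,dz\to 0$, and then quotes a.e.\ convergence of the \emph{classical} Poisson integral of an $L^1$ function; you instead keep the Schr\"odinger semigroup throughout, rewrite $P_yf_1(x)$ as an average of $W_{y^2/(4u)}f_1(x)$ against the probability density $\pi^{-1/2}e^{-u}u^{-1/2}\,du$, and pass to the limit under the integral using Lemma \ref{claim} together with the majorant $\sup_{\tau>0}|W_\tau f_1(x)|\le C\mathcal{M}f_1(x)$. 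Your route avoids redoing the kernel comparison at the Poisson level (it is already buried inside Lemma \ref{claim}), at the price of importing one standard external fact — the Hardy--Littlewood domination of convolution with an $L^1$-normalized radial decreasing majorant, applied to the Gaussian bound \eqref{cotanucleo} (note that for the undifferentiated kernel the relevant bound is \eqref{cotanucleo} itself rather than Lemma \ref{Cauchy}, which is stated for $k\ge1$). This is legitimate and arguably cleaner, since the subordination weight has total mass $1$ and the dominated convergence argument is routine once the maximal bound is in hand.
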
					 
\begin{proof}
The convergence to $0$ of the Poisson semigroup and its derivatives follows directly from the previous Lemma. It remains to prove that $\lim_{y\to 0}P_yf(x)=f(x),$ a.e. $x\in\R^n$.

By Lemma  \ref{Poisson est} we have that, for $y<1$,  
\begin{align}\label{Poissonlejos}  \int_{|x-z|> 2|x|}& |P_y(x,z)f(z)|dz\le \int_{|x-z|> 2|x|} \frac{y}{(|x-z|+y)^{n+1}} |f(z)| dz \\  &\le C\int_{|z|<1}  \frac{y}{(2|x|+y)^{n+1}} |f(z)| dz +C \int_{|z|>1} \frac{y}{(\frac{2}{3}|z|+y)^{n+1}} |f(z)| dz \nonumber\\ &\le
\frac{C y}{|x|^{n+1}}  \int_{|z|<1}  |f(z)| dz + Cy \int_{|z|>1} \frac{1}{(|z|+1)^{n+1}} |f(z)| dz\rightarrow 0, \hbox{ as } y\to 0. \nonumber
\end{align}
To manipulate the other integral, we proceed as in the proof of Lemma \ref{claim}. We compare the Poisson kernel with the kernel of the classical Poisson semigroup, $e^{-y\sqrt{-\Delta}}$, that we will denote by $\tilde{P}_y$.

By using \eqref{difcalor} we have that 
\begin{align*}
&\left|\int_{|x-z|<2|x|}(P_y(x,z)-\tilde{P}_y(x-z))f(z)dz \right|\\&\le C\int_0^\infty ye^{-\frac{y^2}{4\tau}}\int_{|x-z|<2|x|}|W_\tau(x,z)-\tilde{W}_\tau(x-z)||f(z)|dz \frac{d\tau}{\tau^{3/2}}\\&\le C\int_0^{\rho(x)^2} ye^{-\frac{y^2}{4\tau}}\int_{|x-z|<2|x|}\left(\frac{\sqrt{\tau}}{\rho(x)} \right)^{2-n/q}w_\tau(x-z)|f(z)|dz\frac{d\tau}{\tau^{3/2}}\\&\quad +C y\int_{\rho(x)^2}^\infty \frac{d\tau}{\tau^{3/2}}\int_{|x-z|<2|x|}|f(z)|dz\\&
\le  \frac{C(x)}{\rho(x)^{\epsilon}}\int_0^{\rho(x)^2} \frac{y}{\tau^{1/2}}e^{-\frac{y^2}{4\tau}}({\sqrt{\tau}})^{\epsilon}\frac{d\tau}{\tau}+ \frac{C(x) y}{\rho(x)}\\
&\le\frac{C(x)y^\epsilon}{\rho(x)^{\epsilon}}+\frac{C(x) y}{\rho(x)}\rightarrow 0, \text{ as } y\to 0,
\end{align*}
where $0<\epsilon<1$.

Finally, by the point-wise convergence of the classical Poisson semigroup to $L^1$ functions, we deduce the result.
\end{proof}
 	
Parallel to the heat semigroup case,  in order to prove Theorem \ref{tam3}, we shall need this lemma.

\begin{lemma}\label{cambiaryrhoP}
					Let $\alpha>0$ and $k=[\alpha]+1$. Assume that $f \in \Lambda_{\alpha}^{{P}}$, then for every $j,m\in\N\cup \{0\}$ such that $m+j\ge k$,  there exists a $C_{m,j}>0$ such that
					$$
					\left\| \frac{\partial_y^j P_yf}{\rho(\cdot)^m}\right\|_{\infty}\le C_m	S^P_\alpha[f] y^{-(m+j)+\alpha}.
					$$
				\end{lemma}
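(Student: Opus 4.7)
My plan mirrors the proof of Lemma \ref{cambiaryrho} for the heat semigroup, with Lemma \ref{Poisson est}(b) replacing Lemma \ref{Cauchy}, and Proposition \ref{Poisoninfinito} replacing the property that heat-semigroup derivatives vanish at infinity.

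First, for $\ell \ge k$, I would exploit the Poisson semigroup property to write
$$\partial_y^\ell P_y f(x)=C_\ell\int_{\R^n}\partial_a^{\ell-k}P_a(x,z)\big|_{a=y/2}\,\partial_b^k P_b f(z)\big|_{b=y/2}\,dz.$$
Applying Lemma \ref{Poisson est}(b) with $N=m+M$ for sufficiently large $M$, and exploiting the elementary bound $(1+t/\rho(x))^{-m}\le(\rho(x)/t)^m$ with $t=(|x-z|^2+(y/2)^2)^{1/2}\ge y/2$, the factor $\rho(x)^m$ can be pulled out of the kernel, leaving a tail of order $(|x-z|^2+y^2/4)^{-(n+\ell-k+m)/2}$ times an extra polynomial decay $(1+t/\rho(x))^{-M}$. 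Integrating in $z$ produces a factor of order $y^{-(\ell-k+m)}$, which, combined with the hypothesis $\|\partial_b^k P_b f|_{b=y/2}\|_\infty\le C S^P_\alpha[f]\,y^{-k+\alpha}$, yields
$$\Big\|\tfrac{\partial_y^\ell P_y f}{\rho(\cdot)^m}\Big\|_\infty\le C\,S^P_\alpha[f]\,y^{-(\ell+m)+\alpha}.$$
The corner case $\ell=k$, $m=0$ is the hypothesis itself; for $\ell=k$, $m\ge 1$, the same argument applies after invoking Lemma \ref{Poisson est}(a) in place of (b) to handle the undifferentiated kernel $P_{y/2}(x,z)$.

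Next, for $j<k$ with $m+j\ge k$ (so that $m\ge k-j\ge 1$), I would take the estimate just established at $\ell=k$ and integrate $k-j$ times from $y$ to $\infty$. Since Proposition \ref{Poisoninfinito} guarantees $\lim_{u\to\infty}\partial_u^r P_u f(x)=0$ for every $r\in\N\cup\{0\}$, iterating the identity $\partial_u^{r-1}P_u f(x)=-\int_u^\infty\partial_s^r P_s f(x)\,ds$ and dividing by $\rho(x)^m$ shows that, at the $i$-th step,
$$\tfrac{|\partial_y^{k-i-1}P_y f(x)|}{\rho(x)^m}\le C\,S^P_\alpha[f]\int_y^\infty s^{-(k-i+m)+\alpha}\,ds,$$
and convergence at infinity holds precisely when $m\ge i+1$. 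Since this condition is met for every $i\in\{0,\dots,k-j-1\}$ under the hypothesis $m\ge k-j$, iterating $k-j$ times produces the target bound $C\,S^P_\alpha[f]\,y^{-(j+m)+\alpha}$.

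The main technical obstacle, absent in the heat analogue, is the polynomial (rather than Gaussian) character of the Poisson decay: in Lemma \ref{Poisson est}(b) the $|x-z|$ decay and the $\rho$-decay live inside the same negative power, so one must split $N=m+M$ carefully, ensuring that after extracting $\rho(x)^m$ via $(1+t/\rho(x))^{-m}\le(\rho(x)/t)^m$ the residual decay is still integrable over $\R^n$ and produces the correct power of $y$. Once this bookkeeping is arranged, the Poisson argument parallels the heat argument essentially verbatim, and the integration-at-infinity step is direct.
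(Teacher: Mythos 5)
Your proposal is correct and follows essentially the same route as the paper: semigroup splitting $\partial_y^\ell P_yf=C_\ell\int\partial_v^{\ell-k}P_v(x,z)|_{v=y/2}\,\partial_u^kP_uf(z)|_{u=y/2}\,dz$ for $\ell\ge k$, extraction of $\rho(x)^m$ from the decay factor in Lemma \ref{Poisson est} using $(|x-z|^2+y^2)^{1/2}\ge y$, and then integration in $y$ (justified by the vanishing of the derivatives at infinity from Proposition \ref{Poisoninfinito}) to reach $j<k$. Your bookkeeping of the convergence condition $m\ge i+1$ at each integration step, and the use of part (a) of Lemma \ref{Poisson est} for the undifferentiated kernel when $\ell=k$, are consistent with (indeed slightly more explicit than) the paper's argument.
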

				\begin{proof}  For  $\ell > k$, by the semigroup property and Lemma \ref{Poisson est} we get that
					\begin{align*}
					\left|\frac{\partial_y^\ell P_yf(x)}{\rho(x)^m}\right|&=\left|\frac{C_\ell }{\rho(x)^m}\int_{\R^n} \partial_v^{\ell-k}P_{v}(x,z)|_{v=y/2}\partial_u^kP_uf(z)|_{u=y/2}dz\right|\\
					&\le \frac{C_\ell \|\partial_u^k P_uf|_{u=y/2}\|_\infty}{\rho(x)^m}\int_{\R^n} \frac{1}{(|x-z|^2+y^2)^{\frac{n+\ell-k}{2}}}\left(\frac{\rho(x)}{y} \right)^{m}dz\\
					&\le  C_\ell	S^P_\alpha[f]y^{-(m+\ell)+\alpha}, \:\; x\in\R^n.
					\end{align*}

						If  $j\le k$, since the $y-$derivatives of $P_yf(x)$ tend to zero as $y\to \infty$, we  integrate $\ell-j$ times the previous estimate  and we get the result.

				\end{proof}

				{\it Proof  of Theorem  \ref{tam3}.}
					By using Proposition \ref{Poisoninfinito} we have 
					\begin{align*}
					|f(x)| &\le  \sup_{0< y < \rho(x)} |P_yf(x) | \\ &\le 
					\sup_{0< y < \rho(x)} |P_yf(x)-P_{\rho(x)}f(x) | + 
					|P_{\rho(x)}f(x)| \\ &= I+II.
					\end{align*}	
					Let $k = [\alpha]+1$. By using Lemma \ref{cambiaryrhoP} with $j=0$ and $m = k$ we have 
					\begin{align*}
					II=|P_{\rho(x)}f(x)|= \left|\frac{P_{\rho(x)}f(x)}{\rho(x)^{k}} \right|\rho(x)^{k}\le C 	S^P_\alpha[f] (\rho(x))^{-k+\alpha}\rho(x)^{k}=C\,	S^P_\alpha[f]\rho(x)^\alpha.
					\end{align*}
					
					Now we shall estimate  $I$. If $\alpha$ is not integer, by Lemma \ref{cambiaryrhoP} with $j=1$ and $m =k-1$  we have that
					\begin{align*} 
					I &\le   \rho(x)^{k-1} \sup_{0< y < \rho(x)} \int_y^{\rho(x)} \left|\frac{\partial_z P_ z f(x)}{\rho(x)^{k-1}}\right| dz   \le C\,
						S^P_\alpha[f] \rho(x)^{k-1} \sup_{0< y < \rho(x)} \int_y^{\rho(x)}  z^{-k+\alpha} dz  \\ &\le 
					C\,	S^P_\alpha[f]\rho(x)^{k-1}   \sup_{0< y < \rho(x)} ((\rho(x))^{-(k-1)+\alpha}- y^{-(k-1)+\alpha}) \le C\,	S^P_\alpha[f] \rho(x)^{\alpha}.
					\end{align*}
					When $\alpha$ is an integer, we  write 
					\begin{align*}
					I&= \sup_{0< y < \rho(x)}\left| \int_y^{\rho(x)}{\partial_z P_ z f(x)}dz\right|\\&=\sup_{0< y < \rho(x)}\left| \int_y^{\rho(x)}\left(-\int_{z}^{\rho(x)}\partial_u^2 P_ u f(x)du+\partial_v P_v f(x)|_{v=\rho(x)}\right)dz\right|.
					\end{align*}		
					By Lemma \ref{cambiaryrhoP} with $j=2$ and $m= k-2$, since $k= \alpha+1$,  we get
					\begin{align*}
					\Big| &\int_y^{\rho(x)}\int_{z}^{\rho(x)}\partial_u^2 P_ u f(x)dudz\Big|=\rho(x)^{k-2}\left| \int_y^{\rho(x)}\int_{z}^{\rho(x)}\frac{\partial_u^2 P_ u f(x)}{\rho(x)^{k-2}}dudz\right|\\
					&\le C\,	S^P_\alpha[f]\rho(x)^{\alpha-1} \int_y^{\rho(x)}\int_{z}^{\rho(x)}u^{-1}dudz= C\,	S^P_\alpha[f] \rho(x)^{\alpha-1} \int_y^{\rho(x)}(\log(\rho(x))-\log z)dz\\
					&= C\,	S^P_\alpha[f]\rho(x)^{\alpha-1}\big[\log(\rho(x))(\rho(x)-y)-(\rho(x)\log(\rho(x))-\rho(x)-y\log y+y)\big]\\
					&=C\,	S^P_\alpha[f] \rho(x)^{\alpha-1}\big[y\log \big(\frac{y}{\rho(x)}\big)+\rho(x)-y\big]\le C\,	S^P_\alpha[f] \rho(x)^{\alpha}.
					\end{align*}	
					For the second summand of $I$, Lemma \ref{cambiaryrhoP}, with $j=1$ and $m= k-1$ applies, 	 so 
					\begin{multline*} 
					\sup_{0< y < \rho(x)}	(\rho(x)-y)|\partial_v P_v f(x)|_{v=\rho(x)}|=	\sup_{0< y < \rho(x)}	(\rho(x)-y)\rho(x)^{k-1}\frac{|\partial_v P_v f(x)|_{v=\rho(x)}|}{\rho(x)^{k-1}}\\ \le 	C\,	S^P_\alpha[f]\sup_{0< y < \rho(x)}	(\rho(x)-y)\rho(x)^{\alpha} (\rho(x))^{-1} \le C\,	S^P_\alpha[f]\rho(x)^{\alpha}.	
					\end{multline*}

		\edproof

			To prove Theorem \ref{identities4}, we need to define an auxiliary class of Lipschitz functions by means of the classical  Poisson semigroup,  $\tilde{P}_y=e^{-y\sqrt{-\Delta}}$. Again, the crucial difference between this class and the one defined by Stein in \cite{Stein} is that the functions don't need to be bounded.

		We define  $\Lambda_{\alpha}^{\tilde{P}}$  as the collection of functions satisfying   $M^P[f] < \infty$ and
			\begin{equation*} \Big\|\partial_y^k{\tilde{P}}_y f \Big\|_{L^\infty(\mathbb{R}^{n})}\leq C_\alpha y^{-k+\alpha},\;\: \, {\rm with }\, k=[\alpha]+1, y>0. 
			\end{equation*} 
			We denote by  $S^{\tilde{P}}_\alpha[f]$ as the infimum of the constants $C_\alpha$ above.
			
			%							The norm in that case is 
			%							$$\|f\|_{\Lambda_{\alpha}^{\tilde{P}}}=   S^{\tilde{P}}_\alpha[f] +\|f\|_{L^\infty(\mathbb{R}^n)}.$$
	
	\begin{remark}\label{goingtoinfinityP} Observe that the space $\Lambda_{\alpha}^{\tilde{P}}$ is well defined, because if $f$ is a function such that $M^P[f] < \infty$, then 
		\begin{itemize}
			\item[(i)] $|\partial_{x_i}^m\partial_y^\ell\tilde{P}_yf(x)|\to 0$ as $y\to \infty$ as far as $m+\ell \ge k\ge 1$.   
			Indeed, 
			\begin{align*}
			|\partial_{x_i}^m\partial_y^\ell\tilde{P}_yf(x)| &\le C\int_{|x-z|<|x|} \frac{ |f(z)| }{(|x-z|+y)^{n+k}}dz +C\int_{|x-z|>|x|}  \frac{ |f(z)| }{(|z|+y)^{n+k}}  dz\\ &\le  C\frac1{y^{n+k}} \int_{|x-z|<|x|}  |f(z)| dz +C\int_{|x-z|>|x|}  \frac{ |f(z)| }{(|z|+y)^{n+k}}  dz. 
			\end{align*}
			\noindent Both summands tend to zero, the second one by dominated convergence.
			\item[(ii)]  $ \lim_{y\to 0} \tilde{P}_yf(x) = f(x)$   a.e. $x\in\R^n$. This can be proved as we did in \eqref{Poissonlejos} and by using the $a.e.$ convergence of the classical Poisson semigroup for $L^1$ functions. 
			%It is enough to observe that for $y<1$, 
%			\begin{multline*}  \int_{|x-z|> 2|x|} \frac{y}{(|x-z|+y)^{n+1}} |f(z)| dz \\  \le C\int_{|z|<1}  \frac{y}{(|x|+y)^{n+1}} |f(z)| dz +C \int_{|z|>1} \frac{y}{(|z|+y)^{n+1}} |f(z)| dz \\ \le
%			\frac{C y}{|x|^{n+1}}  \int_{|z|<1}  |f(z)| dz + Cy \int_{|z|>1} \frac{1}{(|z|+1)^{n+1}} |f(z)| dz 
%			\end{multline*}
		\end{itemize}		
	\end{remark}

		Moreover, we can prove the following results  analogously as we did for the heat semigroup.
	
		\begin{proposition}\label{subirelkP}
			Let {$\alpha>0$}, $k=[\alpha]+1$ and $f$ be a function satisfying  $M^P[f] < \infty$. Then, $\| \partial_ y^{k} \tilde{P}_y f\|_ {L^\infty(\mathbb{R}^n)} \le C_k y^{-k+ \alpha}$ if, and only if,   for $m \ge k$, $\| \partial_ y^{m} \tilde{P}_y f\|_ {L^\infty(\mathbb{R}^n)} \le C_m y^{-m+ \alpha}$.
		\end{proposition}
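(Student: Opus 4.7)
The plan is to follow the template of Proposition \ref{subir el $k$}, substituting the classical Poisson semigroup for the heat semigroup and using the analogous ingredients already developed (or implicitly available) for $\tilde{P}_y$. The starting point is that, because $M^P[f]<\infty$, Remark \ref{goingtoinfinityP}(i) guarantees that $\partial_y^\ell \tilde{P}_y f(x) \to 0$ as $y\to\infty$ for every $\ell\ge 1$, and all the relevant derivatives exist pointwise and can be differentiated under the integral sign.

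For the forward implication ($k \Rightarrow m$ with $m \ge k$), I would exploit the semigroup property $\tilde{P}_y = \tilde{P}_{y/2}\tilde{P}_{y/2}$ and distribute derivatives to write
\begin{equation*}
\partial_y^m \tilde{P}_y f(x) = c_{m,k} \, \partial_y^{m-k} \tilde{P}_{y/2}\!\left( \partial_u^k \tilde{P}_u f \right)\!\big|_{u=y/2}(x).
\end{equation*}
Taking absolute values, pulling out the assumed sup-bound $\|\partial_u^k \tilde{P}_u f\|_\infty \le C_k (y/2)^{-k+\alpha}$, and using the standard kernel estimate $\int_{\R^n} |\partial_y^j \tilde{P}_y(x,z)|\,dz \le C y^{-j}$ (a direct consequence of the classical Poisson-kernel size bound, equivalent to Lemma \ref{Poisson est}(b) with $V=0$), yields $\|\partial_y^m \tilde{P}_y f\|_\infty \le C_m y^{-m+\alpha}$.

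For the converse, I would proceed by downward integration from $m$ to $k$. Fix $m > k$, assume $\|\partial_y^m \tilde{P}_y f\|_\infty \le C_m y^{-m+\alpha}$, and use Remark \ref{goingtoinfinityP}(i) to write
\begin{equation*}
\partial_y^{m-1} \tilde{P}_y f(x) = -\int_y^\infty \partial_s^m \tilde{P}_s f(x)\, ds,
\end{equation*}
so that $|\partial_y^{m-1} \tilde{P}_y f(x)| \le C_m \int_y^\infty s^{-m+\alpha}\,ds \le C'_m y^{-(m-1)+\alpha}$, where convergence at infinity is guaranteed by $-m+\alpha < -1$ for $m \ge k+1 = [\alpha]+2$. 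Iterating $m-k$ times produces the bound at level $k$.

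The only delicate point is checking the integrability at $\infty$ at each iteration step: after descending from $m$ to $j$, one needs $-j+\alpha-1<0$, i.e. $j>\alpha-1$, which is automatic for $j \ge k = [\alpha]+1$. So no obstacle is substantive; the whole argument is a routine Poisson-semigroup adaptation of Proposition \ref{subir el $k$}, and I would present it briefly, remarking that the details mirror the heat case and can be left to the reader, just as the author does for Proposition \ref{subir el $k$cla}.
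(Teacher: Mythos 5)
Your proposal is correct and takes essentially the same route the paper intends: the paper gives no separate proof of this proposition, stating it is proved ``analogously'' to the heat case, and your argument is exactly the proof of Proposition \ref{subir el $k$} transplanted to $\tilde{P}_y$ (semigroup splitting plus the kernel bound $\int_{\R^n}|\partial_y^j\tilde{P}_y(z)|\,dz\le Cy^{-j}$ for the upward direction, downward integration using the vanishing at $y\to\infty$ from Remark \ref{goingtoinfinityP} for the converse). One immaterial nit: the convergence condition at each downward step from level $j+1$ to level $j$ is $\int_y^\infty s^{-(j+1)+\alpha}\,ds<\infty$, i.e.\ $j>\alpha$ rather than $j>\alpha-1$, but this still holds for every $j\ge k=[\alpha]+1>\alpha$, so the iteration goes through unchanged.
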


		\begin{theorem}\label{calor-poisson}
				Let $\alpha>0$ and $f$ a function such that $M^P[f] < \infty$.  If $f\in \Lambda_{\alpha/2}^W$, then $f\in\Lambda_{\alpha}^P$. Moreover, $\displaystyle S_\alpha^P[f]  \le C S_\alpha^W[f] .$
			\end{theorem}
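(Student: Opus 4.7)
The plan is to exploit the subordination formula \eqref{Poissonformula}, writing
\begin{equation*}
P_y f(x) = \int_0^\infty \phi(y,\tau)\,W_\tau f(x)\,d\tau, \qquad \phi(y,\tau) := \frac{y}{2\sqrt{\pi}\,\tau^{3/2}}\,e^{-y^2/(4\tau)},
\end{equation*}
and to convert $y$-derivatives of $\phi$ into $\tau$-derivatives that can be shifted onto $W_\tau f$ by integration by parts. The key algebraic observation is that if $\psi_0(y,\tau):=(4\pi\tau)^{-1/2}e^{-y^2/(4\tau)}$ denotes the one-dimensional Gauss kernel, then $\phi=-2\,\partial_y\psi_0$ and $\psi_0$ satisfies the one-dimensional heat equation $\partial_\tau\psi_0=\partial_y^2\psi_0$. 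Setting $k=[\alpha]+1$, iteration of these identities yields
\begin{equation*}
\partial_y^k \phi(y,\tau) =
\begin{cases}
\partial_\tau^{k/2}\phi(y,\tau), & k \text{ even},\\
-2\,\partial_\tau^{(k+1)/2}\psi_0(y,\tau), & k \text{ odd}.
\end{cases}
\end{equation*}
A direct case analysis on the parity of $[\alpha]$ shows that in both cases the resulting order $m$ of $\tau$-derivatives equals exactly $k_W:=[\alpha/2]+1$.

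Differentiating under the integral (legitimate by Lemmas \ref{Cauchy} and \ref{sizeheat}) and integrating by parts $m$ times in $\tau$, one obtains
\begin{equation*}
\partial_y^k P_y f(x) = (-1)^m \int_0^\infty K(y,\tau)\,\partial_\tau^m W_\tau f(x)\,d\tau,
\end{equation*}
with $K=\phi$ in the even case and $K=-2\psi_0$ in the odd case. Since $f\in\Lambda_{\alpha/2}^W$ and $m=k_W$, Lemma \ref{cambiaryrho} provides $|\partial_\tau^m W_\tau f(x)|\le C\,S_\alpha^W[f]\,\tau^{-m+\alpha/2}$. The substitution $u=y^2/(4\tau)$ then reduces the $\tau$-integral to a Gamma integral; the same parity analysis gives $m-\alpha/2\in(0,1/2]$ in the even case and $m-\alpha/2\in(1/2,1]$ in the odd case, which is precisely what is needed for the Gamma integrals $\Gamma(m+1/2-\alpha/2)$ and $\Gamma(m-1/2-\alpha/2)$, respectively, to converge near $u=0$. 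Tracking the powers of $y$ produced by the substitution one arrives at $|\partial_y^k P_y f(x)|\le C\,S_\alpha^W[f]\,y^{-k+\alpha}$, which is the estimate defining membership in $\Lambda_\alpha^P$.

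The main delicate step is justifying the vanishing of the boundary terms arising in the $m$ integrations by parts. At $\tau=0$, the factor $e^{-y^2/(4\tau)}$ present in $K$ and in every one of its $\tau$-derivatives decays superpolynomially in $1/\tau$ and dominates any blow-up of $\partial_\tau^j W_\tau f(x)$, whether that blow-up comes from Lemma \ref{cambiaryrho} for $j\ge k_W$ or from the local continuity $W_\tau f\to f$ of Lemma \ref{claim} for smaller $j$. At $\tau=\infty$, $\partial_\tau^j K(y,\tau)$ decays like $\tau^{-3/2-j}$ or $\tau^{-1/2-j}$ while $\partial_\tau^j W_\tau f(x)\to 0$ for every $j$ by the heat size condition built into the definition of $\Lambda_{\alpha/2}^W$. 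Once these boundary terms are discharged, the remainder is a routine calculation of elementary Gamma integrals.
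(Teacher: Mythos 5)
Your proof is correct and follows essentially the same route as the paper: the subordination formula, conversion of $y$-derivatives of the subordination kernel into $\tau$-derivatives via the heat-equation identity, and integration by parts to land $\partial_\tau^{[\alpha/2]+1}$ on $W_\tau f$. The only difference is technical: the paper avoids your parity analysis by estimating the derivative of even order $2([\alpha/2]+1)\ge[\alpha]+1$ (which needs only the single identity $\partial_y^2\phi=\partial_\tau\phi$ and the proposition on raising the order of the derivative), whereas you treat the minimal order $[\alpha]+1$ directly; both computations close, and your handling of the boundary terms is sound (though for the intermediate derivatives at $\tau\to 0$ the relevant polynomial control comes from Lemma \ref{sizeheat} rather than Lemma \ref{claim}).
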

\begin{proof}
%	Observe that, for every $y>0$,  $$\int_{\R^n}e^{-\frac{|z|^2}{y}}|f(z)|dz=\int_{\R^n}\frac{e^{-\frac{|z|^2}{y}}(|z|+1)^{n+1}|f(z)|}{(|z|+1)^{n+1}}dz\le C\, M^P[f]\, (1+y^{\frac{n+1}{2}}).$$
Let $k=[\alpha/2]+1$ and $f\in \Lambda_{\alpha/2}^W$,  then  $[\alpha]+1=[\alpha/2+\alpha/2]+1\le[\alpha/2]+[\alpha/2]+2=2k$. By Proposition \ref{subirelkP}  it is enough to prove that $\|\partial_y^{2k}P_yf\|_\infty\le Cy^{-(2k)+\alpha}$. 

Since $\partial_y^2\Big( \frac{ye^{-\frac{y^2}{4\tau}}}{\tau^{3/2}}\Big)=\partial_\tau \Big(\frac{ye^{-\frac{y^2}{4\tau}}}{\tau^{3/2}}\Big)$, $k$-times integration by parts give
			\begin{align*}
			|\partial_y^{2k}P_yf(x)|&=\left| \frac{1}{2\sqrt{\pi}}\int_0^\infty\partial_y^{2k}\left(\frac{ye^{-\frac{y^2}{4\tau}}}{\tau^{3/2}}\right)e^{-\tau \LL}f(x)d\tau\right|=	\left| \frac{1}{2\sqrt{\pi}}\int_0^\infty\partial_\tau^{k}\left(\frac{ye^{-\frac{y^2}{4\tau}}}{\tau^{3/2}}\right)e^{-\tau \LL}f(x)d\tau\right|\\
			&=\frac{1}{2\sqrt{\pi}} \left|\int_0^\infty(-1)^k\left(\frac{ye^{-\frac{y^2}{4\tau}}}{\tau^{3/2}}\right)\partial_\tau^{k}e^{-\tau \LL}f(x)d\tau\right| \le C \,S_\alpha^W[f] \int_0^\infty \frac{ye^{-\frac{y^2}{4\tau}}}{\tau^{3/2}}\tau^{-k+\alpha/2}d\tau\\
			&\le C\,S_\alpha^W[f] \left(\frac{1}{y^2}\int_0^{y^2} \frac{y^3}{\tau^{3/2}}e^{-\frac{y^2}{4\tau}} \tau^{-k+\alpha/2}d\tau+\int_{y^2}^\infty \tau^{-k+\alpha/2}\frac{d\tau}{\tau}\right)\\
			&\le C\,S_\alpha^W[f] y^{-2k+\alpha}.
			\end{align*}
			\end{proof}
			
			The following Lemma is parallel to Lemma \ref{derivX}. We leave the details of the proof to the interested reader.
		\begin{lemma}\label{cambiaryderx}
			Let $\alpha>0$ and $k=[\alpha]+1$. If $f\in \Lambda_{\alpha}^{\tilde{P}}$,  then for every $j,m\in\N\cup \{0\}$ such that ${m}+j\ge k$,  there exists a $C_{m,j}>0$ such that
			$$
			\left\| {\partial_{x_i}^m\partial_y^j \tilde{P}_yf}\right\|_{\infty}\le C\,S^{\tilde{P}}_\alpha[f]y^{-(m+j)+\alpha}, \text{ for every } i=1\dots,n.
			$$
		\end{lemma}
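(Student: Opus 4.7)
The plan is to mirror the proof of Lemma \ref{derivX} verbatim, replacing the classical heat kernel by the classical Poisson kernel. The two ingredients needed are a kernel estimate and a semigroup identity that sends derivatives across the convolution.

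First, I would record the kernel bound. Setting $V\equiv 0$ in Lemma \ref{Poisson est} (equivalently, using the well-known subordination-formula estimates for $\tilde{P}_y$), one has
$$|\partial_{x_i}^m\partial_v^\ell \tilde{P}_v(x-z)|\le C\,(|x-z|^2+v^2)^{-(n+m+\ell)/2},$$
so that $\int_{\R^n}|\partial_{x_i}^m\partial_v^\ell \tilde{P}_v(x-z)|\,dz\le C\,v^{-(m+\ell)}$ whenever $m+\ell\ge 1$, and is bounded by a constant when $m+\ell=0$. In both cases the resulting estimate takes the form $C\,v^{-(m+\ell)}$.

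For the case $j\ge k$, I would use the semigroup identity $\tilde{P}_yf=\tilde{P}_{y/2}(\tilde{P}_{y/2}f)$ and push $k$ of the $j$ time-derivatives onto the inner factor, leaving the remaining $j-k$ time-derivatives and all $m$ spatial derivatives on the outer Poisson kernel:
$$\partial_{x_i}^m\partial_y^j\tilde{P}_yf(x)=C_{j,k}\int_{\R^n}\partial_{x_i}^m\partial_v^{j-k}\tilde{P}_v(x-z)\big|_{v=y/2}\,\partial_u^k\tilde{P}_uf(z)\big|_{u=y/2}\,dz.$$
By Proposition \ref{subirelkP} we have $\|\partial_u^k\tilde{P}_uf|_{u=y/2}\|_\infty\le C\,S^{\tilde{P}}_\alpha[f]\,y^{-k+\alpha}$, which combined with the kernel integral bound $C\,y^{-(m+j-k)}$ yields the desired estimate $C\,S^{\tilde{P}}_\alpha[f]\,y^{-(m+j)+\alpha}$.

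For the case $j<k$, the hypothesis $m+j\ge k$ forces $m\ge k-j\ge 1$, so Remark \ref{goingtoinfinityP}(i) guarantees $\lim_{y\to\infty}\partial_{x_i}^m\partial_y^\ell\tilde{P}_yf(x)=0$ for every $\ell$ with $m+\ell\ge k$, in particular for $\ell=j+1,\dots,k$. Starting from the bound established for $\ell=k$, I would integrate $k-j$ times via
$$\partial_{x_i}^m\partial_y^{\ell-1}\tilde{P}_yf(x)=-\int_y^\infty \partial_{x_i}^m\partial_v^\ell \tilde{P}_vf(x)\,dv,$$
each integration lowering the exponent on $y$ by one. The integrals converge at infinity because at every stage the exponent $m+\ell-\alpha$ exceeds $1$ (since $k=[\alpha]+1>\alpha$ and $m+\ell\ge k$). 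After $k-j$ steps the constants accumulate into a new $C_{m,j}$ and we recover the announced estimate.

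The main obstacle is bookkeeping: one must verify that the semigroup identity in Step 1 can indeed be derived for arbitrary $j\ge k$ and arbitrary $m$ (which requires the kernel decay to justify differentiating under the integral sign), and that the integrations in Step 2 preserve integrability at infinity — both of which reduce, as above, to the choice $k=[\alpha]+1$ and the hypothesis $m+j\ge k$.
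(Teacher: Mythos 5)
Your proposal is correct and is exactly the argument the paper intends: it states that the lemma is ``parallel to Lemma \ref{derivX}'' and leaves the details to the reader, and you have supplied precisely those details — the Poisson-kernel derivative bounds, the semigroup identity for $j\ge k$, and the downward integration using Remark \ref{goingtoinfinityP} for $j<k$, with the convergence at infinity correctly reduced to $m+\ell-\alpha>1$ via $k=[\alpha]+1>\alpha$.
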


			\begin{theorem}\label{nuevoStein2}
				Let $0<\alpha < 2$. Then $f\in \Lambda_{\alpha}^{\tilde{P}}$, if and only $M^P[f]<\infty$ and 
				$$N_\alpha[f]=\sup_{|z|>0}\frac{\|f(\cdot+z)+f(\cdot-z)-2f(\cdot)\|_\infty}{|z|^\alpha} <\infty.$$
			\end{theorem}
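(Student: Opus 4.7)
The plan is to prove Theorem~\ref{nuevoStein2} by adapting the strategy of the heat analogue Theorem~\ref{nuevoStein}, using a reproducing-formula argument in the forward direction that handles uniformly the two subcases $k=1$ (when $0<\alpha<1$) and $k=2$ (when $1\le\alpha<2$). The converse reduces to a symmetrized kernel estimate, while the forward direction hinges on the second-order $y$-derivative estimate, which by Proposition~\ref{subirelkP} is available in both subcases.

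For the implication $(\Leftarrow)$, assume $M^P[f]<\infty$ and $N_\alpha[f]<\infty$. Since the classical Poisson kernel is even in $z$ and $\int_{\R^n}\tilde{P}_y(z)\,dz=1$, one has $\int_{\R^n}\partial_y^k \tilde{P}_y(z)\,dz=0$ for every $k\ge 1$. Symmetrizing gives
$$\partial_y^k \tilde{P}_y f(x)=\frac{1}{2}\int_{\R^n}\partial_y^k \tilde{P}_y(z)\bigl[f(x+z)+f(x-z)-2f(x)\bigr]\,dz,$$
and the kernel bound $|\partial_y^k \tilde{P}_y(z)|\le C(|z|+y)^{-(n+k)}$ together with the Zygmund hypothesis yield
$$|\partial_y^k \tilde{P}_y f(x)|\le C\,N_\alpha[f]\int_{\R^n}\frac{|z|^\alpha}{(|z|+y)^{n+k}}\,dz = C\,N_\alpha[f]\,y^{\alpha-k},$$
where the change of variable $z=yw$ produces the final homogeneity and the integral converges because $k=[\alpha]+1>\alpha$.

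For the forward direction $(\Rightarrow)$, fix $x,z$ and set $F(s):=\Delta_z^2 \tilde{P}_s f(x):=\tilde{P}_s f(x+z)+\tilde{P}_s f(x-z)-2\tilde{P}_s f(x)$. Combining the Taylor inequality $|\Delta_z^2 h(x)|\le |z|^2\|\nabla^2 h\|_\infty$ with Lemma~\ref{cambiaryderx} (applied with $m=2$ and $j=0,1,2$, each of which satisfies $m+j\ge k$ throughout $0<\alpha<2$) yields
$$|F(s)|\le C|z|^2 s^{\alpha-2},\quad |F'(s)|\le C|z|^2 s^{\alpha-3},\quad |F''(s)|\le C|z|^2 s^{\alpha-4}.$$
Since $F(s)\to 0$ and $sF'(s)\to 0$ as $s\to\infty$, integration by parts gives the reproducing identity $F(y)=\int_y^\infty (s-y)F''(s)\,ds$ valid for every $y>0$. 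Passing $y\to 0^+$, the left-hand side converges to $\Delta_z^2 f(x)$ a.e.\ by Remark~\ref{goingtoinfinityP}(ii), and dominated convergence on the right (using the dominating function described in the next paragraph) produces
$$\Delta_z^2 f(x)=\int_0^\infty s\,\partial_s^2\bigl[\Delta_z^2 \tilde{P}_s f\bigr](x)\,ds.$$

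To finish, split the integral at $s=|z|$. On the tail $s>|z|$, the Taylor bound $|F''(s)|\le C|z|^2 s^{\alpha-4}$ yields $\int_{|z|}^\infty s\cdot C|z|^2 s^{\alpha-4}\,ds=C|z|^\alpha/(2-\alpha)$ (finite because $\alpha<2$). On $0<s<|z|$ one uses instead the pointwise bound $|F''(s)|\le 4\|\partial_s^2\tilde{P}_s f\|_\infty\le Cs^{\alpha-2}$, which holds directly when $k=2$ and by Proposition~\ref{subirelkP} when $k=1$; this gives $\int_0^{|z|}s\cdot Cs^{\alpha-2}\,ds=C|z|^\alpha/\alpha$. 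Adding, $|\Delta_z^2 f(x)|\le C|z|^\alpha$ uniformly in $x,z$, hence $N_\alpha[f]<\infty$. The most delicate step is the rigorous derivation of the reproducing identity and the associated passage to the limit $y\to 0$: the critical point is that the two independent bounds on $F''(s)$---the Taylor/$x$-derivative bound giving the crucial $|z|^2$ cancellation but bad integrability at $s=0$, and the uniform $y$-derivative bound giving no $|z|$ cancellation but good behaviour at $s=0$---together provide integrability at both endpoints, and this dichotomy is precisely what makes the argument work exactly for the range $0<\alpha<2$.
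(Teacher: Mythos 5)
Your proposal is correct. The converse direction coincides with the paper's (the paper simply invokes the symmetrization argument of Theorem \ref{nuevoStein}: evenness of the kernel, $\int_{\R^n}\partial_y^k\tilde{P}_y(z)\,dz=0$, and the bound $|\partial_y^k\tilde{P}_y(z)|\le C(|z|+y)^{-(n+k)}$), so there is nothing to add there. The forward direction, however, is genuinely different. The paper adds and subtracts $\tilde{P}_yf$ at the three points $x\pm z$, $x$, bounds the smooth part by $C\,S^{\tilde P}_\alpha[f]\,y^{-2+\alpha}|z|^2$ via second spatial derivatives, and then must treat the approximation term $f-\tilde{P}_yf$ by three separate case analyses: for $0<\alpha<1$ each difference $|\tilde{P}_yf-f|$ is bounded individually by integrating $\partial_u\tilde{P}_uf$; for $1<\alpha<2$ the second-difference structure must be retained inside the $u$-integral and a gradient bound used (since $\|\partial_u\tilde{P}_uf\|_\infty\le Cu^{-1+\alpha}$ is not available when $k=2$); and $\alpha=1$ requires a further two-step integration. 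Your reproducing identity
$$f(x+z)+f(x-z)-2f(x)=\int_0^\infty s\,\partial_s^2\bigl[\tilde{P}_sf(x+z)+\tilde{P}_sf(x-z)-2\tilde{P}_sf(x)\bigr]\,ds,$$
justified by the decay of $F$ and $sF'$ at infinity and by Remark \ref{goingtoinfinityP}(ii) as $y\to0$, replaces all three cases with a single split at $s=|z|$, playing the Taylor bound $|F''(s)|\le C|z|^2s^{\alpha-4}$ against the uniform bound $|F''(s)|\le Cs^{\alpha-2}$ (available for all $0<\alpha<2$ by Proposition \ref{subirelkP} since $2\ge k$). This is essentially the classical Taibleson-type argument; it is cleaner and scales better, at the cost of having to justify the integration by parts and the dominated-convergence passage $y\to0$, which you do correctly. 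The only cosmetic point is that Lemma \ref{cambiaryderx} is stated for pure derivatives $\partial_{x_i}^m$ while the Taylor bound needs the full Hessian, but the same semigroup argument yields mixed derivatives, exactly as the paper itself uses $\nabla_u^2$ in the proof of Theorem \ref{nuevoStein}.
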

			\begin{proof}
				Let  $x\in\R^n$. We can write, for every $y>0$, $z\in\R^n$,
					\begin{align*}
					|f(x+z)&+f(x-z)-2f(x)|\le |\tilde{P}_yf(x+z)-f(x+z)+\tilde{P}_yf(x-z)-f(x-z)\\&-2(\tilde{P}_yf(x)-f(x))|+ |\tilde{P}_yf(x+z)-\tilde{P}_yf(x) +\tilde{P}_yf(x-z)-\tilde{P}_yf(x)|\\&=A+B.
					\end{align*}
				By using Lemma \ref{cambiaryderx} we can proceed  as in the proof of Theorem \ref{nuevoStein}. We have  \begin{align*}B=|\tilde{P}_yf(x+z)-\tilde{P}_yf(x) +\tilde{P}_yf(x-z)-\tilde{P}_yf(x)|&\le C\,S^{\tilde{P}}_\alpha[f]y^{-2+\alpha}|z|^2, \:\: \end{align*}
		If $0<\alpha<1$, by using Remark \ref{goingtoinfinityP}  we have that
					$$
					|\tilde{P}_yf(x)-f(x)|=\left|\int_0^y \partial_u \tilde{P}_uf(x)du \right|\le C\,S^{\tilde{P}}_\alpha[f]\int_0^y u^{-1+\alpha}du =C\,S^{\tilde{P}}_\alpha[f]y^{\alpha},
					$$
					and the same for the other two summands of $A$. 
				
				If $1< \alpha<2$, by proceeding as in the proof of Theorem \ref{nuevoStein}, by Lemma \ref{cambiaryderx} we have that 
					\begin{align*}A&=\left|\int_0^y (\partial_u \tilde{P}_uf(x+z)+\partial_u \tilde{P}_uf(x-z)-2\partial_u \tilde{P}_uf(x)) du\right|\\ 	 	
					&=\left| \int_0^y\int_0^1(\nabla_w \partial_u\tilde{P}_uf({w})_{|_{w=x+\theta z}}\cdot z-\nabla_v \partial_u\tilde{P}_uf({v})_{|_{v=x-\theta z}}\cdot z) d\theta du\right|\\
					&\le  C\,S^{\tilde{P}}_\alpha[f]\int_0^yu^{-2+\alpha}du|z|\le C \,S^{\tilde{P}}_\alpha[f]y^{-1+\alpha}|z|. \end{align*}
Thus, by choosing $y=|z|$ in each case we get what we wanted. 

For $\alpha=1$, by using that
	$\partial_{u} \tilde{P}_{u}f(x) = -\int_{u}^y \partial^2_{w} \tilde{P}_{w}f(x)dw + \partial_{y} \tilde{P}_{y}f(x)$, we have
					\begin{align*}|A|&\le C  S^{\tilde{P}}_1[f]\int_0^y \int_u^y  w^{-1} dw du + \Big|\int_0^y  \Big((\partial_y \tilde{P}_yf(x+z)+\partial_y \tilde{P}_yf(x-z)-2\partial_y \tilde{P}_yf(x)) \Big) du  \Big|\\ 	 	&= A_1+A_2.
					\end{align*}
Observe that $A_1 \le C S^{\tilde{P}}_1[f]y$. Regarding $A_2$, we proceed as in the case $1<\alpha <2$ and we have 
					\begin{align*}
		A_2 \le  		&\left| y\int_0^1(\nabla_{\tilde{x}} \partial_y\tilde{P}_yf({\tilde{x}})_{|_{\tilde{x}=x+\theta z}}\cdot z-\nabla_{{\tilde{z}}} \partial_y\tilde{P}_yf({\tilde{z}})_{|_{\tilde{z}=x-\theta z}}\cdot z)d\theta \right| \le C\,S^{\tilde{P}}_1[f]\,|z|.	 	 \end{align*}
When  $y=|z|$  we get what we wanted. 

For the converse we proceed as in Theorem \ref{nuevoStein}.				
			
			\end{proof}
			
				\begin{theorem} Let  $0<\alpha\le {2-n/q}$ and $f$ be a function such that $M^P[f] < \infty.$
			If $\rho(\cdot)^{-\alpha} f(\cdot) \in L^\infty(\mathbb{R}^n)$,  then 
					$$\|\partial_y^2P_yf-\partial_y^2\tilde{P}_yf \|_\infty \le C  M_\alpha^{\LL}[f] y^{-2+\alpha}.$$
			\end{theorem}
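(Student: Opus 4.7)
The natural approach is to transfer the problem to the heat semigroup via the subordination formula \eqref{Poissonformula} and exploit the one-dimensional heat-equation identity
\[
\partial_y^2\!\left(\frac{y e^{-y^2/(4\tau)}}{\tau^{3/2}}\right)=\partial_\tau\!\left(\frac{y e^{-y^2/(4\tau)}}{\tau^{3/2}}\right),
\]
which is the same trick used in the proof of Theorem \ref{calor-poisson}. Calling this kernel $h(y,\tau)$, the plan is, first, to write both Poisson semigroups through the subordination identity, take the difference, and apply the identity above to rewrite
\[
\partial_y^2 P_y f(x)-\partial_y^2 \tilde P_y f(x)=\frac{1}{2\sqrt{\pi}}\int_0^\infty \partial_\tau h(y,\tau)\,\bigl(W_\tau f(x)-\tilde W_\tau f(x)\bigr)\,d\tau.
\]
Integration by parts in $\tau$ then yields
\[
\partial_y^2 P_y f(x)-\partial_y^2 \tilde P_y f(x)=-\frac{1}{2\sqrt{\pi}}\int_0^\infty h(y,\tau)\,\bigl(\partial_\tau W_\tau f(x)-\partial_\tau \tilde W_\tau f(x)\bigr)\,d\tau.
\]

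Second, I would apply Theorem \ref{comparacion} directly to bound the integrand by $C\,M_\alpha^{\LL}[f]\,\tau^{-1+\alpha/2}$, obtaining
\[
\bigl|\partial_y^2 P_y f(x)-\partial_y^2 \tilde P_y f(x)\bigr|\le C\,M_\alpha^{\LL}[f]\int_0^\infty \frac{y\,e^{-y^2/(4\tau)}}{\tau^{3/2}}\,\tau^{-1+\alpha/2}\,d\tau.
\]
The remaining integral is computed by the substitution $s=y^2/(4\tau)$, which produces $y^{-2+\alpha}$ times a convergent Gamma integral (convergent since $\alpha\le 2-n/q<2<3$). This gives the stated bound.

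Third, the main thing that requires care is justifying the integration by parts, i.e.\ the vanishing of the boundary terms in $\tau$. At $\tau\to 0^+$ this is straightforward because $h(y,\tau)$ decays super-exponentially in $\tau^{-1}$ while $W_\tau f$ and $\tilde W_\tau f$ are locally bounded by Lemma \ref{sizeheat} (and its classical counterpart, valid because $\rho(\cdot)^{-\alpha}f\in L^\infty$ implies $(1+|\cdot|)^{-\alpha}f\in L^\infty$ via Lemma \ref{Shenlemma}, see Remark \ref{espaciopola}). At $\tau\to\infty$, Lemma \ref{sizeheat} gives $|W_\tau f(x)|\le C\,M_\alpha^{\LL}[f]\,\rho(x)^{\alpha+M}\tau^{-M/2}$ for any $M$, while for $\tilde W_\tau f$ a direct estimate using Lemma \ref{Shenlemma} together with the Gaussian bound on $\tilde W_\tau$ gives polynomial growth of the form $|\tilde W_\tau f(x)|\le C\,\rho(x)^\alpha(1+\sqrt{\tau}/\rho(x))^{\alpha\lambda}$ with $\alpha\lambda<2$; in both cases this is killed by the $y/\tau^{3/2}$ factor of $h(y,\tau)$.

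The integrability required to justify the integration by parts and the final estimate are ensured by the same bounds on $\partial_\tau W_\tau f$ and $\partial_\tau \tilde W_\tau f$, which are finite by Lemma \ref{sizeheat} and Remark \ref{goingtoinfinity} respectively. The hardest step is the one already absorbed into the hypothesis, namely Theorem \ref{comparacion}; everything else is a subordination argument plus an elementary computation. The only genuine technicality here is tracking the correct decay/growth rates at the two endpoints, especially for $\tilde W_\tau f$ where there is no decay from the potential.
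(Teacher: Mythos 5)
Your proof is correct and follows essentially the same route as the paper's: the subordination formula, the identity $\partial_y^2\bigl(y e^{-y^2/(4\tau)}\tau^{-3/2}\bigr)=\partial_\tau\bigl(y e^{-y^2/(4\tau)}\tau^{-3/2}\bigr)$, integration by parts in $\tau$, the bound from Theorem \ref{comparacion}, and the elementary evaluation of the resulting integral. Your careful justification of the vanishing boundary terms (which the paper leaves implicit) and your Gamma-function substitution in place of the paper's splitting at $\tau=y^2$ are minor, correct variations of the same argument.
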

			
			\begin{proof}
%				Observe that, for every $x\in\R^n$, we can write
%				\begin{align*}
%				|\partial_y^2\tilde{P}_yf(x)|\le |\partial_y^2\tilde{P}_yf(x)-\partial_y^2P_yf(x)|+|\partial_y^2P_yf(x)|			\end{align*}
%				and 
%				\begin{align*}
%				|\partial_y^2{P}_yf(x)|\le |\partial_y^2P_yf(x)-\partial_y^2\tilde{P}_yf(x)|+|\partial_y^2\tilde{P}_yf(x)|.
%				\end{align*}
				By subordination formula, integration by parts and and Theorem \ref{comparacion} we have that
				\begin{align*}
				|\partial_y^2P_yf(x)-\partial_y^2\tilde{P}_yf(x)|&=\left|\frac{1}{2\sqrt{\pi}}\int_0^\infty \partial_y^2\left(\frac{ye^{-{\frac{y^2}{4 \tau}}}}{\tau^{3/2}}\right)(W_\tau f-\tilde{W}_{\tau}f){d\tau}\right|\\
				&=\left|\frac{1}{2\sqrt{\pi}}\int_0^\infty \partial_\tau\left(\frac{ye^{-{\frac{y^2}{4 \tau}}}}{\tau^{3/2}}\right)(W_\tau f-\tilde{W}_{\tau}f){d\tau}\right|\\
				&\le \frac{1}{2\sqrt{\pi}}\int_0^\infty \frac{ye^{-{\frac{y^2}{4 \tau}}}}{\tau^{3/2}}|\partial_\tau(W_\tau f-\tilde{W}_{\tau}f)|{d\tau}\\
				&\le C\,M_\alpha^{\LL}[f]\,\int_0^\infty \frac{ye^{-{\frac{y^2}{4 \tau}}}}{\tau^{3/2}}\tau^{-1+\alpha/2}{d\tau}\\
				&\le C\,M_\alpha^{\LL}[f]\,\left(\frac{1}{y^2}\int_0^{y^2} \frac{y^3}{\tau^{3/2}}e^{-\frac{y^2}{4\tau}} \tau^{-1+\alpha/2}d\tau+\int_{y^2}^\infty \tau^{-1+\alpha/2}\frac{d\tau}{\tau}\right)\\
				&				\le C M_\alpha^{\LL}[f] y^{-2+\alpha}.\\
				\end{align*}
				
				\end{proof}
				
	A consequence of the previous theorem is the following. 
	\begin{theorem} \label{identities3}	Let  $0<\alpha\le  2-n/q$  and $f$ be a function such that $M^P[f] < \infty$ and $\rho(\cdot)^{-\alpha} f(\cdot) \in L^\infty(\mathbb{R}^n).$ Then, $f\in \Lambda_{\alpha}^P$ 	if and only if  $f\in \Lambda_{\alpha}^{\tilde{P}}$.   
			
\end{theorem}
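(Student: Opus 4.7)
The plan is to leverage the second-derivative comparison estimate established in the previous theorem, namely
\[
\|\partial_y^2 P_y f - \partial_y^2 \tilde{P}_y f\|_\infty \le C\, M_\alpha^{\LL}[f]\, y^{-2+\alpha},
\]
together with the fact that, because $\alpha \le 2-n/q < 2$, the critical index $k=[\alpha]+1$ is always at most $2$. Thus the order $m=2$ sits above the minimal order in both definitions, and I can transfer information freely between $P_y$ and $\tilde{P}_y$ at second order; the only genuine issue is how to pass from an order-$2$ bound back to an order-$1$ bound when $0<\alpha<1$.

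For the direction $f\in\Lambda_\alpha^P \Rightarrow f\in\Lambda_\alpha^{\tilde{P}}$, I first apply Lemma \ref{cambiaryrhoP} with $j=2$ and $m=0$ (legitimate since $2\ge k$) to obtain $\|\partial_y^2 P_y f\|_\infty \le C\, S_\alpha^P[f]\, y^{-2+\alpha}$. Combining this with the comparison estimate and the hypothesis $M_\alpha^{\LL}[f] < \infty$ yields $\|\partial_y^2 \tilde{P}_y f\|_\infty \le C\, y^{-2+\alpha}$. When $k=2$ (that is, $1\le\alpha<2$) this is exactly the defining condition of $\Lambda_\alpha^{\tilde{P}}$. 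When $k=1$ (that is, $0<\alpha<1$), Remark \ref{goingtoinfinityP}(i) supplies $\partial_y \tilde{P}_y f(x) \to 0$ as $y\to\infty$, so I write
\[
\partial_y \tilde{P}_y f(x) = -\int_y^\infty \partial_z^2 \tilde{P}_z f(x)\, dz,
\]
and the convergent integral $\int_y^\infty z^{-2+\alpha}\, dz \sim y^{-1+\alpha}$ (finite precisely because $\alpha<1$) delivers the required first-derivative bound.

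The reverse direction $f\in\Lambda_\alpha^{\tilde{P}} \Rightarrow f\in\Lambda_\alpha^P$ proceeds symmetrically: Proposition \ref{subirelkP} upgrades the hypothesis to $\|\partial_y^2 \tilde{P}_y f\|_\infty \le C\, y^{-2+\alpha}$; the comparison estimate transfers this to $\|\partial_y^2 P_y f\|_\infty \le C\, y^{-2+\alpha}$; and when $k=1$, Proposition \ref{Poisoninfinito} (which uses the hypothesis $M^P[f]<\infty$) gives $\partial_y P_y f(x) \to 0$ as $y\to\infty$, justifying the same integration trick now on the Schr\"odinger--Poisson side.

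I expect the only subtle point to be this low-regularity regime $0<\alpha<1$, where the comparison estimate lives at order $2$ but the defining exponent is at order $1$; here both standing hypotheses $\rho(\cdot)^{-\alpha}f\in L^\infty$ and $M^P[f]<\infty$ do essential work, the first to invoke the comparison theorem and the second to ensure the decay at infinity needed to integrate $\partial_y^2$ back up to $\partial_y$. All remaining steps are immediate consequences of results already proved in the excerpt.
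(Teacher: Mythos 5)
Your proposal is correct and is precisely the argument the paper intends: the paper states Theorem \ref{identities3} as an immediate consequence of the second-order comparison estimate, and your write-up supplies exactly the missing details (transfer at order $2$ via the comparison theorem plus Lemma \ref{cambiaryrhoP} and Proposition \ref{subirelkP}, then integration from $\infty$ using the decay in Remark \ref{goingtoinfinityP} and Proposition \ref{Poisoninfinito} to come back down to order $k=1$ when $0<\alpha<1$). Nothing further is needed.
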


Finally it is easy to see that  Theorems  \ref{nuevoStein},  \ref{calor-poisson}, \ref{identities3} and \ref{nuevoStein2} have as a consequence that Theorem  \ref{identities4} is true.


\begin{thebibliography}{99}
				\bibitem{Ibraheem-Stinga} I. Abu-Falahah, P.R. Stinga and J. L. Torrea, {A note on the almost everywhere convergence to initial data
							for some evolution equations},
					{	\it Potential Anal.}, {\bf40} (2) (2014),
					 {195--202}.
						
					
					%		
					%		\bibitem{Bernardis} A. Bernardis, F. J. Mart\'in-Reyes, P. R. Stinga and J. L. Torrea,
					%		{Maximum principles, extension problem and inversion for nonlocal one-sided equations},
					%		\textit{J. Differential Equations}
					%		\textbf{260} (2016), 6333--6362.
					%		
					%		
					%		\bibitem{CaffaSilve} L. A. Caffarelli and L. Silvestre,
					%		{An extension problem related to the fractional Laplacian},
					%		\textit{Comm. Partial Differential Equations}
					%		\textbf{32} (2007), 1245--1260.
					%		
					%		
					%		
					%		\bibitem{CaffarelliPablo} L. A. Caffarelli and  P. R. Stinga, {Fractional elliptic equations, Caccioppoli estimates and regularity}, \textit{ Ann. Inst. H. Poincar\'e Anal. Non Lin\'eaire} \textbf{33} (2016), 767--807.
					%		
					%		
					%		\bibitem{Flett} T. M. Flett, {Temperatures, Bessel Potentials and Lipschitz Spaces}, \textit{ Proc. London Math. Soc.} Vol. {s3-22},  \textbf{3} (1971), 385--451.
					%		
					%		
					%		
					

	\bibitem{BonHar}B. Bongioanni, E. Harboure, O. Salinas, Weighted inequalities for negative powers of Schr\"odinger operators,
J. Math. Anal. Appl. {\bf 348 }(2008) 12--27.

			\bibitem{BonHar2}		B. Bongioanni, E. Harboure, O. Salinas, Riesz transforms related to Schr\"odinger operators acting on BMO type
			spaces, J. Math. Anal. Appl., {\bf 357} (2009) 115--131.

%					\bibitem{dLC-T} M. De Le\'on-Contreras and J. L. Torrea, { Fractional powers of the  parabolic Hermite operator. Regularity properties}, arXiv:1708.02788.
					
						\bibitem{DGMTZ} J. Dziuba\'nski, G. Garrig\'os, T. Mart\'\i nez, J. L. Torrea, and J. Zienkiewicz, $BMO$ spaces related to {S}chr\"odinger operators with
						potentials satisfying a reverse {H}\"older inequality, {\it Math. Z. } {\bf 249} (2005), 329--356.
						 \bibitem{DZ1} J. Dziuba\'nski and J. Zienkiewicz, $H^p$ spaces associated with Schr\"odinger operators, {\it Fourier Analysis and Related Topics}, Banach Center Publi. 56, Inst. Math., Polish Acad. Sci. (2002), 45--53.
					 \bibitem{DZ} J. Dziuba\'nski and J. Zienkiewicz, $H^p$ spaces associated with Schr\"odinger operators with potentials from reverse H\"older classes, {\it Colloquium Mathematicum}, \textbf{98 (1)} (2003), 5--37.
					
					\bibitem{Garrigos} G. Garrig\'os, S. Hartzstein, T. Signes, J. L. Torrea and B. Viviani,
					 Pointwise convergence to initial data of heat and {L}aplace
					 	equations,
					 {\it Trans. Amer. Math. Soc.}, {\bf 369} (9) (2016), 6575--6600.
					 
					 		\bibitem{Urbina} A. E. Gatto and W.O. Urbina,  {On Gaussian Lipschitz spaces and the boundedness of fractional integrals and fractional derivatives on them}, \textit{Quaest. Math.}  \textbf{38}  (2015),  1--25.
				
					\bibitem{Krantz}
					S. G.  Krantz,
					{Lipschitz spaces, smoothness of functions, and approximation
						theory}, \textit{Exposition. Math.}  \textbf{1(3)} (1983), 193--260.

\bibitem{Krylov book} N. V. Krylov,
\textit{Lectures on Elliptic and Parabolic Equations in H\"older Spaces},
Graduate Studies in Mathematics \textbf{12},
American Mathematical Society,
Providence, R.I., 1996.
\bibitem{Kurata} K. Kurata, An estimate on the heat kernel of magnetic Schr\"odinger operators and uniformly elliptic operators with non-negative potentials, {\it J. London Math. Soc.} {\bf62(2)} (2000), 885--903.
					
				\bibitem{Sjogren}  L.  Liu and P. Sj\"ogren,  {A characterization of the Gaussian Lipschitz space and sharp estimates for the Ornstein-Uhlenbeck Poisson kernel}, \textit{ Rev. Mat. Iberoam.}, \textbf{32},  (2016),   1189--1210.
					

					
					\bibitem{MSTZ} T. Ma, P. R. Stinga,  J. L. Torrea, and
					C. Zhang,  {Regularity properties of {S}chr\"odinger operators}, \textit{J. Math. Anal. Appl.},
					\textbf{388}, (2012),  817--837.
					
					
					\bibitem{Shen} Z. Shen, {$L^p$ estimates for Schr\"odinger operators with certain potentials}, \textit{Ann. Inst. Fourier} (Grenoble) \textbf{45} (1995), 513--546.
					
\bibitem{Silvestre} L. Silvestre, {Regularity of the Obstacle Problem for a Fractional Power
of the Laplace Operator}, \textit{ Comm. Pure Appl. Math.} \textbf{60} (2007), 67--112.				  
					
					\bibitem{Stein} E. M. Stein. \textit{Singular integrals and differentiability properties of functions}. Princeton Mathematical Series, No. 30.
					Princeton University Press, Princeton, N.J., 1970.
				
					
					
					
					
					\bibitem{ST} P. R. Stinga and  J.L.Torrea.
					{Extension problem and {H}arnack's inequality for some
						fractional operators}, \textit{ Comm. Partial Differential Equations}, \textbf{35}, (2010), 2092--2122. 
					
					
					
					
					
					\bibitem{ST2} P. R. Stinga and J.L. Torrea,
					{Regularity theory for the fractional harmonic oscillator}, \textit{J. Funct. Anal.},
					\textbf{260} (2011), 3097-3131.     
					
					\bibitem{ST3} P. R. Stinga and J.L. Torrea,
					{Regularity theory and extension problem for fractional nonlocal parabolic equations and the master equation.}, \textit{ SIAM J. Math. Anal.},
					\textbf{49} (2017), 5, 3893--3924.     
					
					
					
					\bibitem{Taibleson} M. Taibleson,  {On the theory of Lipschitz spaces of distributions  on Euclidean n-space, I,II,III} \textit{ J.Math. Mech.} \textbf{13} (1964), 407--479; \textbf{14} (1965), 821--839;
					\textbf{15} (1966), 973--981.
%					%		
%					%		
%					%		\bibitem{thangavelu2} S. Thangavelu, {Riesz transforms and the wave equation for the Hermite operator}, \textit{Comm.
%					%			Partial Diff. Eq.}, \textbf{15} (1990), 1199-1215.
%					
%					\bibitem{Thangavelu}  S.   Thangavelu.
%					\textit{Lectures on {H}ermite and {L}aguerre expansions}, volume 42 of {\it Mathematical Notes.} Princeton University Press, Princeton, NJ, 1993. With a preface by Robert S. Strichartz.
%					
%					
%					
					\bibitem{Zygmund} A. Zygmund.
					\textit{ Trigonometric series.} 2nd ed. {V}ols. {I}, {II}.
					Cambridge University Press, New York, 1959.
					
				\end{thebibliography}
	\end{document}